\numberwithin{equation}{section}
\newtheorem{proposition}{Proposition}[section]
\newtheorem{lemma}[proposition]{Lemma}
\newtheorem{theorem}[proposition]{Theorem}
\newtheorem{corollary}[proposition]{Corollary}
\theoremstyle{definition}
\newtheorem{remark}[proposition]{Remark}
\newtheorem{definition}[proposition]{Definition}
\newtheorem{example}[proposition]{Example}
\DeclareMathOperator{\GL}{GL}
\DeclareMathOperator{\Aut}{Aut}
\DeclareMathOperator{\Proj}{Proj}
\DeclareMathOperator{\Spec}{Spec}
\DeclareMathOperator{\ord}{ord}
\newcommand{\R}{\mathbb{R}}
\newcommand{\C}{\mathbb{C}}
\newcommand{\Z}{\mathbb{Z}}
\newcommand{\Q}{\mathbb{Q}}
\newcommand{\G}{\mathcal{G}}
\let\P\undefined
\newcommand{\P}{\mathbb{P}}
\renewcommand{\epsilon}{\varepsilon}
\newcommand{\M}{\mathcal{M}}
\let\O\undefined
\newcommand{\O}{\mathcal{O}}
\renewcommand{\L}{\mathcal{L}}
\newcommand{\X}{\mathcal{X}}
\newcommand{\Y}{\mathcal{Y}}
\renewcommand{\G}{\mathbb{G}}
\renewcommand{\phi}{\varphi}
\newcommand\cO{\mathcal{O}}
\newcommand\Hom{\mathrm{Hom}}
\title[Equivariant CM minimization for extremal manifolds]{Equivariant CM minimization for extremal manifolds}
\author[Gabriel Frey]{Gabriel Frey
}
\address{Gabriel Frey, Mathematics Institute, University of Warwick, Coventry CV4 7AL, United Kingdom}\email{gabriel.frey@warwick.ac.uk}
\begin{document}

\begin{abstract}

We prove an equivariant version of the CM minimization conjecture for extremal K\"ahler manifolds.
This involves proving that, given an equivariant punctured family of polarized varieties, a relative version of the CM degree is strictly minimized by an extremal filling.
This generalizes a result by Hattori for cscK manifolds with discrete automorphism group by allowing automorphisms and extremal metrics.
As a main tool, we extend results by Sz\'ekelyhidi on asymptotic filtration Chow stability of cscK manifolds with discrete automorphism group to the extremal setting.

\end{abstract}

\maketitle

\section{Introduction}

A central problem in complex and algebraic geometry is the construction of moduli spaces parameterizing projective varieties.
In our case of interest, we consider the moduli problem for polarized varieties; that is, projective varieties endowed with an ample line bundle.
The valuative criterion for separatedness ensures that a moduli space of a fixed class of polarized varieties is \textit{separated} provided that any family $(X,L) \to \Delta^*$ of polarized varieties of said class over the open punctured disc $\Delta^*$ in $\C$ has \textit{at most} one filling $(\mathcal{X},\mathcal{L}) \to \Delta$ to the whole open disk, up to isomorphism.

This class of problem may be approached numerically:
in a geometric invariant theory (GIT) setting, Wang--Xu \cite{WX12} proved that the semistable filling for family over a fixed base minimizes the GIT height, a numerical invariant introduced Wang \cite{wang2012height}.
It is known to experts that this can be improved to strict minimization for polystable fillings, giving a numerical proof of separatedness of GIT quotients.
Wang's work was motivated by Cornalba--Harris \cite{cornalba1988divisor}, Zhang \cite{zhang1996heights} and Xiao \cite{xiao1987fibered}, and raised the question of how to extend this theory to the setting of stability of varieties.

The analogue of GIT stability for polarized varieties is K-stability \cite{tian1997kahler,donaldson2002scalar}, where the numerical invariant generalizing Wang's GIT height is the degree of the Chow--Mumford (CM) line bundle.
In this case, the associated minimization problem is known as the \textit{CM minimization conjecture} for K-polystable fillings \cite{wang2012height,odaka2012on, WX12}.
The CM line bundle is a $\Q$-line bundle defined on the base of flat family of polarized varieties, originally introduced by Fujiki--Schumacher \cite{FS90} in their construction of the moduli space of constant scalar curvature K\"ahler (cscK) manifolds, and was further developed by Paul--Tian \cite{PT06}.

Hattori proved the following result on the degree of the CM line bundle $\mathrm{CM}(\mathcal{X},\mathcal{L})$ for cscK fillings $(\mathcal{X},\mathcal{L})$ over curves:
\begin{theorem}[{\cite[Theorem 1.2]{hattori2024minimizing}}]\label{thm : hattoris theorem}
    Let $C$ be a smooth projective curve with a special point $0$, and let $(\mathcal{X},\mathcal{L}) \to C$ and $(\mathcal{X}',\mathcal{L}') \to C$ be fillings of a flat family $(X,L) \to C \setminus \{ 0 \}$ such that the special fiber $(\mathcal{X}_0, \mathcal{L}_0)$ is smooth, admits a cscK metric and has discrete automorphism group.
    Then
    \begin{equation*}
        \mathrm{CM}(\mathcal{X},\mathcal{L})
        \leq 
        \mathrm{CM}(\mathcal{X}',\mathcal{L}'),
    \end{equation*}
    where the inequality is strict if and only if the fillings are not isomorphic.
\end{theorem}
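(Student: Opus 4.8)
The plan is to reduce the statement over $C$ to a local computation at $0$, to encode the discrepancy between the two fillings as a filtration of the section ring of the cscK fibre, and then to apply asymptotic (filtration) Chow stability of that fibre. Since the CM line bundle is built from the Deligne pairings of $\mathcal{L}$ and $K_{\mathcal{X}/C}$ through the Knudsen--Mumford expansion, and since $(\mathcal{X},\mathcal{L})$ and $(\mathcal{X}',\mathcal{L}')$ agree over $C\setminus\{0\}$, the difference $\mathrm{CM}(\mathcal{X}',\mathcal{L}')-\mathrm{CM}(\mathcal{X},\mathcal{L})$ depends only on the germs of the two fillings at $0$. Working over the DVR $R=\mathcal{O}_{C,0}$ with uniformiser $t$ and residue field $\C$, fix the generic section spaces $V_m:=H^0(\mathcal{X}_K,m\mathcal{L}_K)=H^0(\mathcal{X}'_K,m\mathcal{L}'_K)$ with $K=\mathrm{Frac}(R)$. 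Each filling furnishes a full-rank $R$-lattice, $\Lambda_m:=H^0(\mathcal{X},m\mathcal{L})$ resp.\ $\Lambda'_m$, inside $V_m$; these are commensurable and multiplicative in $m$. The elementary divisors of $\Lambda'_m$ relative to $\Lambda_m$ then define a multiplicative, linearly bounded $\Z$-filtration $\mathcal{F}$ of the section ring $R(\mathcal{X}_0,\mathcal{L}_0)=\bigoplus_m \Lambda_m/t\Lambda_m$, namely the filtration by $t$-adic order of a section relative to the second model. By construction $\mathcal{F}$ is trivial exactly when $\Lambda_m=\Lambda'_m$ for all $m$, which by relative ampleness and the $\mathrm{Proj}$ construction is equivalent to $(\mathcal{X},\mathcal{L})\cong(\mathcal{X}',\mathcal{L}')$; everything is thereby reduced to a statement about $\mathcal{F}$.

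The second step, which I expect to be the technical core, is to establish the identity
\[
\mathrm{CM}(\mathcal{X}',\mathcal{L}')-\mathrm{CM}(\mathcal{X},\mathcal{L}) = \mathrm{DF}(\mathcal{X}_0,\mathcal{L}_0;\mathcal{F}),
\]
where the right-hand side is the Donaldson--Futaki invariant of the filtration $\mathcal{F}$ (the limit of the suitably normalised Donaldson--Futaki invariants, equivalently $\mathrm{Chow}_m$-weights, of the truncations $\mathcal{F}_m$), together with a finite-level refinement relating the two sides at each $m\gg0$. To prove it I would pass to a common resolution $\mathcal{Z}\to\mathcal{X}$, $\mathcal{Z}\to\mathcal{X}'$ that is an isomorphism over $C\setminus\{0\}$, write the two pullbacks of $\mathcal{L}$ (resp.\ of $K_{\bullet/C}$) as differing by $\Q$-divisors supported on $\mathcal{Z}_0$, and expand the relevant Deligne pairings: in the standard formula of Fujiki--Schumacher and Paul--Tian for the CM degree over a curve, the coefficients are fibre-topological and hence identical for the two fillings, so the difference equals $\tfrac{\bar S}{n+1}(\mathcal{L}'^{\,n+1}-\mathcal{L}^{\,n+1})+(K_{\mathcal{X}'/C}\cdot\mathcal{L}'^{\,n}-K_{\mathcal{X}/C}\cdot\mathcal{L}^{\,n})$, an intersection number concentrated over $0$ on $\mathcal{Z}$, which one matches term by term with the asymptotic expansion of the weight of $\mathcal{F}$. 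This runs parallel to Wang's and Wang--Xu's \cite{WX12} comparison of GIT heights, now carried out for the CM line bundle.

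Finally I would invoke asymptotic filtration Chow stability of $(\mathcal{X}_0,\mathcal{L}_0)$: being smooth, cscK and of discrete automorphism group it is asymptotically Chow stable in Donaldson's sense, and by Sz\'ekelyhidi's extension to filtrations the $\mathrm{Chow}_m$-weight of $\mathcal{F}_m$ is $\geq 0$ for all $m\gg0$, strictly so unless $\mathcal{F}_m$ is trivial. Together with Step~2 this gives $\mathrm{CM}(\mathcal{X},\mathcal{L})\leq\mathrm{CM}(\mathcal{X}',\mathcal{L}')$; and if equality holds, then the $\mathrm{Chow}_m$-weights vanish for $m\gg0$, forcing $\mathcal{F}_m$ trivial, hence $\Lambda_m=\Lambda'_m$ for $m\gg0$ and therefore $(\mathcal{X},\mathcal{L})\cong(\mathcal{X}',\mathcal{L}')$ by Step~1, while the converse implication is immediate. \textbf{The main obstacle} is the matching in Step~2 between the curve-theoretic CM degree and a $\mathrm{Chow}_m$-weight that is already sign-definite at finite level: without the finite-level statement one recovers only the non-strict inequality from the limit and loses the equality characterisation, so the book-keeping of the Deligne pairings on the common model --- and checking that smoothness of $\mathcal{X}_0$ is exactly what the section-ring and stability arguments need --- is where the real work lies.
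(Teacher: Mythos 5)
Your overall architecture is the one Hattori's proof (recalled and generalized in this paper) actually follows: build the filtration $\mathcal{F}$ of the section ring of $(\mathcal{X}_0,\mathcal{L}_0)$ from the relative $t$-adic order of sections of the two fillings (Definition \ref{def : definition of Hattoris good filtration}), identify $\mathrm{CM}(\mathcal{X}',\mathcal{L}')-\mathrm{CM}(\mathcal{X},\mathcal{L})$ with $\mathrm{DF}(\mathcal{F})$ via the exact weight formula $w_\mathcal{F}(k)=-\dim\big(H^0(\mathcal{X},k\mathcal{L})/i(H^0(\mathcal{Y},k\mathcal{M}))\big)$ together with the Wang--Odaka-type expansion (Theorems \ref{thm : difference of CM degrees} and \ref{thm : hattoris good filtration properties}, Equation \eqref{eq : DF of Hattoris filtratoin = CM - CM}), and then invoke filtration Chow stability of the cscK fibre. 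One caveat already at this stage: the identity uses the Donaldson--Futaki invariant defined from the eventually polynomial weight function (this is exactly why \emph{good} filtrations are introduced); defining $\mathrm{DF}(\mathcal{F})$ as the limit of Chow weights of the truncations $\mathcal{F}_{(m)}$ is not known to agree with it, and the available comparison goes the wrong way for your purposes: $\mathrm{DF}(\mathcal{F})\geq\mathrm{Chow}_\infty(\mathcal{F})=\liminf_m\mathrm{Chow}_1(\mathcal{F}_{(m)})$ while $\mathrm{DF}(\mathcal{F})\leq\liminf_m\mathrm{DF}(\mathcal{F}_{(m)})$ (see the remark following Lemma \ref{lem : DF of twist}, related to the entropy approximation conjecture).

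The genuine gap is the equality characterisation, precisely the step you flag as the ``main obstacle''. From $\mathrm{CM}(\mathcal{X},\mathcal{L})=\mathrm{CM}(\mathcal{X}',\mathcal{L}')$, i.e.\ $\mathrm{DF}(\mathcal{F})=0$, the chain above only yields $\liminf_m\mathrm{Chow}_1(\mathcal{F}_{(m)})=0$; since each finite-level weight is merely nonnegative, nothing forces any $\mathrm{Chow}_1(\mathcal{F}_{(m)})$ to vanish, so ``$\mathcal{F}_{(m)}$ trivial for $m\gg0$, hence $\Lambda_m=\Lambda'_m$'' does not follow, and no sign-definite finite-level refinement of the CM identity is available to repair this. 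The actual argument runs the other way: one shows that if the fillings are \emph{not} isomorphic then $\|\mathcal{F}\|>0$ --- this is itself a nontrivial intersection-theoretic statement (Hattori's Corollary 3.9, reproved here in Theorem \ref{thm : hattoris good filtration properties} via Deligne pairings and Boucksom--Eriksson), not something true ``by construction'', since what stability can ever give you back in the equality case is only $\|\mathcal{F}\|=0$, i.e.\ almost-everywhere constancy of the concave transform, and bridging that to $\Lambda_m=\Lambda'_m$ is real work --- and then one uses Sz\'ekelyhidi's theorem that $\|\mathcal{F}\|>0$ implies the \emph{strict} positivity $\mathrm{Chow}_\infty(\mathcal{F})>0$, proved by a Stoppa-type blowup at a destabilising point (the mechanism extended in Section 3 here), not by levelwise Chow stability of $(\mathcal{X}_0,m\mathcal{L}_0)$ alone. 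Without these two ingredients (the norm-versus-isomorphism equivalence and the strict asymptotic positivity under $\|\mathcal{F}\|>0$), your proposal establishes only the non-strict inequality.
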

The existence of cscK metrics is closely related to the stability of polarized manifolds, through the Yau--Tian--Donaldson conjecture \cite{yau1992open,tian1997kahler,donaldson2002scalar}, which predicts that a polarized manifold admits a cscK metric if and only if it is K-polystable.
Thus Theorem \ref{thm : hattoris theorem} may be viewed as a proof of the CM minimization conjecture, with an analytic stability hypothesis on the special fiber, under a further discrete automorphism group assumption.
The CM minimization conjecture was solved completely algebro-geometrically in the Fano case, by Blum--Xu \cite[Section 3]{blum2019uniqueness} and Xu \cite[Section 9.4]{xu2021k}.

As we shall see in our work, one may replace the projective curve with the formal disk as the base, in line with the valuative criterion for separatedness.
Since the disk is not proper, the degree of the CM line bundle for a model is not well-defined, but one may identify the difference of their degrees as a rational number.
Theorem \ref{thm : hattoris theorem} implies separatedness of the moduli space of polarized cscK manifolds with discrete automorphism group, a result originally proven by Fujiki--Schumacher \cite{FS90}, and which also follows from Donaldson's result \cite{donaldson2001scalar} on asymptotic Chow stability of cscK manifolds with discrete automorphism group, using GIT.


\subsection{Main results}

Let $R$ be a discrete valuation ring (DVR) with fraction field $K$, whose residue field is the field of complex numbers $\C$.
The spectrum $\Spec R$ consists of a closed point $0$ and a generic point $\Spec K$.
The guiding example is the ring of formal power series $R = \C[\![ t ]\!]$, whose spectrum is called the \textit{formal disk} and $K = \C(\!( t )\!)$ is the field of formal Laurent series.

Let $(X,L)$ be a polarized $K$-variety, which we think of as a degeneration of a polarized variety over $\Spec K$, and choose a maximal torus $T$ in the automorphism group $\mathrm{Aut}_K(X,L)$.
The main result of the paper is the following generalization of Theorem \ref{thm : hattoris theorem}, allowing automorphisms and extremal metrics (i.e. K\"ahler metrics $\omega$ satisfying $\overline{\partial} \, \mathrm{grad}^{1,0}_\omega \mathrm{scal}(\omega) = 0$, see \cite{calabi1982extremal}):
\begin{theorem}\label{thm : main theorem}
    Let $(\mathcal{X},\mathcal{L})$ and $(\mathcal{X}',\mathcal{L}')$ be $T$-equivariant models of $(X,L)$ such that the restricted torus $T_0$ in $\mathrm{Aut}(\mathcal{X}_0, \mathcal{L}_0)$ is maximal. If the special fiber $(\mathcal{X}_0, \mathcal{L}_0)$ is smooth and admits an extremal metric, then there exists a rational one-parameter subgroup $\xi$ of $T$ such that
    \begin{equation*}
        \mathrm{CM}(\mathcal{X}_\xi,\mathcal{L}_\xi) \leq 
        \mathrm{CM}(\mathcal{X}',\mathcal{L}'),
    \end{equation*}
    where the inequality is strict if and only if for all one-parameter subgroups $\beta$ of $T$, the twisted model $(\mathcal{X}_\beta,\mathcal{L}_\beta)$ is not isomorphic to $(\mathcal{X}',\mathcal{L}')$.
\end{theorem}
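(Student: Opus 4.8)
The plan is to reduce the comparison $\mathrm{CM}(\mathcal{X}_\xi,\mathcal{L}_\xi)\leq \mathrm{CM}(\mathcal{X}',\mathcal{L}')$ to an equivariant version of the Wang--Xu height minimization via a finite-dimensional GIT picture, following the strategy behind Theorem \ref{thm : hattoris theorem} but incorporating the torus action. First I would fix a large integer $m$ and embed the fibers into a fixed projective space $\bbp^{N}$ using $H^0(\mathcal{X}_0,\mathcal{L}_0^{\otimes m})$, making the construction $T$-equivariant so that $T$ acts on $\bbp^N$ through a fixed maximal torus of $\GL_{N+1}$. The central input, supplied by the extension of Sz\'ekelyhidi's asymptotic filtration Chow stability to the extremal setting advertised in the abstract, is that for $m\gg 0$ the special fiber $(\mathcal{X}_0,\mathcal{L}_0)$ is Chow polystable \emph{relative to} the maximal torus $T_0$; concretely, the Chow point of $\mathcal{X}_0$ is polystable for the action of the reductive centralizer of $T_0$ (or equivalently, extremal-modified K-polystability translates to relative GIT polystability of the $m$-th Chow point). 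The one-parameter subgroup $\xi$ in the statement is precisely the ``extremal'' cocharacter: it is the (rational) one-parameter subgroup of $T$ produced by the extremal vector field, whose associated degeneration $(\mathcal{X}_\xi,\mathcal{L}_\xi)$ is the filling whose central fiber carries the extremal metric in canonically balanced position.

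The key steps, in order, are: (1) set up the $T$-equivariant $m$-th Hilbert/Chow embedding and identify $\mathrm{CM}(\mathcal{X},\mathcal{L})$ asymptotically (in $m$) with the Wang GIT height of the corresponding arc in the Chow variety, up to the standard $O(m^{n})$ error terms and the $\chi$-normalization; this is where one uses that the CM line bundle is, up to lower-order corrections, the descent of the Chow line bundle (Paul--Tian, and the version used by Hattori). (2) Apply the \emph{relative} Wang--Xu minimization: among all $T_0$-equivariant (equivalently, centralizer-equivariant) fillings, the polystable one minimizes the height, with equality iff the fillings are isomorphic under the centralizer group. Since $(\mathcal{X}_\xi,\mathcal{L}_\xi)_0$ is exactly the relatively polystable fiber, its height is $\leq$ that of $(\mathcal{X}',\mathcal{L}')$. (3) Translate the height inequality back to a CM inequality by letting $m\to\infty$, using that the error terms are independent of the filling (they depend only on the generic fiber $(X,L)$ and on $n$, $\chi$), so the asymptotic comparison is in fact an exact comparison of the $\Q$-valued CM differences on $\Spec R$. (4) For the equality case, run the polystable-orbit argument: $\mathrm{CM}(\mathcal{X}_\xi,\mathcal{L}_\xi)=\mathrm{CM}(\mathcal{X}',\mathcal{L}')$ forces the Chow points to lie in the same closed centralizer-orbit for all large $m$, which by a standard limit/boundedness argument forces $(\mathcal{X}',\mathcal{L}')\cong(\mathcal{X}_\beta,\mathcal{L}_\beta)$ for some one-parameter subgroup $\beta$ of $T$ conjugating the embeddings; conversely such an isomorphism gives equality since twisting by a cocharacter of $T$ does not change the CM degree difference.

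The main obstacle I expect is Step (2) combined with the passage to the limit in Steps (1) and (3): making the relative GIT minimization uniform in $m$ and controlling that the reductivity of the relevant centralizer (which requires $T_0$ to be maximal in $\mathrm{Aut}(\mathcal{X}_0,\mathcal{L}_0)$ — exactly the hypothesis in the theorem) persists at the level of $m$-th syzygies, and then checking that the lower-order terms in the $\mathrm{CM}$-vs-height expansion genuinely cancel in the \emph{difference} of two fillings over $\Spec R$. The subtlety particular to the extremal (as opposed to cscK) case is that one is minimizing not the naive height but a \emph{modified} height twisted by the extremal character, so one must verify that this modification is precisely absorbed by replacing $(\mathcal{X},\mathcal{L})$ with $(\mathcal{X}_\xi,\mathcal{L}_\xi)$ — i.e. that the extremal one-parameter subgroup $\xi$ is the unique minimizer direction within $T$, which is where the relative/extremal Chow stability result from the extension of Sz\'ekelyhidi's theorem does the real work. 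Handling automorphisms beyond $T$ requires only that $T_0$ be maximal, so the centralizer argument closes up; no further properness of $\mathrm{Aut}$ is needed.
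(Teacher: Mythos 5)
Your route (level-$m$ Chow/Hilbert embeddings, identification of CM differences with Wang-type GIT heights, a ``relative Wang--Xu'' minimization at each level, then $m\to\infty$) is genuinely different from the paper, which never passes through finite-dimensional GIT of the models: the paper builds Hattori's good filtration $\mathcal{F}=\mathcal{F}_{(\mathcal{X},\mathcal{L}),(\mathcal{X}',\mathcal{L}')}$ on the central fiber, proves the exact identity $\mathrm{DF}_{T_0}(\mathcal{F})=\mathrm{CM}(\mathcal{X}',\mathcal{L}')-\mathrm{CM}(\mathcal{X}_\xi,\mathcal{L}_\xi)$ via the DVR version of the Wang--Odaka formula (Theorem \ref{thm : difference of CM degrees}, Theorem \ref{thm : hattoris good filtration properties}, Proposition \ref{prop : twisting hattoris filtratoin}), and then invokes a new stability theorem for $T$-invariant good filtrations on extremal manifolds (Theorem \ref{thm : realtive good filtration K stab of extr}). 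The central gap in your proposal is Step (3): the claim that the error terms in the ``CM versus height'' comparison depend only on the generic fiber, so that the level-$m$ inequalities become an exact CM inequality in the limit, is unjustified and is precisely the known hard point. The discrepancy between the level-$m$ Chow weight and the CM degree is governed by subleading coefficients of the weight polynomial of the particular filling/filtration, and these are not controlled uniformly; this is why the paper only obtains the one-sided bound $\mathrm{DF}_T(\mathcal{F})\geq \mathrm{Chow}_{\infty,T}(\mathcal{F})$ (Proposition \ref{prop: DFT geq ChowinfT}), with the question of equality tied to the non-Archimedean entropy approximation problem. In other words, asymptotic (relative) Chow stability does not by itself yield CM minimization without exactly the filtration-level bookkeeping you are assuming away; moreover the ``relative Wang--Xu minimization'' you invoke at each level $m$ is itself not available and would have to be proved.

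The equality case is a second genuine gap: equality of the $\Q$-valued CM differences over $\Spec R$ gives no equality of heights at any finite $m$, so the closed-orbit/polystability argument of Step (4) never gets started; a strict inequality at every finite level can perfectly well degenerate to equality in the limit. The paper instead characterizes strictness through positivity of the reduced norm $\|\mathcal{F}\|_{T_0}$, proved by an intersection-theoretic argument (Deligne pairings, Boucksom--Eriksson) in Theorem \ref{thm : hattoris good filtration properties} together with Proposition \ref{prop : red norm of hattoris filtration}. Finally, your identification of $\xi$ as ``the extremal cocharacter'' is off: twisting does not change the scheme $\mathcal{X}$ or its central fiber, and in the paper $\xi=-\xi(\mathcal{F})$ is the orthogonal projection of the good filtration onto $N_\Q(T)$, so it depends on the pair of models, not merely on the extremal vector field of $(\mathcal{X}_0,\mathcal{L}_0)$ (Remark \ref{rmk : xi F is explicit}). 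Also note that the Chow-theoretic input actually used in the paper is Seyyedali's relative asymptotic Chow semistability of blowups of extremal manifolds, deployed in a destabilization argument at a $T$-invariant point, not relative Chow polystability of the central fiber itself at each level $m$.
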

The rational one-parameter subgroup $\xi$ may be given explicitly in terms coefficients of Hilbert polynomials (see Remark \ref{rmk : xi F is explicit}).
Analytically, extremal metrics arise as critical points of the Calabi functional and as fixed points of the Calabi flow up to biholomorphisms, and are a natural generalization of cscK metrics when automorphisms are involved (see \cite[Section 4]{szekelyhidi2014introduction}).
Algebraically, their existence is conjectured to be equivalent to relative K-stability \cite{szekelyhidi2007extremal}, and they are relevant in the understanding of instability stratifications of the stack of polarized varieties.

In the special case of cscK metrics, one obtains the following:
\begin{corollary}\label{cor : main corollary}
        Let $(\mathcal{X},\mathcal{L})$ and $(\mathcal{X}',\mathcal{L}')$ be $T$-equivariant models of $(X,L)$ such that the restricted torus $T_0$ in $\mathrm{Aut}(\mathcal{X}_0, \mathcal{L}_0)$ is maximal. If the special fiber $(\mathcal{X}_0, \mathcal{L}_0)$ is smooth and admits a cscK metric, then
    \begin{equation*}
        \mathrm{CM}(\mathcal{X},\mathcal{L})
        \leq 
        \mathrm{CM}(\mathcal{X}',\mathcal{L}'),
    \end{equation*}
    where the inequality is strict if and only if for all one-parameter subgroups $\beta$ of $T$, the twisted model $(\mathcal{X}_\beta,\mathcal{L}_\beta)$ is not isomorphic to $(\mathcal{X}',\mathcal{L}')$.
\end{corollary}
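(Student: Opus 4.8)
The plan is to deduce Corollary \ref{cor : main corollary} directly from Theorem \ref{thm : main theorem}. The only point requiring attention is that when the special fiber carries a cscK metric, rather than a merely extremal one, the distinguished rational one-parameter subgroup $\xi$ produced by Theorem \ref{thm : main theorem} is trivial, so that the twisted model $(\mathcal{X}_\xi,\mathcal{L}_\xi)$ coincides with $(\mathcal{X},\mathcal{L})$ itself and the inequality, together with its equality characterization, transfers verbatim.

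First I would record that a cscK metric is in particular an extremal metric: if $\mathrm{scal}(\omega)$ is constant then $\mathrm{grad}^{1,0}_\omega\mathrm{scal}(\omega)=0$, hence $\overline{\partial}\,\mathrm{grad}^{1,0}_\omega\mathrm{scal}(\omega)=0$. Thus all hypotheses of Theorem \ref{thm : main theorem} hold as stated — $(\mathcal{X},\mathcal{L})$ and $(\mathcal{X}',\mathcal{L}')$ are $T$-equivariant models of $(X,L)$, the restricted torus $T_0\subset\mathrm{Aut}(\mathcal{X}_0,\mathcal{L}_0)$ is maximal, and $(\mathcal{X}_0,\mathcal{L}_0)$ is smooth and admits an extremal metric — so the theorem supplies a rational one-parameter subgroup $\xi$ of $T$ with $\mathrm{CM}(\mathcal{X}_\xi,\mathcal{L}_\xi)\leq\mathrm{CM}(\mathcal{X}',\mathcal{L}')$, the inequality being strict unless $(\mathcal{X}',\mathcal{L}')$ is isomorphic to some twist $(\mathcal{X}_\beta,\mathcal{L}_\beta)$.

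Next I would pin down $\xi$ using Remark \ref{rmk : xi F is explicit}, which expresses $\xi$ explicitly through the leading coefficients of the (equivariant) Hilbert polynomials of $(\mathcal{X}_0,\mathcal{L}_0)$ for the $T_0$-action: concretely $\xi$ is a positive rational multiple of the extremal one-parameter subgroup of $(\mathcal{X}_0,\mathcal{L}_0)$, i.e.\ the algebraic incarnation of the extremal vector field, the element of $\mft_0$ dual, via the $L^2$/Futaki pairing on $\mft_0$, to the Futaki character of $(\mathcal{X}_0,\mathcal{L}_0)$ restricted to $\mft_0$. Since $(\mathcal{X}_0,\mathcal{L}_0)$ admits a cscK metric, its Futaki character vanishes identically — computing the Futaki functional with the cscK representative gives $\int(\mathrm{scal}-\overline{\mathrm{scal}})$ times a holomorphy potential, which is $0$ — so the extremal vector field is zero and $\xi$ is the trivial one-parameter subgroup. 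Hence $(\mathcal{X}_\xi,\mathcal{L}_\xi)=(\mathcal{X},\mathcal{L})$, and substituting this into the conclusion of Theorem \ref{thm : main theorem} yields precisely the inequality $\mathrm{CM}(\mathcal{X},\mathcal{L})\leq\mathrm{CM}(\mathcal{X}',\mathcal{L}')$ of the corollary, with strictness exactly when no twist $(\mathcal{X}_\beta,\mathcal{L}_\beta)$ is isomorphic to $(\mathcal{X}',\mathcal{L}')$.

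The only genuine work is the identification of $\xi$ with the extremal one-parameter subgroup of the central fiber and the verification that this subgroup is trivial in the cscK case; this requires unwinding the explicit formula of Remark \ref{rmk : xi F is explicit} and invoking the classical fact that existence of a cscK metric forces vanishing of the Futaki character on the maximal torus. Everything beyond that is a direct substitution, so I expect no further obstacle.
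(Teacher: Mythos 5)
There is a genuine gap, and it is located exactly where you say the ``only genuine work'' lies: the identification of $\xi$ with the extremal one-parameter subgroup of the central fiber is false. The $\xi$ of Theorem \ref{thm : main theorem} (made explicit in Remark \ref{rmk : xi F is explicit}) is $-\xi(\mathcal{F}) = -\sum_i \langle \mathcal{F},\beta_i\rangle/\langle\beta_i,\beta_i\rangle\,\beta_i$, the orthogonal projection onto $N_\Q(T)$ of the good filtration $\mathcal{F}=\mathcal{F}_{(\mathcal{X},\mathcal{L}),(\mathcal{X}',\mathcal{L}')}$ associated to the \emph{pair of models}; it is not an invariant of $(\mathcal{X}_0,\mathcal{L}_0)$ alone and has nothing to do with the dual of the Futaki character. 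In particular it does not vanish in the cscK case: if for instance $(\mathcal{X}',\mathcal{L}')=(\mathcal{X}_\beta,\mathcal{L}_\beta)$ for a nontrivial $\beta$ in $T$, the associated good filtration is (a shift of) the product-type filtration of $\beta_0$, whose projection to $N_\Q(T_0)$ is $\beta_0$ itself, so $\xi\neq 0$ even though the central fiber is cscK. So your substitution $(\mathcal{X}_\xi,\mathcal{L}_\xi)=(\mathcal{X},\mathcal{L})$ is unjustified, and the corollary does not follow from the bare statement of Theorem \ref{thm : main theorem} by the argument you give.

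What the cscK hypothesis actually kills is not $\xi$ but the Futaki invariants $\mathrm{Fut}(\beta_i)$. The paper exploits this by bypassing the twist entirely: since $\mathrm{DF}_{T_0}(\mathcal{F})=\mathrm{DF}(\mathcal{F})-\sum_i \langle\mathcal{F},\beta_i\rangle/\langle\beta_i,\beta_i\rangle\,\mathrm{Fut}(\beta_i)$ and all $\mathrm{Fut}(\beta_i)=0$, one has $\mathrm{DF}(\mathcal{F})=\mathrm{DF}_{T_0}(\mathcal{F})\geq 0$ (Corollary \ref{cor : good filtration k polystab of csck}, i.e.\ Theorem \ref{thm : realtive good filtration K stab of extr} in the cscK case), and Equation \eqref{eq : DF of Hattoris filtratoin = CM - CM} identifies $\mathrm{DF}(\mathcal{F})$ with $\mathrm{CM}(\mathcal{X}',\mathcal{L}')-\mathrm{CM}(\mathcal{X},\mathcal{L})$ directly, with strictness controlled by $\|\mathcal{F}\|_{T_0}>0$ via Proposition \ref{prop : red norm of hattoris filtration}. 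If you insist on deducing the corollary from the extremal statement, the correct repair is not ``$\xi=0$'' but ``twisting does not change the CM degree in the cscK case'': by Lemma \ref{lem : DF of twist}, Proposition \ref{prop : twisting hattoris filtratoin} and Equation \eqref{eq : DF of Hattoris filtratoin = CM - CM}, $\mathrm{CM}(\mathcal{X},\mathcal{L})-\mathrm{CM}(\mathcal{X}_\xi,\mathcal{L}_\xi)=\mathrm{DF}(\mathcal{F}_\xi)-\mathrm{DF}(\mathcal{F})=\mathrm{Fut}(\xi_0)=0$. Note that this repair uses the internal machinery of Section 4 (not just the statement of Theorem \ref{thm : main theorem}), and at that point you have essentially reproduced the paper's own argument.
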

Note that even in the case of discrete automorphisms, Corollary \ref{cor : main corollary} is a generalization of Theorem \ref{thm : hattoris theorem}, as we allow models over DVRs rather than over proper curves.
For this, we use a definition of the difference of CM degrees through the Knudsen--Mumford expansion, which we show recovers the definition used by Hattori when the models over $\Spec R$ are the restriction of models over proper curves.
Our proof in this generality then involves proving a generalization of the Wang--Odaka intersection-theoretic formula for the Donaldson--Futaki invariant involved in K-stability \cite{odaka2013generalization,wang2012height} to pairs of models over DVRs (Theorem \ref{thm : difference of CM degrees}).

Theorem \ref{thm : main theorem} proves separatedness of the moduli space of extremal manifolds with fixed maximal torus, a result that may also be obtained using Seyyedali's proof of relative asymptotic Chow stability of extremal manifolds \cite{seyyedali2017relative} (see also \cite{Mab18,ST21,Has21}); in fact, we will use this in order to prove Theorem \ref{thm : main theorem}.
Both Theorem \ref{thm : main theorem} and Corollary \ref{cor : main corollary} may be viewed as generalizations of results on relative K-(poly)stability of extremal and cscK manifolds due to Stoppa \cite{stoppa2009k} and Stoppa--Sz\'ekelyhidi \cite{stoppa2011relative} to families of polarized varieties.

We also obtain relative arc K-polystability of extremal manifolds:
\begin{corollary}\label{cor : arc corollary}
    Let $(Y,H)$ be an extremal polarized manifold and let $T$ be a maximal complex torus in $\Aut (Y,H)$. Then for all $T$-equivariant arcs $(\mathcal{Y},\mathcal{H})$ for $(Y,H)$, we have
    \begin{equation*}
        \mathrm{DF}_T(\mathcal{Y},\mathcal{H}) \geq 0,
    \end{equation*}
    where the inequality is strict if and only if for all one-parameter subgroups $\beta$ of $T$, the twisted arc $(\mathcal{Y}_\beta,\mathcal{H}_\beta)$ is not trivial.
\end{corollary}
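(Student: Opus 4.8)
The plan is to derive Corollary~\ref{cor : arc corollary} from Theorem~\ref{thm : main theorem} by comparing an arbitrary $T$-equivariant arc with the \emph{trivial} arc. Given an extremal polarized manifold $(Y,H)$ with maximal complex torus $T \subset \Aut(Y,H)$, set $(X,L) := (Y,H) \otimes_\C K$ and let $(\mathcal{Y}^{\triv},\mathcal{H}^{\triv}) := (Y,H) \times_{\Spec \C} \Spec R$ denote the trivial $T$-equivariant arc, with $T$ acting fibrewise through $T \subset \Aut(Y,H)$. Its generic fiber is $(X,L)$ and its special fiber is $(Y,H)$ itself, which is smooth, admits an extremal metric, and whose restricted torus $T_0$ equals $T$ and is therefore maximal; hence the hypotheses of Theorem~\ref{thm : main theorem} are met for $(\mathcal{X},\mathcal{L}) := (\mathcal{Y}^{\triv},\mathcal{H}^{\triv})$ and $(\mathcal{X}',\mathcal{L}') := (\mathcal{Y},\mathcal{H})$.

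Applying Theorem~\ref{thm : main theorem}, I obtain a rational one-parameter subgroup $\xi$ of $T$ with
\[
\mathrm{CM}\big((\mathcal{Y}^{\triv})_\xi,(\mathcal{H}^{\triv})_\xi\big) \;\leq\; \mathrm{CM}(\mathcal{Y},\mathcal{H}).
\]
The key point is that twisting the trivial arc by a (rational) one-parameter subgroup of $T$ yields a $T$-equivariant arc isomorphic to the trivial one, because any such subgroup extends to an automorphism of the total space of the family $(Y,H) \times \Spec R$ identifying the twisted model with the untwisted one. Hence the left-hand side equals $\mathrm{CM}(\mathcal{Y}^{\triv},\mathcal{H}^{\triv})$, and we get
\[
\mathrm{CM}(\mathcal{Y},\mathcal{H}) - \mathrm{CM}(\mathcal{Y}^{\triv},\mathcal{H}^{\triv}) \;\geq\; 0.
\]

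Next I would feed this into the generalized Wang--Odaka formula of Theorem~\ref{thm : difference of CM degrees}: applied to the two $T$-equivariant models $(\mathcal{Y},\mathcal{H})$ and $(\mathcal{Y}^{\triv},\mathcal{H}^{\triv})$ of $(X,L)$, the difference of their CM degrees is, up to positive normalization, the relative Donaldson--Futaki invariant $\mathrm{DF}_T(\mathcal{Y},\mathcal{H})$ of the arc, since the second model is trivial. This yields $\mathrm{DF}_T(\mathcal{Y},\mathcal{H}) \geq 0$.

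For the equality statement I would transfer the criterion from Theorem~\ref{thm : main theorem}: the displayed inequality is strict if and only if $((\mathcal{Y}^{\triv})_\beta,(\mathcal{H}^{\triv})_\beta) \not\cong (\mathcal{Y},\mathcal{H})$ for every one-parameter subgroup $\beta$ of $T$; as each twisted trivial arc is trivial, this is precisely the condition that $(\mathcal{Y},\mathcal{H})$ is not trivial. Since twisting is invertible and twists of the trivial arc are trivial, the latter is equivalent to requiring $(\mathcal{Y}_\beta,\mathcal{H}_\beta)$ to be non-trivial for all one-parameter subgroups $\beta$ of $T$, which is the form stated. The main obstacle is the third step: matching conventions so that the difference of the relative CM degrees in Theorem~\ref{thm : main theorem} coincides, up to a positive factor, with $\mathrm{DF}_T$ in the case where one model is the trivial arc, together with the routine checks that the trivial arc is an admissible $T$-equivariant model and that twisting it by $T$ returns it up to $T$-equivariant isomorphism.
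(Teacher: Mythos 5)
There is a genuine gap, and it sits at the step you call the ``key point.'' It is not true that twisting the trivial arc $(Y,H)\times\Spec R$ by a one-parameter subgroup $\beta$ of $T$ yields a model isomorphic to the trivial one. By Definition \ref{def : model}, an isomorphism of models must extend, over all of $\Spec R$, the composed identification of the generic fibers, which for the trivial arc versus its twist is exactly $\beta(t)$. The point $\beta(t)$ lies in $T(K)$ but not in $T(R)$ for $\beta\neq 0$ (e.g.\ for $T=\G_m^d$ it is $(t^{a_1},\dots,t^{a_d})$ with $t$ a uniformizer), so it does not extend to an automorphism of the total space over $\Spec R$: the twisted trivial arc is precisely a product arc, i.e.\ the model induced by the product test configuration of Example \ref{exp : models generalize test configurations}, and its difference of CM degrees from the trivial arc is the Futaki invariant $\mathrm{Fut}(\beta)$, which is generally nonzero for an extremal, non-cscK manifold. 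A quick sanity check that the step cannot be repaired: if your claim held, your argument would give $\mathrm{CM}(\mathcal{Y},\mathcal{H})-\mathrm{CM}(\mathcal{Y}^{\triv},\mathcal{H}^{\triv})\geq 0$, i.e.\ nonnegativity of the \emph{absolute} Donaldson--Futaki invariant of every equivariant arc, which is false for extremal non-cscK manifolds (take a product arc with $\mathrm{Fut}(\beta)<0$). The same false claim also breaks your treatment of the strictness criterion, since ``the arc is not trivial'' is strictly weaker than ``all twists of the arc are not trivial'' (again a product arc is a counterexample), which is exactly why the corollary is phrased with twists.

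The correct deduction keeps the twist produced by Theorem \ref{thm : main theorem} rather than removing it. Applying the theorem with $(\mathcal{X},\mathcal{L})$ the trivial arc and $(\mathcal{X}',\mathcal{L}')=(\mathcal{Y},\mathcal{H})$ (your setup for this part is fine, and the hypotheses are indeed satisfied) gives a rational $\xi$ with $\mathrm{CM}\big((\mathcal{Y}^{\triv})_\xi,(\mathcal{H}^{\triv})_\xi\big)\leq\mathrm{CM}(\mathcal{Y},\mathcal{H})$. Since twisting \emph{both} models by the same automorphism of $(X,L)$ does not change the difference of CM degrees, this is the same as $\mathrm{CM}(\mathcal{Y}_{-\xi},\mathcal{H}_{-\xi})-\mathrm{CM}(\mathcal{Y}^{\triv},\mathcal{H}^{\triv})\geq 0$, and it is this quantity --- the difference of CM degrees between a twist of the arc and the trivial arc --- that the paper takes as the definition of $\mathrm{DF}_T(\mathcal{Y},\mathcal{H})$; it differs from $\mathrm{CM}(\mathcal{Y},\mathcal{H})-\mathrm{CM}(\mathcal{Y}^{\triv},\mathcal{H}^{\triv})$ by Futaki-type correction terms, contrary to your third step. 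Likewise, the strictness criterion from Theorem \ref{thm : main theorem} transfers verbatim: the inequality is strict if and only if $(\mathcal{Y}^{\triv})_\beta\not\cong(\mathcal{Y},\mathcal{H})$ for all $\beta$, equivalently $(\mathcal{Y}_\beta,\mathcal{H}_\beta)$ is not trivial for all $\beta$, with no reduction to the single condition that $(\mathcal{Y},\mathcal{H})$ itself be nontrivial.
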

This extends arc K-polystability of cscK manifolds to the extremal setting, proven by Dervan--Reboulet \cite[Corollary 1.2]{dervan2024arcs} (see also \cite{szekelyhidi2015filtrations,donaldson2012stability} in the absence of automorphisms).
The relative Donaldson--Futaki invariant for arcs used in Corollary \ref{cor : arc corollary} may be defined as the difference of CM degrees between a twist of the arc and the trivial arc, so that Corollary \ref{cor : arc corollary} follows directly from Theorem \ref{thm : main theorem}.

\subsection{Strategy of the proof}

We briefly outline the strategy of proving Theorem \ref{thm : main theorem}, which is divided into two steps:
the first step is proving that, given two equivariant models $(\mathcal{X},\mathcal{L})$ and $(\mathcal{X}',\mathcal{L}')$, there exists a torus-invariant good filtration $\mathcal{F}$ of $(\mathcal{X}_0,\mathcal{L}_0)$, such that
\begin{equation}\label{eq : step 1}
    \mathrm{DF}_{T_0} (\mathcal{F})
    = \mathrm{CM}(\mathcal{X}',\mathcal{L}')
    - \mathrm{CM}(\mathcal{X}_\xi,\mathcal{L}_\xi),
\end{equation}
for some rational one-parameter subgroup $\xi$ of $T$.
This filtration was constructed by Hattori \cite[Theorem 3.5]{hattori2024minimizing} in the non-equivariant case and generalizes a construction by Blum--Xu \cite{blum2019uniqueness} for degenerations of Fano varieties.
The left-hand side of Equation \eqref{eq : step 1} is the relative Donaldson--Futaki invariant for invariant good filtrations, generalizing Hattori's Donaldson--Futaki invariant for good filtrations, and the right-hand side is difference CM degrees of a pair of models over a DVR, both of which we will introduce in the present paper.

The second step is proving a new stability result involving the left-hand side of the Equation \eqref{eq : step 1} in the case where $(\mathcal{X}_0,\mathcal{L}_0)$ is smooth and extremal.
More precisely, we prove that for all torus-invariant good filtrations $\mathcal{F}$, we have $\mathrm{DF}_{T_0} (\mathcal{F}) \geq 0$, where the inequality is strict whenever the reduced norm $||\mathcal{F}||_{T_0}$ is strictly positive.
This generalizes works by Sz\'ekelyhidi \cite{szekelyhidi2015filtrations}, Hattori \cite{hattori2024minimizing} and Stoppa--Sz\'ekelyhidi \cite{stoppa2011relative} for extremal and cscK manifolds with automorphisms.

\subsection{Structure of the paper}\label{subsec : structure of the paper}

This paper is divided into three sections.
In the second section, we recall the necessary prerequisites on models, CM degrees and filtrations.
Notably, we introduce the notion of the difference of CM degrees of a pair of models over a DVR and prove a generalization of the Wang--Odaka intersection-theoretic formula in this setting. 

In the third section, we establish the second step of the strategy, proving that an extremal polarized manifold is relatively K-polystable with respect to good filtrations (Theorem \ref{thm : realtive good filtration K stab of extr}).
To prove this, we extend Sz\'ekelyhidi's blowup formula for Chow weights of filtrations \cite[Page 18]{szekelyhidi2015filtrations} to the relative setting, which involves producing an asymptotic expansion for inner products.
We also extend work of Boucksom \cite[Section 8]{szekelyhidi2015filtrations} on asymptotic vanishing orders of graded linear series to find a destabilizing invariant point to blow up.
Ultimately, we prove that extremal manifolds are relatively asymptotically Chow stable with respect to filtrations.

In the fourth and final section, we prove the first step of the strategy by giving an understanding of the twist of Hattori's good filtration, and complete the proofs of Theorem \ref{thm : main theorem} and Corollary \ref{cor : main corollary}.
Notably, we prove that the twist of the good filtration of two models coincides with the good filtration of the twist of one of the models.

\subsection{Acknowledgments}

I would like to thank my PhD supervisor Ruadha\'i Dervan for the continuous help throughout the journey of the present work and for many helpful discussions.
I would also like to thank Masafumi Hattori, R\'emi Reboulet, Lars Martin Sektnan, Xiaowei Wang and Mahmoud Abdelrazek for helpful conversations and comments on earlier versions of the present paper.
I received funding from a PhD studentship associated to Dervan's Royal Society University Research Fellowship (URF\textbackslash R1\textbackslash 201041).

\section{Preliminaries}

\subsection{The CM line bundle and models}\label{subsec : models and CM degrees}

We start with the introduction of the CM line bundle over a general irreducible base, following the treatment in \cite{FR06}.
Let $(\mathcal{X},\mathcal{L}) \to B$ be a flat family of $n$-dimensional $\Q$-polarized varieties over an irreducible scheme $B$.
This consists of a flat morphism $\mathcal{X} \to B$, of relative dimension $n$, and a relatively semi-ample $\Q$-line bundle $\mathcal{L}$ on $\mathcal{X}$. Assume that $n \geq 1$.

By the Knudsen--Mumford expansion \cite[Theorem 4]{KM76}, there exist $\Q$-line bundles $\lambda_{n+1}(\mathcal{X},\mathcal{L})$ and $\lambda_n(\mathcal{X},\mathcal{L})$ on $B$ such that
\begin{equation*}
    \det \pi_* (k\mathcal{L})
    = \frac{k^{n+1}}{(n+1)!}\lambda_{n+1}(\mathcal{X},\mathcal{L})
   - \frac{k^n}{2n!}\lambda_n(\mathcal{X},\mathcal{L})
    + O(k^{n-1})
\end{equation*}
as $k \to +\infty$, for sufficiently divisible $k$.
Moreover, by flatness, the coefficients of the Hilbert polynomial $h^0(\mathcal{X}_b,k\mathcal{L}_b) = a_0 k^n + a_1 k^{n-1} + O(k^{n-2})$ are independent of choice of $b$ in $B$.
\begin{definition}
    The \textit{Chow--Mumford (CM) line bundle} associated to the family $(\mathcal{X},\mathcal{L}) \to B$ is the $\Q$-line bundle
    \begin{equation*}
        \lambda_\mathrm{CM}(\mathcal{X},\mathcal{L})
        = \frac{1}{2V(\mathcal{X}_b,\mathcal{L}_b)} \bigg(
        \lambda_n(\mathcal{X},\mathcal{L}) 
        -
        \frac{n\mu(\mathcal{X}_b,\mathcal{L}_b)}{n+1} \lambda_{n+1}(\mathcal{X},\mathcal{L})
        \bigg),
    \end{equation*}
    where $V(\mathcal{X}_b,\mathcal{L}_b) = n!a_0$ and $\mu(\mathcal{X}_b,\mathcal{L}_b) = -\frac{2a_1}{na_0}$ denote the volume and slope of any fiber $(\mathcal{X}_b,\mathcal{L}_b)$.
\end{definition}
The definition above is due to Paul--Tian \cite{PT06}, while the original construction is due to Fujiki--Schumacher \cite{FS90}, where it was used to study families and construct moduli of polarized varieties.
In the Fano case, the CM line bundle is known to be ample on the good moduli space of K-polystable Fano varieties, proving its projectivity \cite{xu2020positivity,codogni2021positivity}.

Our definition of the CM line bundle is such that it is invariant under scaling, that is,
\begin{equation*}
    \lambda_\mathrm{CM}(\mathcal{X},r\mathcal{L})
    = \lambda_\mathrm{CM}(\mathcal{X},\mathcal{L}),
\end{equation*}
for all integers $r \geq 1$.
In the case where the structure morphism $\pi : \mathcal{X} \to B$ admits a relative canonical bundle $K_{\mathcal{X}/B}$, by Grothendiek--Riemann--Roch, the first Chern class of the CM line bundle satisfies
\begin{equation*}
    c_1(\lambda_\mathrm{CM}(\mathcal{X},\mathcal{L}))
    = \frac{\pi_* \big( c_1(\mathcal{L})^n c_1(K_{\mathcal{X}/B}) - \frac{n\mu(\mathcal{X}_b,\mathcal{L}_b)}{n+1} c_1(\mathcal{L})^{n+1} \big)}{2V(\mathcal{X}_b,\mathcal{L}_b)} .
\end{equation*}

\subsubsection{Models over DVRs}\label{subsubsec : models and dvrs}

We next discuss models, which will be the class of families of interest in the present paper.
Let $R$ be a discrete valuation ring with fraction field $K$ and residue field $\C$, and choose a valuation $\mathsf{v} : K^* \to \R$. Let $(X,L)$ be an $n$-dimensional normal $\Q$-polarized $K$-variety.
This means that $X \to \Spec K$ is a flat morphism of relative dimension $n$, and $L$ is a relatively ample $\Q$-line bundle on $X$.
\begin{definition}\label{def : model}
    An \textit{$R$-model} of $(X,L)$ is a flat normal projective relatively $\Q$-polarized integral scheme $(\mathcal{X},\mathcal{L})$ over $\Spec R$, together with an isomorphism $(\mathcal{X}_K,\mathcal{L}_K) \simeq (X,L)$ over $\Spec K$, where $(\mathcal{X}_K,\mathcal{L}_K)$ is the base change of $(\mathcal{X},\mathcal{L})$ over the generic point.
    Two models $(\mathcal{X},\mathcal{L})$ and $(\mathcal{X}',\mathcal{L}')$ are said to be \textit{isomorphic} if there exists an isomorphism over $\Spec R$ between them that extends the composition of isomorphisms $(\mathcal{X}_K,\mathcal{L}_K) \simeq (X,L) \simeq (\mathcal{X}_K',\mathcal{L}_K')$.
\end{definition}

The CM line bundles $\lambda_\mathrm{CM}$ and $\lambda'_\mathrm{CM}$ of two models $(\mathcal{X},\mathcal{L})$ and $(\mathcal{X}', \mathcal{L}')$ are canonically isomorphic over the generic fiber, and using the identification with $(X,L)$, one may identify the difference of CM line bundles as a rational number, as follows:
by functoriality of the Knudsen--Mumford expansion, the identification $(\mathcal{X}_K,\mathcal{L}_K) \simeq (X,L) \simeq (\mathcal{X}_K',\mathcal{L}_K')$ gives rise to an explicit isomorphism
\begin{equation*}
    \Phi : \lambda_\mathrm{CM}|_{\Spec K}
    \to \lambda_\mathrm{CM}'|_{\Spec K}
\end{equation*}
of line bundles. This isomorphism may also be viewed as an isomorphism of $K$-vector spaces
\begin{equation*}
    \Phi_* : H^0 (\Spec K, m  \lambda_\mathrm{CM} )
    \to H^0 (\Spec K, m  \lambda_\mathrm{CM}' ),
\end{equation*}
where $m$ is any sufficiently divisible integer such that one takes sections of line bundles.
The domain and target contain $\Lambda = H^0(\Spec R, m  \lambda_\mathrm{CM})$ and $\Lambda' = H^0(\Spec R, m  \lambda_\mathrm{CM}')$ as $R$-submodules, respectively.
\begin{definition}\label{def : difference of CM degrees}
    The \textit{difference of CM degrees} of two models $(\mathcal{X},\mathcal{L})$ and $(\mathcal{X}', \mathcal{L}')$ of $(X,L)$ is the valuation
    \begin{equation*}
        \mathrm{CM}(\mathcal{X}, \mathcal{L})
        - \mathrm{CM}(\mathcal{X}',\mathcal{L}')
        := m^{-1}\mathsf{v}(\gamma),
    \end{equation*}
    where $\gamma$ is any non-zero element $K$ such that $\Phi_*(\Lambda) = \gamma \cdot \Lambda'$.
    We write $\mathrm{CM}(\mathcal{X}, \mathcal{L}) \geq \mathrm{CM}(\mathcal{X}',\mathcal{L}')$, whenever $\mathrm{CM}(\mathcal{X}, \mathcal{L}) - \mathrm{CM}(\mathcal{X}',\mathcal{L}') \geq 0$, and similarly for other inequalities.
\end{definition}
Within K-stability, the idea of associating a number to two line bundles over the formal disk endowed with a fixed identification away from the special fiber goes back to Donaldson \cite{donaldson2012stability} (see also Wang \cite{wang2012height}).
In the case $R = \C [\![ t ]\!]$, the number $\mathsf{v}(\gamma)$ coincides with the unique largest integer $\nu$ such that, giving a trivializing section $s$ of $\Lambda$, the section $t^{-\nu}\Phi_*s$ extends to a section on $\Spec \C [\![ t ]\!]$.
In this case, the difference of CM degrees is equal to the degree of the line bundle $\overline{\lambda_\mathrm{CM} - \lambda_\mathrm{CM}'}$ on $\P^1$ obtained by gluing the line bundle $\lambda_\mathrm{CM} - \lambda_\mathrm{CM}'$ on $\Spec \C [\![ t ]\!]$ to the trivial line bundle on $\P^1 \setminus 0$, using the explicit identification $\lambda_\mathrm{CM} - \lambda_\mathrm{CM}'|_{\Spec \C (\!( t )\!)} \simeq \O_{\Spec \C (\!( t )\!)}$ on the intersection.

One may describe the difference of CM degrees in terms of coefficients of Hilbert polynomials associated to the models:
the composition of isomorphisms $\mathcal{X}'_K \simeq X \simeq \mathcal{X}_K$ gives rise to a birational map $\mathcal{X}' \dashrightarrow \mathcal{X}$. Choose a resolution of indeterminacies
\begin{equation}\label{eq : res of indeterm}
\begin{tikzcd}[row sep = 0.1em]
    & \mathcal{Y} \ar[dddddddddddddddddl,swap,"\mu'"] \ar[dddddddddddddddddr,"\mu"] &
    \\\\\\\\\\\\\\\\
    \\\\\\\\\\\\\\\\\\
    \mathcal{X}' \ar[rr,dashed] && \mathcal{X},
\end{tikzcd}
\end{equation}
compatible with the morphisms to $\Spec R$. 
After potentially changing $(\X,\L)$ with $(\X',\L')$, we can write $\mu^*\mathcal{L} = \mu'^* \mathcal{L}' + E - a\mathcal{Y}_0$ for an effective $\Q$-divisor $E$ supported in $\mathcal{Y}_0$ and a rational number $a > 0$.
This gives rise to an embedding
\begin{equation*}
    H^0(\mathcal{Y},k\mathcal{M}) \subset H^0(\mathcal{X},k\mathcal{L}),
\end{equation*}
for $\mathcal{M} = \mu'^*\mathcal{L}' - a\mathcal{Y}_0$ and sufficiently divisible $k$.

Assume that $-E$ is $\mu$-relatively ample by replacing $(\mathcal{Y},\mathcal{M})$ with the ample model $(\Proj_{\mathcal{X}}(\bigoplus_{k \geq 0} \mu_*\cO_\mathcal{Y}(k\mathcal{L})),\cO(1))$ over $(\mathcal{X},\mathcal{L})$.
There exists a closed subscheme $\mathcal{Z}$ in $\mathcal{X}$ supported in the special fiber such that the morphism $\mu : \mathcal{Y} \to \mathcal{X}$ is the blow-up of $\mathcal{X}$ along $\mathcal{Z}$ with exceptional divisor $rE$ for an integer $r \geq 1$.
By \cite[Lemma 5.4.24]{lazarsfeld2017positivity}, we obtain
\begin{equation*}
    H^i(\mathcal{Y},k\mathcal{M})
    \simeq  H^i(\mathcal{X},kr\mathcal{L} \otimes \mathcal{I}_\mathcal{Z}^{\otimes k}),
\end{equation*}
for all $i \geq 0$ and sufficiently large $k$. In particular, the dimension of the quotient $H^0(\mathcal{X},k\mathcal{L}) / H^0(\mathcal{Y},k\mathcal{M})$ is finite and computed over the special fiber.
\begin{theorem}\label{thm : difference of CM degrees}
    There exist rational numbers $b_0$ and $b_1$, such that
    \begin{equation}\label{eq : expansion of dim of quotient}
        \dim_\C \big(H^0(\mathcal{X},k\mathcal{L}) / H^0(\mathcal{Y},k\mathcal{M}) \big)
        = b_0 k^{n+1}
        + b_1 k^n
        + O(k^{n-1})
    \end{equation}
    as $k \to +\infty$, for sufficiently divisible $k$. Moreover
    \begin{equation*}
        \mathrm{CM}(\mathcal{X}, \mathcal{L})
        - \mathrm{CM}(\mathcal{X}',\mathcal{L}')
        = \frac{b_0 a_1 - b_1 a_0}{a_0^2},
    \end{equation*}
    where $h^0(\mathcal{X}_0,k\mathcal{L}_0)
    = h^0(\mathcal{X}_0',k\mathcal{L}_0')
    = a_0 k^n
    + a_1 k^{n-1}
    + O(k^{n-2})$.
\end{theorem}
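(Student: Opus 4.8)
The plan is to reduce everything to the Knudsen--Mumford expansions of the two models and to track how the difference of CM degrees is computed from them. First I would note that the difference of CM degrees $\mathrm{CM}(\mathcal{X},\mathcal{L}) - \mathrm{CM}(\mathcal{X}',\mathcal{L}')$ is, by definition, the $v$-valuation of a generator $\gamma$ of $\varphi(\Lambda) \subset K$, where $\Lambda = H^0(\Spec R, \lambda_{\mathrm{CM}}(\mathcal{X},\mathcal{L}) - \lambda_{\mathrm{CM}}(\mathcal{X}',\mathcal{L}')) \cong R$. Unwinding the definition of $\lambda_{\mathrm{CM}}$, this reduces to computing the $v$-valuations of the generators of the $R$-submodules of $K$ obtained from $\lambda_n$ and $\lambda_{n+1}$ of the two models, via the inclusion $\det \pi_*(k\mathcal{M}) \hookrightarrow \det \pi_*(k\mathcal{L})$ induced by $H^0(\mathcal{Y},k\mathcal{M}) \subset H^0(\mathcal{X},k\mathcal{L})$ (after replacing $\mathcal{L}$ by a power so $\mathcal{L}$ and $\mathcal{M}$ are genuine line bundles and $\pi_*(k\mathcal{L})$, $\pi_*(k\mathcal{M})$ are locally free; this is harmless by scaling-invariance of the CM line bundle). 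The cokernel of this inclusion is the torsion $R$-module $H^0(\mathcal{X},k\mathcal{L})/H^0(\mathcal{Y},k\mathcal{M})$, whose length is finite by the discussion preceding the theorem, and the valuation of the induced map on determinants is precisely this length, call it $\ell(k)$.

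Next I would establish the expansion \eqref{eq : expansion of dim of quotient}. Since $H^0(\mathcal{X},k\mathcal{L})/H^0(\mathcal{Y},k\mathcal{M})$ is a torsion $R$-module supported at $0$, its length equals its dimension as a $\C$-vector space, which by the identification $H^0(\mathcal{Y},k\mathcal{M}) \simeq H^0(\mathcal{X},kr\mathcal{L}\otimes \mathcal{I}_{\mathcal{Z}}^{\otimes k})$ (together with the vanishing of higher cohomology of the relevant sheaves on $\mathcal{X}$, which are supported in the special fiber, for $k$ large) is computed on the special fiber $\mathcal{X}_0$. Because $\mathcal{Z}$ is a fixed closed subscheme, the Hilbert-type function $k \mapsto \dim(H^0(\mathcal{X},k\mathcal{L})/H^0(\mathcal{X},kr\mathcal{L}\otimes\mathcal{I}_{\mathcal{Z}}^{\otimes k}))$ is a quasi-polynomial of degree $n+1$ in $k$ for sufficiently divisible $k$ (this is standard for the multiplicity of an ideal sheaf, or follows from the Hilbert polynomial of $\mathcal{Y}$ over $\Spec R$ minus that of $\mathcal{X}$); writing out its top two coefficients gives $b_0$ and $b_1$. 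The degree is exactly $n+1$ and not less because $E$ is $\mu$-relatively ample and nonzero, so $\mathcal{Z}$ has positive-dimensional support in a sense making $b_0 > 0$ — though for the statement we only need existence of $b_0, b_1$.

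Finally I would assemble the formula. Applying $\lambda_n$ and $\lambda_{n+1}$ to the inclusion $\pi_*(k\mathcal{M}) \hookrightarrow \pi_*(k\mathcal{L})$, the Knudsen--Mumford expansion gives that $\ell(k) = v\big(\text{generator of }\det\pi_*(k\mathcal{L})/\det\pi_*(k\mathcal{M})\big)$ expands as $\frac{k^{n+1}}{(n+1)!}\delta_{n+1} - \frac{k^n}{2n!}\delta_n + O(k^{n-1})$, where $\delta_j$ is the valuation of the generator of the $R$-module obtained from $\lambda_j(\mathcal{X},\mathcal{L}) - \lambda_j(\mathcal{X}',\mathcal{L}')$ after transporting $\mathcal{X}'$ to $\mathcal{Y}$ (noting $\lambda_j(\mathcal{X}',\mathcal{L}')$ and $\lambda_j(\mathcal{Y},\mathcal{M})$ differ by something with zero contribution, since $\mathcal{M}$ pulls back $\mathcal{L}'$ twisted by a multiple of the special fiber $\mathcal{Y}_0$, and twisting by $\mathcal{O}(\mathcal{Y}_0) = \pi'^*\mathcal{O}(0)$ changes the Knudsen--Mumford line bundles on the base only by contributions that are pulled back from $\Spec R$ — these need to be checked, but the key point is that on $\Spec R$ any line bundle is trivial, so the comparison of $\lambda_j$'s is literally a comparison of valuations of $R$-generators). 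Matching coefficients: $\delta_{n+1} = (n+1)!\, b_0$ and $\delta_n = -2n!\, b_1$. Then plugging into the definition of the difference of CM degrees,
\begin{equation*}
    \mathrm{CM}(\mathcal{X},\mathcal{L}) - \mathrm{CM}(\mathcal{X}',\mathcal{L}')
    = \frac{1}{2V}\Big(\delta_n - \frac{n\mu}{n+1}\delta_{n+1}\Big),
\end{equation*}
and substituting $V = n! a_0$, $\mu = -2a_1/(n a_0)$, $\delta_n = -2n! b_1$, $\delta_{n+1} = (n+1)! b_0$ yields
\begin{equation*}
    \frac{1}{2 n! a_0}\Big(-2n! b_1 - \frac{n}{n+1}\cdot\Big(-\frac{2a_1}{n a_0}\Big)(n+1)! b_0\Big)
    = \frac{1}{2 n! a_0}\Big(-2 n! b_1 + \frac{2 a_1}{a_0} n!\, b_0\Big)
    = \frac{b_0 a_1 - b_1 a_0}{a_0^2},
\end{equation*}
as claimed. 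The main obstacle I expect is the bookkeeping in the previous step: carefully justifying that the valuation of the determinant-of-cohomology map is additive in the way claimed (so that the $k^{n+1}$ and $k^n$ coefficients of $\ell(k)$ really are the valuations $\delta_{n+1}$ and $\delta_n$ attached to $\lambda_{n+1}$ and $\lambda_n$), and that the passage from $(\mathcal{X}',\mathcal{L}')$ to $(\mathcal{Y},\mathcal{M})$ and then to the sub-line-bundle of $\det\pi_*(k\mathcal{L})$ does not introduce spurious contributions — i.e. that $\lambda_j(\mathcal{Y},\mathcal{M})$ and $\lambda_j(\mathcal{X}',\mathcal{L}')$ have the same image in $K$ after the canonical generic identification. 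This is essentially the Wang--Odaka-type intersection computation transported to the relative DVR setting, and is where the effectivity of $E$ and the relative ampleness of $-E$ are used to keep all cohomology computations supported on the special fiber.
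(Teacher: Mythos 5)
Your route is genuinely different from the paper's. The paper proves the theorem by reducing to $R=\mathcal{O}_{C,0}$ for a smooth proper curve $C$, gluing the comparison bundle (trivialized via the generic identification) to the trivial bundle on $C\setminus\{0\}$, and applying Riemann--Roch on $C$ together with Hattori's estimate $\mathrm{length}_R H^i(\mathcal{Y},k\mathcal{M})=O(k^{n-1})$ for $i\geq 1$; this identifies $\dim\big(H^0(\mathcal{X},k\mathcal{L})/H^0(\mathcal{Y},k\mathcal{M})\big)$ with a degree, and the Knudsen--Mumford expansion then yields $b_0,b_1$ as normalized degrees of the $\lambda_{n+1}$- and $\lambda_n$-differences. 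You instead stay entirely over the DVR: the length of the torsion cokernel equals the valuation of the determinant of the lattice inclusion (elementary divisors over a DVR), and Knudsen--Mumford applied to $(\mathcal{X},\mathcal{L})$ and to $(\mathcal{Y},\mathcal{M})$ (which is flat and relatively semiample, so the expansion applies) gives both the polynomial expansion of $\ell(k)$ and the identification of its top coefficients. This is a legitimate and arguably more economical route, bypassing the compactification and the higher-cohomology length estimate; note only that to identify $H^0(\mathcal{Y},k\mu'^*\mathcal{L}')$ with $H^0(\mathcal{X}',k\mathcal{L}')$ you should take $\mathcal{Y}$ normal, so that $\mu'_*\mathcal{O}_{\mathcal{Y}}=\mathcal{O}_{\mathcal{X}'}$.

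There is, however, a genuine error in exactly the step you flagged: it is not true that $\lambda_j(\mathcal{Y},\mathcal{M})$ and $\lambda_j(\mathcal{X}',\mathcal{L}')$ have the same image in $K$ under the generic identification, and triviality of line bundles on $\Spec R$ is beside the point, since the whole content of the valuative definition is the relative position of lattices. Because $\mathcal{M}=\mu'^*\mathcal{L}'-a\mathcal{Y}_0$ and $\mathcal{Y}_0$ is cut out by the uniformizer, one has $H^0(\mathcal{Y},k\mathcal{M})=t^{ka}\,H^0(\mathcal{Y},k\mu'^*\mathcal{L}')$ as lattices, so the determinant lattice is shifted by $t^{kaN_k}$; this contributes $kaN_k=a a_0k^{n+1}+a a_1k^n+O(k^{n-1})$ to $\ell(k)$, i.e.\ $(n+1)!\,a\,a_0$ to your $\delta_{n+1}$ and $-2n!\,a\,a_1$ to your $\delta_n$. (This is visible in Hattori's curve formula quoted after the theorem, where the comparison is with $\overline{\mathcal{L}'}+a\mathcal{X}'_0$, not $\overline{\mathcal{L}'}$.) Your proof is rescued not because this contribution vanishes but because it cancels in the CM combination: the substitution $(b_0,b_1)\mapsto(b_0+ca_0,\,b_1+ca_1)$ leaves $b_0a_1-b_1a_0$ unchanged, equivalently $\delta_n-\tfrac{n\mu}{n+1}\delta_{n+1}$ is insensitive to the shift once $\mu=-\tfrac{2a_1}{na_0}$, reflecting the invariance of the CM line bundle under twisting $\mathcal{L}'$ by multiples of the special fiber. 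With that cancellation made explicit (and $\mathcal{Y}$ normalized), your argument closes and recovers the stated formula.
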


In the case where the models are restrictions of flat families $(\overline{\mathcal{X}},\overline{\mathcal{L}}) \to C$ and $(\overline{\mathcal{X'}},\overline{\mathcal{L'}}) \to C$ for a smooth projective curve $C$, Hattori proved
\begin{equation*}
    \dim \!\big(H^0(\mathcal{X},k\mathcal{L}) / H^0(\mathcal{Y},k\mathcal{M}) \big)
    = \chi(\overline{\mathcal{X}},k\overline{\mathcal{L}})
    - \chi(\overline{\mathcal{X}'},k(\overline{\mathcal{L}'} + a\mathcal{X}_0'))
    + O(k^{n-1}),
\end{equation*}
\cite[Page 19]{hattori2024minimizing}. It follows that Theorem \ref{thm : difference of CM degrees} recovers Hattori's definition of the difference of CM degrees of two models over a projective curve.

\begin{proof}
    We may assume that $R$ is regular by normalizing the base, pulling back the family and choosing an ample model; as the Knudsen--Mumford expansion is functorial, the difference of CM degrees is unchanged, while Equation \eqref{eq : expansion of dim of quotient} is computed over the central fiber. 
    
    Write $E_k = \pi_* (k\L)$ as a vector bundle on $\Spec R$, where $\pi : \mathcal{X} \to \Spec R$ is the structure morphism, as write
    \begin{align*}
        H_k^i &= H^i(\Spec K, E_k),
        \\
        H_{k,R}^i &= H^i(\Spec R, E_k).
    \end{align*}
    Similarly, write $E_k'$, $H_k'^{\,i}$ and $H_{k,R}'^{\,i}$ for the same objects, but using the model $(\X',\L')$.
    Similar to Definition \ref{def : difference of CM degrees}, one can define the degree of the vector bundle $E_k \otimes E_k'^{\,\vee}$ with respect to the identification $E_k|_{\Spec K} \simeq E'_k|_{\Spec K}$, explicitly given by the models, as the valuation
    \begin{equation*}
        \deg \big( E_k \otimes E_k'^{\,\vee} \big) := \mathsf{v}(\det \gamma),
    \end{equation*}
    where $\gamma$ is any element in $\GL_K(H_k'^{\,0})$ such that $\Phi(H_{k,R}^0) = \gamma(H_{k,R}'^{\,0})$, where $\Phi : H_k^i \to H_k'^{\,i}$ is the isomorphism associated to the identification.
    Note that by construction, we have
    \begin{equation}\label{eq : deg det E_k = deg E_k}
        \deg (\det E_k - \det E_k')
        = \deg ( E_k \otimes E_k'^{\,\vee} ),
    \end{equation}
    where the degree of the difference of the determinant line bundles is defined the same way as for the difference of CM line bundles.

    The localized Euler characteristic of $E_k \otimes E_k'^{\,\vee}$ with respect to the identification is defined as the difference of lengths or $R$-modules
    \begin{align*}
        \chi \big( E_k \otimes E_k'^{\,\vee} \big)
        &= \sum_{i \geq 0} (-1)^i \, \mathrm{len}_R \big(H_k^i/(H_{k,R}^i \cap \Phi^{-1}(H_{k,R}'^{\,i}) \big).
    \end{align*}
    Note that $H^i_k = 0$ for all $i > 0$ and $k$ very large, as $\Spec R$ is affine, and that $\Phi^{-1}(H_{k,R}'^{\,0}) = H^0(\Spec R,\pi_*^\mathcal{Y}(k\mathcal{M})) =: H^\mathcal{Y}_{k,R}$, by construction of $\mathcal{Y}$, where $\pi^\mathcal{Y} : \Y \to \Spec R$ is the structure morphism. 
    
    By the localized Riemann--Roch formula over $\Spec R$ (see for example \cite[Theorem 18.2]{fulton2013intersection}, where we apply the theory to the closed subscheme $0$ in $\Spec R$), we then have
    \begin{equation*}
    \begin{aligned}
        \deg \big( E_k \otimes E_k'^{\,\vee} \big)
        &= \chi \big( \pi_*(k\L) \otimes \pi'_*(k\L')^\vee \big),
        \\
        &= \mathrm{len}_R \big( H_k^0 / H^\mathcal{Y}_{k,R} \big),
        \\
        &= \dim_\C \big( H_{k,R}^0 / H^\mathcal{Y}_{k,R} \big),
        \\
        &= \dim_\C \big(H^0(\mathcal{X},k\mathcal{L}) / H^0(\mathcal{Y},k\mathcal{M}) \big),
    \end{aligned}
    \end{equation*}
    for $k$ very large, where we used \cite[Proposition 8.5]{hartshorne2013algebraic} in the last line.
    Applying the Knudsen--Mumford expansion to Equation \eqref{eq : deg det E_k = deg E_k} and taking the degree, we obtain
    \begin{equation}\label{eq : coeff b_0 for models}
    \begin{aligned}
        b_0
        &= \frac{\deg ( \lambda_{n+1}(\mathcal{X},\mathcal{L})
        -  \lambda_{n+1}(\mathcal{X}',\mathcal{L}'))}{(n+1)!},
        \\
        b_1
        &= -\frac{\deg (\lambda_n(\mathcal{X},\mathcal{L})
        - \lambda_n(\mathcal{X}',\mathcal{L}'))}{2n!}.
    \end{aligned}
    \end{equation}
    This yields the result.
\end{proof}

\begin{remark}\label{rmk : intersection formula for diff of DM deg}
    With further hypotheses on the family $\mathcal{Y} \to \Spec R$, the difference of CM degrees may be defined in different ways.
    In the case where the models are normal and compactify to a family over a smooth proper curve $C$, we have
    \begin{equation*}
        \mathrm{CM}(\mathcal{X}, \mathcal{L})
        - \mathrm{CM}(\mathcal{X}',\mathcal{L}')
        = \frac{(\mu^*\mathcal{L}^n - \mathcal{L}'^{\,n}).K_{\mathcal{X}/C} - \frac{n\mu}{n+1}(\mu^*\mathcal{L}^{n+1} - \mathcal{L}'^{\,n+1})}{2V}.
    \end{equation*}
    If the models are smooth and the family are restrictions of famililies over a smooth (not necessarily proper) curve, the difference of CM degrees may also be viewed through Deligne pairings, as in this case the $\Q$-line bundles $\lambda_i$ in the Knudsen-Mumford expansion may be interpreted through Deligne pairings \cite{PRS08}.
    Theorem \ref{thm : difference of CM degrees} generalizes the formula for the Donaldson--Futaki invariant for test configurations and the difference of CM degrees for families over proper curves due to Odaka \cite{odaka2013generalization} and Wang \cite{wang2012height}.
\end{remark}

\begin{example}\label{exp : models generalize test configurations}
    Within K-stability, examples of models are ones induced by test configurations \cite[Definition 2.1.1]{donaldson2002scalar} (see also \cite[Example 2.2]{dervan2024arcs}). A test configuration is a $\C^*$-equivariant degeneration $(\mathcal{Y},\mathcal{H}) \to \C$ of a polarized variety $(Y,H)$ and its Donaldson--Futaki invariant is the difference of CM degrees
    \begin{equation*}
        \mathrm{DF}(\mathcal{Y},\mathcal{H})
        = \mathrm{CM}(\mathcal{Y}_{\C[\![t]\!]},\mathcal{H}_{\C[\![t]\!]})
        - \mathrm{CM}(Y \times \Spec \C[\![t]\!],H),
    \end{equation*}
    between the restriction to the formal disk $\Spec \C[\![t]\!]$ and the trivial model.
\end{example}

\subsubsection{Equivariant and twisted models}\label{subsubsec: twisted and equivariant models}

We introduce a subclass of models that are compatible with vertical torus actions on $(X,L) \to \Spec K$.
Let $T$ be a $K$-torus in $\mathrm{Aut}_K(X,L)$, meaning that $T \cong \G_{m,K}^d$ for some integer $d \geq 0$.
\begin{definition}
    A model $(\mathcal{X},\mathcal{L})$ of $(X,L)$ is called \textit{$T$-equivariant} if it admits an $R$-torus $T_R$ in $\mathrm{Aut}_R(\mathcal{X},\mathcal{L})$ such that $T_R|_{(\mathcal{X}_K,\mathcal{L}_K)} = \varphi^*T$, where $\varphi : (\mathcal{X}_K,\mathcal{L}_K) \to (X,L)$ is the designated identification associated to the model $(\mathcal{X},\mathcal{L})$.
\end{definition}
This is a generalization of invariant test configurations due to Sz\'ekelyhidi \cite[Definition 2.1]{szekelyhidi2007extremal}.
As the generic fiber $\mathcal{X}_K$ is dense in $\mathcal{X}$, if such a torus $T_R$ exists, it must be unique.

Choose a uniformizer $t$ in the discrete valuation ring, that is, an element in $R$ such that its valuation $\mathsf{v}(t)$ generates the valuation group $\mathsf{v}(K^*)$ in $\R$; (in the case $R = \C[\![t]\!]$, the element $t$ itself is a uniformizer).
Let $(\mathcal{X},\mathcal{L})$ be a model of $(X,L)$ and let $\beta$ be a vertical $\G_{m,K}$-action on $(X,L)$. 
The choice of uniformizer $t$ gives rise to an automorphism $\beta(t) : (X,L) \to (X,L)$.
\begin{definition}\label{def : twisted model and equivariant model}
    The \textit{twisted model} of $(\mathcal{X},\mathcal{L})$ by $\beta$ is defined to be the model $(\mathcal{X}_\beta,\mathcal{L}_\beta) := (\mathcal{X},\mathcal{L})$ whose identification with $(X,L)$ has been replaced by $\varphi_\beta := \beta(t) \circ \varphi$.
\end{definition}
The definition of twisted models extends the notion of twisted test configurations introduced by Sz\'ekelyhidi \cite{szekelyhidi2007extremal}, and further developed by Li \cite{li2022g}.
Note that, unlike test configurations, models do not need to be equivariant in order for them to be twisted.

One may also twist by rational one-parameter subgroups $\xi$ of $T$; that is, elements of the vector space $\Hom (\G_{m,K},\Aut_K(X,L)) \otimes \Q$.
If $\xi = r^{-1}\beta$ for an integer $r \geq 1$ and a one-parameter subgroup $\beta$ of $T$, then we define the twist of $(\mathcal{X},\mathcal{L})$ by $\xi$ as the twist
\begin{equation*}
    (\mathcal{X}_\xi, \mathcal{L}_\xi)
    := (\mathcal{X}, r^{-1} \mathcal{L}_{r\beta}).
\end{equation*}
This means that the identification over $\Spec K$ takes the form
\begin{equation*}
    (\mathcal{X}_{\xi,K}, \mathcal{L}_{\xi,K})
    = (\mathcal{X},r^{-1}\mathcal{L})
    \simeq (X,r^{-1}L)
    \xrightarrow{r\beta(t)}
    (X,L).
\end{equation*}

\begin{remark}  
    In the more geometrical case of families $(X,L) \xrightarrow{\pi} \Delta^*$ over the punctured disk in $\C$, one would choose a vertical $\C^*$-action $\beta$ on $(X,L)$, giving rise to an automorphism $(X,L)$, sending a point $(x,\xi) \mapsto \beta(\pi(x))(x,\xi)$ for $x$ in $X$ and $\xi$ in $L_x$.
    The twist of a filling $(\mathcal{X},\mathcal{L}) \to \Delta$ to the whole disc would be defined in the same way by composing the identification with the automorphism.
\end{remark}

\subsection{Invariant, twisted and good filtrations.}

In this section we recall the notions of filtrations and Okounkov bodies of complex polarized varieties, following the treatment in \cite{szekelyhidi2015filtrations} (up to a change in sign) and give the definitions of invariant, twisted and good filtrations, introduced by Li \cite{li2022g} and Hattori \cite{hattori2024minimizing}.

Let $(Y,H)$ be an $n$-dimensional complex polarized variety with ring of sections $S = \bigoplus_{k \geq 0} S_k$, where $S_k = H^0(Y,kH)$.
\begin{definition}
    A \textit{(unitary, decreasing, multiplicative, linearly bounded}) $\Z$\textit{-filtration} of $(Y,H)$ is a collection $\mathcal{F} = (\mathcal{F}^\lambda S_k)_{\lambda,k}$ of linear subspaces $\mathcal{F}^\lambda S_k$ in $S_k$ for $\lambda$ in $\Z$ and $k$ in $\Z_{\geq 0}$ such that for all integers $\lambda \leq \mu$ and and all integers $k,l \geq 0$, we have
\begin{enumerate}[(i)]
    \item $1 \in \mathcal{F}^\lambda S_0$,
    \item $\mathcal{F}^\lambda S_k \supset \mathcal{F}^\mu S_k$,
    \item $\mathcal{F}^\lambda S_k \cdot \mathcal{F}^\mu S_l \subset \mathcal{F}^{\lambda + \mu} S_{k + l}$,
    \item $\mathcal{F}^\lambda S_k = S_k$ for all $\lambda < -Ck$
    and $\mathcal{F}^\lambda S_k = 0$ for all $\lambda \geq -Ck$,
\end{enumerate}
for some constant $C > 0$, independent from $\lambda$ and $k$.
The \textit{$p$\textsuperscript{th}-order weight function} of $\mathcal{F}$ is defined to be the function $w_\mathcal{F}^p : \Z \to \Z$, given by
\begin{equation*}
    w_\mathcal{F}^p(k)
    = \sum_{\lambda \in \Z} \lambda^p \dim (\mathcal{F}^\lambda S_k / \mathcal{F}^{\lambda + 1} S_k).
\end{equation*}
We write $w_\mathcal{F} = w_\mathcal{F}^1$.
The \textit{$L^2$-norm} of $\mathcal{F}$ is given by
\begin{equation*}
    ||\mathcal{F}||^2
    = \frac{c_0 a_0 - b_0^2}{a_0^2},
\end{equation*}
where $a_0$, $b_0$ and $c_0$ are real numbers coming from the expansions
\begin{equation}\label{eq : expansion for a_0 b_0 c_0}
\begin{aligned}
    \dim S_k
    &= a_0 k^n + O(k^{n-1}),
    \\
    w_\mathcal{F}(k)
    &= b_0 k^{n+1} + o(k^{n+1}),
    \\
    w_\mathcal{F}^2(k)
    &= c_0 k^{n+2} + o(k^{n+2}),
\end{aligned}
\end{equation}
as $k \to +\infty$.
We call $\mathcal{F}$ \textit{finitely generated} if the $\C[t]$-algebra $\bigoplus_{k,\lambda} \mathcal{F}^\lambda S_k \cdot t^{-\lambda}$ is finitely generated.
\end{definition}
The numbers $b_0$ and $c_0$ exist as the filtration is linearly bounded.
\begin{example}[{\cite[Definition 6.2]{nystrom2012test}}]\label{exp : filtrations and test configurations}
    An important example of filtrations are the ones induced by test configurations.
    In the case of a product test configuration $(Y \times \C,H)_\beta$ induced by a $\C^*$-action $\beta$ on $(Y,H)$, the associated filtration $\mathcal{F}_{(Y \times \C,H)_\beta}$ takes the form
    \begin{equation*}
        \mathcal{F}^\lambda_{(Y \times \C,H)_\beta} S_k
        = \bigoplus_{\mu \geq \lambda} (S_k)_\mu,
    \end{equation*}
    where $S_k = \bigoplus_{\mu \in \Z} (S_k)_\mu$ denotes the weight-space decomposition with respect to the $\C^*$-action induced by $\beta$.
    In particular, anything that is defined for filtrations is also defined for $\C^*$-actions through the associated filtration.
\end{example}
\begin{example}[{\cite[Section 3.2]{szekelyhidi2015filtrations}}]\label{exp : approx filtrations}
    Any filtration $\mathcal{F}$ may be approximated by finitely generated filtrations $\mathcal{F}_{(r)}$ of $(Y,rH)$ for $r \geq 1$, given by
    \begin{equation*}\label{eq : tautological approximation}
        \mathcal{F}^\lambda_{(r)} S_{kr} = \{ \sigma \;|\; \sigma \cdot t^{-\lambda} \in \mathcal{S}_{kr}(\mathcal{F}) \},
    \end{equation*}
    where $\mathcal{S}_k(\mathcal{F}) \subset S[t]$ is the $\C[t]$-subalgebra generated by $\bigoplus_\lambda \mathcal{F}^\lambda S_k \cdot t^{-\lambda}$.
    We call the sequence of filtrations $(\mathcal{F}_{(r)})$ the \textit{tautological approximation} of $\mathcal{F}$.
\end{example}

\subsubsection{Okounkov bodies and filtrations.}\label{subsubsec : okounkov bodies and filtrations}

Okounkov bodies \cite{okounkov1996brunn} play an important role in capturing asymptotic data of ample line bundles and filtrations.
For this section, assume that $(Y,H)$ is smooth.

Given local coordinates $(U,z)$ centered around a point $p$ in $Y$ and a global section $\sigma$ of $L$ which does not vanish at $p$, we define a convex subset
\begin{equation}\label{eq : Okounkov body}
    P = \overline{\bigcup_{k \geq 1} \frac{P_k}{k}}
    \subset \R^n,
\end{equation}
where $P_k = \{ \nu_\sigma(f) \;|\; f \in S_k \} \subset \Z^n$ and $\nu_\sigma(f)$ is the multi-degree of the lowest-order term of the power series $q$ in $\Z [\![ z_1,\dots,z_n ]\!]$ such that $f = \sigma^k q$ on $U$ with respect to the lexicographic order.
We call $P$ the \textit{Okounkov body} of $(Y,H)$ centered around $p$, induced by the data $(U,z,\sigma)$.
Important properties are $|P_k| = \dim S_k$ and $\mathrm{vol}(P) = n!\mathrm{vol}(L)$.
From \cite[Lemma 6]{szekelyhidi2015filtrations}, if follows that for sufficiently large integers $k$ and $r$, we have $\Delta_{k - 1} \cap \Z^n \subset P_{kr}$, where $\Delta_l = \{ (x_1, \dots, x_n) \in \R^n \;|\; x_i \geq 0 \text{ and } \sum_{i = 1}^n x_i \leq l \}$ denotes the closed solid $n$-simplex of side length $l > 0$ .
In particular, we have $\Delta_{1/r} \subset P$.

Let $\mathcal{F}$ be a filtration of $(Y,H)$.
\begin{definition}[{\cite[Section 3]{nystrom2012test}, \cite[Equation 13]{szekelyhidi2015filtrations}}]\label{def : g_Fk and G_F}
    The \textit{$k$\textsuperscript{th}-lattice function and the concave transform} of $\mathcal{F}$ with respect to $P$ are the functions $g_{\mathcal{F},k} : P_k \to \Z$ and $G_\mathcal{F} : P \to \R \cup \{ - \infty \}$, given by
    \begin{equation*}
    \begin{aligned}
        g_{\mathcal{F},k}(a) &= \max \big\{ \lambda \in \Z \;\big|\; a \in P_k\big(\tfrac{\lambda}{k},\mathcal{F}\big) \big\},
        \\
        G_\mathcal{F}(x) &= \sup \{s \in \R \;|\; x \in P(s,\mathcal{F}) \},
    \end{aligned}
    \end{equation*}
    where $P_k(s,\mathcal{F}) = \nu_\sigma(\mathcal{F}^{\lceil s k \rceil} S_k) \subset P_k$ and $P(s,\mathcal{F}) = \overline{\bigcup_{k \geq 1} \frac{P_k(s,\mathcal{F})}{k}} \subset P$.
\end{definition}
The function $G_\mathcal{F}$ is concave and upper semi-continuous on $P$ and is finite and continuous on the interior of $P$.
A key property (see \cite{nystrom2012test,szekelyhidi2015filtrations}) is that one may express the leading coefficient of the $p$\textsuperscript{th}-order weight function of $\mathcal{F}$ as
\begin{equation}\label{eq : weight function as sum of gk functions}
\begin{aligned}
    w_\mathcal{F}^p(k)
    &= \Big( \int_P G_\mathcal{F}^p \Big) k^{n+p} + o(k^{n+p}),
\end{aligned}
\end{equation}
as $k \to +\infty$, integrating with respect to the Lebesgue measure (as we do throughout our work). In particular, the $L^2$-norm of $\mathcal{F}$ may be written as
\begin{equation*}
    ||\mathcal{F}||^2
    = \frac{1}{\mathrm{vol}(P)} \int_P (G_\mathcal{F} - \overline{G}_\mathcal{F})^2,
\end{equation*}
where $\overline{G}_\mathcal{F} = \frac{1}{\mathrm{vol}(P)}\int_P G_\mathcal{F}$ denotes the average of $G_\mathcal{F}$ over $P$.

\subsubsection{Invariant and twisted filtrations.}

We introduce a subclass of filtrations which serves as a generalization of invariant test configurations \cite[Definition 2.1]{szekelyhidi2007extremal}.
Let $\beta$ be a $\C^*$-action on $(Y,H)$.
\begin{definition}[{\cite[Section 3.2]{li2022g}}]
    A filtration $\mathcal{F}$ of $(Y,H)$ is called \textit{$\beta$-invariant} if each of its filtered pieces $\mathcal{F}^\lambda S_k$ are invariant under the $\C^*$-action on $S_k$ induced by $\beta$.
\end{definition}
A $\beta$-invariant filtration $\mathcal{F}$ gives rise to weight-space decompositions
\begin{equation*}
\begin{aligned}
    \mathcal{F}^\lambda S_k &= \bigoplus_{\mu \in \Z} \big( \mathcal{F}^\lambda S_k \big)_\mu,
    \\
    \mathcal{F}^\lambda S_k / \mathcal{F}^{\lambda+1} S_k &= \bigoplus_{\mu \in \Z} \big( \mathcal{F}^\lambda S_k / \mathcal{F}^{\lambda+1} S_k \big)_\mu,
\end{aligned}
\end{equation*}
admitting natural identifications
$(\mathcal{F}^\lambda S_k)_\mu / (\mathcal{F}^{\lambda+1} S_k)_\mu \simeq (\mathcal{F}^\lambda S_k / \mathcal{F}^{\lambda+1} S_k)_\mu$.
\begin{definition}
    The \textit{mixed square-weight function} of a $\beta$-invariant filtration $\mathcal{F}$ and $\beta$ is the function $w_{\mathcal{F},\beta}^2 : \Z \to \Z$, given by
    \begin{equation*}
        w^2_{\mathcal{F},\beta}(k)
        = \sum_{\lambda,\mu \in \Z} \lambda \mu \dim \big(( \mathcal{F}^\lambda S_k / \mathcal{F}^{\lambda+1} S_k )_\mu \big),
    \end{equation*}
    The \textit{$L^2$-inner product} of $\mathcal{F}$ and $\beta$ is defined as
    \begin{equation*}
        \langle \mathcal{F}, \beta \rangle
        = \frac{c_0' a_0 - b_0 b_0'}{a_0^2},
    \end{equation*}
    where $a_0$ and $b_0$ are defined as in Equation \eqref{eq : expansion for a_0 b_0 c_0}, and, as $k \to +\infty$,
    \begin{equation*}
    \begin{aligned}
        w_\beta(k)
        &= b_0' k^{n+1} + O(k^n),
        \\
        w^2_{\mathcal{F},\beta}(k)
        &= c_0' k^{n+2} + o(k^{n+2}).
    \end{aligned}
    \end{equation*}
\end{definition}
The mixed square-weight and the inner product naturally extend the usual definitions for commuting $\C^*$-actions on $(Y,H)$ in \cite{szekelyhidi2007extremal}.
\begin{definition}[{\cite[Definition 3.6]{li2022g}}]
    The \textit{twisted filtration} of a $\beta$-invariant filtration $\mathcal{F}$ by $\beta$ is the filtration $\mathcal{F}_\beta$ of $(Y,H)$, defined by
    \begin{equation*}
        \mathcal{F}_\beta^\lambda S_k
        = \bigoplus_{\lambda_1 + \lambda_2 = \lambda} \big(\mathcal{F}^{\lambda_1} S_k \big)_{\lambda_2}.
    \end{equation*}
\end{definition}
The weight pieces of the twisted filtration are $(\mathcal{F}_\beta^\lambda S_k)_\mu = (\mathcal{F}^{\lambda - \mu} S_k)_\mu$.
\begin{lemma}\label{lem : weight function of twisted filtration}
    The first- and second-order weight functions of $\mathcal{F}_\beta$ satisfy
    \begin{equation*}
    \begin{aligned}
        w_{\mathcal{F}_\beta}
        &= w_\mathcal{F} + w_\beta,
        \\
        w_{\mathcal{F}_\beta}^2
        &= w_\mathcal{F}^2 + 2w^2_{\mathcal{F},\beta} + w^2_\beta.
    \end{aligned}
    \end{equation*}
    In particular, we have
    \begin{equation*}
        ||\mathcal{F}_\beta||^2
        = ||\mathcal{F}||^2
        + 2\langle \mathcal{F},\beta \rangle
        + ||\beta||^2.
    \end{equation*}
\end{lemma}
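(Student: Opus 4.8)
The plan is to reduce the two weight-function identities to a finite reindexing over the bigraded pieces of $\mathcal{F}$, after which the $L^2$ statement is a matter of reading off leading coefficients. Fix $k$ and abbreviate $d(\nu,\mu) = \dim\big((\mathcal{F}^\nu S_k/\mathcal{F}^{\nu+1}S_k)_\mu\big)$; linear boundedness ensures that only finitely many of these are nonzero, so all sums below are finite. Taking $\beta$-weight decompositions and graded pieces in the identity $(\mathcal{F}_\beta^\lambda S_k)_\mu = (\mathcal{F}^{\lambda-\mu}S_k)_\mu$ recorded above the statement, together with the natural identification $(\mathcal{F}^\nu S_k)_\mu/(\mathcal{F}^{\nu+1}S_k)_\mu \simeq (\mathcal{F}^\nu S_k/\mathcal{F}^{\nu+1}S_k)_\mu$, one obtains
\[
    \dim\big(\mathcal{F}_\beta^\lambda S_k/\mathcal{F}_\beta^{\lambda+1}S_k\big) = \sum_{\mu} d(\lambda-\mu,\mu),
\]
and hence $w_{\mathcal{F}_\beta}^p(k) = \sum_\lambda \lambda^p \sum_\mu d(\lambda-\mu,\mu)$. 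Substituting $\nu = \lambda-\mu$ turns the right-hand side into $\sum_{\nu,\mu}(\nu+\mu)^p\, d(\nu,\mu)$.

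For $p=1$ this splits as $\sum_{\nu,\mu}\nu\, d(\nu,\mu) + \sum_{\nu,\mu}\mu\, d(\nu,\mu)$; summing out $\mu$ in the first term gives $w_\mathcal{F}(k)$, since $\sum_\mu d(\nu,\mu) = \dim(\mathcal{F}^\nu S_k/\mathcal{F}^{\nu+1}S_k)$, and summing out $\nu$ in the second gives $w_\beta(k)$, since $\sum_\nu d(\nu,\mu) = \dim (S_k)_\mu$ (the filtration of $(S_k)_\mu$ induced by $\mathcal{F}$ exhausts it, by property (iv)). For $p=2$, expanding $(\nu+\mu)^2 = \nu^2+2\nu\mu+\mu^2$ and summing out the complementary index in each of the three resulting sums yields $w_\mathcal{F}^2(k) + 2w_{\mathcal{F},\beta}^2(k) + w_\beta^2(k)$, the middle term being exactly the mixed square-weight function $w_{\mathcal{F},\beta}^2(k) = \sum_{\nu,\mu}\nu\mu\, d(\nu,\mu)$ by definition, and $w_\beta^2(k) = \sum_\mu \mu^2 \dim (S_k)_\mu$ the second-order weight function of the filtration associated to $\beta$.

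For the $L^2$-norm, note that the underlying space $S_k$, hence its dimension, is unchanged for $\mathcal{F}_\beta$, so its leading coefficient is still $a_0$, while the two displayed identities give leading coefficients $b_0 + b_0'$ for $w_{\mathcal{F}_\beta}$ and $c_0 + 2c_0' + c_0''$ for $w_{\mathcal{F}_\beta}^2$, where $c_0''$ is the leading coefficient of $w_\beta^2$ (so that $||\beta||^2 = (c_0'' a_0 - (b_0')^2)/a_0^2$, consistently with applying the $L^2$-norm definition to the filtration attached to $\beta$). Plugging these into $||\mathcal{F}_\beta||^2 = (c_0^{\mathcal{F}_\beta} a_0 - (b_0^{\mathcal{F}_\beta})^2)/a_0^2$ and expanding $(b_0+b_0')^2$ separates the expression into $||\mathcal{F}||^2 + 2\langle\mathcal{F},\beta\rangle + ||\beta||^2$. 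I do not expect a genuine obstacle here: the only points deserving a sentence of care are the finiteness of all the index sums (so the substitution $\nu=\lambda-\mu$ is legitimate) and the two ``axis'' identities $\sum_\mu d(\nu,\mu)=\dim(\mathcal{F}^\nu S_k/\mathcal{F}^{\nu+1}S_k)$ and $\sum_\nu d(\nu,\mu)=\dim(S_k)_\mu$, both immediate from the filtration axioms and the $\beta$-invariance.
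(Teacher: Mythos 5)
Your proposal is correct and follows essentially the same route as the paper: a direct computation with the bigraded pieces $(\mathcal{F}^\nu S_k/\mathcal{F}^{\nu+1}S_k)_\mu$, the reindexing $\nu=\lambda-\mu$ coming from $(\mathcal{F}_\beta^\lambda S_k)_\mu=(\mathcal{F}^{\lambda-\mu}S_k)_\mu$, and the two axis identities (the paper's version of the second being $\bigoplus_{\nu}(\mathcal{F}^{\nu}S_k/\mathcal{F}^{\nu+1}S_k)_{\mu}\cong (S_k)_{\mu}$). The concluding leading-coefficient argument for the $L^2$-norm is the intended (and in the paper implicit) deduction.
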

In the case of test configurations, this is proven using that the infinitesimal generator of a twisted test configuration is the sum of infinitesimal generators of the test configuration and the vertical $\C^*$-action.
In the proof for filtrations, this is hidden behind the definition of a twisted filtration which is chosen in such a way that it matches the definition of a twisted test configuration.
\begin{proof}
    Writing $S_k = \bigoplus_{\mu \in \Z} (S_k)_\mu$ for the weight-space decomposition with respect to the $\beta$-action, we compute directly:
    \begin{align*}
        w_{\mathcal{F}_\beta}(k)
        &= \sum_{\lambda_1,\lambda_2 \in \Z} (\lambda_1 + \lambda_2) \dim \big( ( \mathcal{F}^{\lambda_1}S_k / \mathcal{F}^{\lambda_1 + 1}S_k)_{\lambda_2} \big),
        \\
        &= \sum_{\lambda_1 \in \Z} \lambda_1 \dim \Big( \bigoplus_{\lambda_2 \in \Z} \big( \mathcal{F}^{\lambda_1}S_k / \mathcal{F}^{\lambda_1 + 1}S_k \big)_{\lambda_2} \Big)
        \\
        &\qquad + \sum_{\lambda_2 \in \Z} \lambda_2 \dim \Big( \bigoplus_{\lambda_1 \in \Z} \big( \mathcal{F}^{\lambda_1}S_k / \mathcal{F}^{\lambda_1 + 1}S_k \big)_{\lambda_2} \Big),
        \\
        &= \sum_{\lambda_1 \in \Z} \lambda_1 \dim \big( \mathcal{F}^{\lambda_1} S_k / \mathcal{F}^{\lambda_2 + 1} S_k \big)
        + \sum_{\lambda_2 \in \Z} \lambda_2 \dim \big( (S_k)_{\lambda_2} \big),
        \\
        &= w_\mathcal{F}(k)
        + w_\beta(k).
    \end{align*}
    using that there is an isomorphism $\bigoplus_{\lambda_1 \in \Z} ( \mathcal{F}^{\lambda_1}S_k / \mathcal{F}^{\lambda_1 + 1}S_k)_{\lambda_2} \cong (S_k)_{\lambda_2}$.
    Following a similar computation for the square-weight, we obtain
    \begin{equation*}
    \begin{aligned}
        w^2_{\mathcal{F}_\beta}(k)
        &= \sum_{\lambda_1 \in \Z} \lambda_1^2 \dim ( \mathcal{F}^{\lambda_1} S_k / \mathcal{F}^{\lambda_2 + 1} S_k)
        \\
        &\quad+ 2\sum_{\lambda_1,\lambda_2 \in \Z} \lambda_1 \lambda_2 \dim \big( ( \mathcal{F}^{\lambda_1}S_k / \mathcal{F}^{\lambda_1 + 1}S_k)_{\lambda_2} \big)
        \\
        &\quad+ \sum_{\lambda_2 \in \Z} \lambda_2^2 \dim \big( (S_k)_{\lambda_2} \big),
        \\
        &= w_\mathcal{F}^2(k) + 2w^2_{\mathcal{F},\beta}(k) + w^2_\beta(k),
    \end{aligned}
    \end{equation*}
    as desired.
\end{proof}
A modification of \cite[Lemma 3.3]{nystrom2012test} and \cite[Theorem A]{boucksom2011okounkov} gives rise to the following relation and asymptotic formula between the weight pieces, lattice functions and concave transforms:
\begin{lemma}\label{lem : T expansion for invariant filtrations by C actions}
    For any function $T : \Z^2 \to \R$, we have
    \begin{equation*}
        \sum_{\lambda, \mu \in \Z} T(\lambda,\mu) \dim \big((\mathcal{F}^\lambda S_k / \mathcal{F}^{\lambda+1} S_k)_\mu \big)
        = \sum_{a \in P_k} T(g_{\mathcal{F},k}(a), g_{\beta,k}(a)).
    \end{equation*}
    Moreover, for any continuous function $T : \R^2 \to \R$ such that $T \circ (G_\mathcal{F},G_\beta)$ is integrable on $P$ with respect to the Lebesgue measure, we have
    \begin{equation*}
        \lim_{k \to \infty} \frac{\sum_{a \in P_k} T \big(\frac{g_{\mathcal{F},k}(a)}{k}, \frac{g_{\beta,k}(a)}{k}\big)}{k^n}
        = \int_P T \circ (G_\mathcal{F}, G_\beta).
    \end{equation*}
\end{lemma}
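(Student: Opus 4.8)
The plan is to prove the exact identity at each fixed level $k$ first, and then deduce the asymptotic statement by an equidistribution argument in the style of \cite{boucksom2011okounkov,nystrom2012test}. For Step~1, I would use the $\beta$-invariance of $\mathcal{F}$ together with a well-chosen Okounkov datum: since each $\mathcal{F}^\lambda S_k$ is a direct sum of $\beta$-weight spaces, take $(U,z,\sigma)$ at a $\beta$-fixed point $p$ with $z$ and $\sigma$ $\beta$-semi-invariant. Then $\nu_\sigma$ is graded by the $\beta$-action: for a weight vector $f\in S_k$ of weight $\mu$, the quotient $\sigma^{-k}f$ is supported on monomials $z^\alpha$ of a single $\beta$-weight, so $\nu_\sigma(f)$ lies in that weight slice and in particular determines $\mu$. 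Running the standard construction of a basis adapted to a filtration and a valuation within each weight space of $S_k$ — and using that two weight vectors with equal $\nu_\sigma$-value necessarily have equal weight, so that no collisions arise when the weight-space bases are combined — I would obtain a basis $\{e_i\}$ of $S_k$ consisting of $\beta$-weight vectors on which $\nu_\sigma$ is injective with image exactly $P_k$, and such that $\{e_i\in\mathcal{F}^\lambda S_k\}$ spans $\mathcal{F}^\lambda S_k$ for all $\lambda$. For such a basis $g_{\mathcal{F},k}(\nu_\sigma(e_i))$ equals the $\mathcal{F}$-jump of $e_i$ and $g_{\beta,k}(\nu_\sigma(e_i))$ equals the $\beta$-weight of $e_i$, so the number of indices with jump $\lambda$ and weight $\mu$ is $\dim\bigl((\mathcal{F}^\lambda S_k/\mathcal{F}^{\lambda+1}S_k)_\mu\bigr)$; summing $T\bigl(g_{\mathcal{F},k}(\nu_\sigma(e_i)),g_{\beta,k}(\nu_\sigma(e_i))\bigr)$ over $i$ then gives the first identity.

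For Step~2, apply the exact identity with $T$ replaced by $(\lambda,\mu)\mapsto T(\lambda/k,\mu/k)$ and divide by $k^n$; it then suffices to show that the empirical measures $\mu_k = k^{-n}\sum_{a\in P_k}\delta_{(g_{\mathcal{F},k}(a)/k,\,g_{\beta,k}(a)/k)}$ on $\R^2$ converge weakly to the pushforward $(G_\mathcal{F},G_\beta)_*(\mathrm{Leb}_P)$ of the Lebesgue measure on $P$, after which one passes from bounded continuous $T$ to the stated integrability hypothesis by truncation. The $\mu_k$ are supported in a fixed compact set, by linear boundedness of $\mathcal{F}$ and of the $\beta$-weights. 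Weak convergence I would obtain from three inputs: the equidistribution $k^{-n}\sum_{a\in P_k}\delta_{a/k}\to\mathrm{Leb}_P$; the uniform convergence $g_{\mathcal{F},k}/k\to G_\mathcal{F}$ and $g_{\beta,k}/k\to G_\beta$ on compact subsets of the interior of $P$, which are the single-filtration statements of \cite{boucksom2011okounkov,nystrom2012test} (using concavity, upper semicontinuity and linear boundedness), so that the joint map converges uniformly there; and the fact that the level sets $\{G_\mathcal{F}=s\}$ and $\{G_\beta=t\}$ of these concave functions are Lebesgue-null for all but countably many values $s,t$, which controls the contribution near the boundary of $P$. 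Combining these yields convergence of $\mu_k\bigl((-\infty,s]\times(-\infty,t]\bigr)$ to the limit at its continuity points, hence $\mu_k\to(G_\mathcal{F},G_\beta)_*(\mathrm{Leb}_P)$, and the identity \eqref{eq : weight function as sum of gk functions}-style limit follows; the $L^2$-inner product formula is the special case $T(\lambda,\mu)=\lambda\mu$ combined with the $T=\lambda$ and $T=\mu$ cases.

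The main obstacle is Step~1, the exact identity. In the one-parameter case one only needs $\dim W=\lvert\nu_\sigma(W)\rvert$ and the monotonicity of $\lambda\mapsto\nu_\sigma(\mathcal{F}^\lambda S_k)$, so no compatibility of the Okounkov flag with anything is required; the two-parameter version, by contrast, forces $\nu_\sigma$ to record the $\mathcal{F}$-jump and the $\beta$-weight \emph{simultaneously}, and this genuinely fails for a general flag (one can already see a discrepancy on $\mathbb{P}^1$ with $\sigma$ and $z$ centred at a non-fixed point). This is exactly why the flag must be taken $\beta$-equivariant, and getting that reduction clean — in particular arguing that same-$\nu_\sigma$ weight vectors have the same weight and that the weight-space adapted bases glue to a globally $\nu_\sigma$-injective basis adapted to $\mathcal{F}$ — is the only delicate point. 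Once the exact identity is in place, Step~2 is a routine adaptation of the equidistribution arguments already available in the literature.
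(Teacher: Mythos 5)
Your proposal is correct, and on both steps it takes a genuinely different route from the paper. For the exact identity, the paper does not build adapted bases: it asserts, citing Witt Nystr\"om, the joint counting formula $\dim ((\mathcal{F}^\lambda S_k)_\mu)=\sum_{a\in P_k}\delta_a(g_{\mathcal{F},k}\geq\lambda)\,\delta_a(g_{\beta,k}=\mu)$ and then telescopes in $\lambda$; the delicate point you isolate --- that $\nu_\sigma$ must record the $\mathcal{F}$-jump and the $\beta$-weight \emph{simultaneously}, which forces the Okounkov datum to be $\beta$-compatible and genuinely fails otherwise --- is exactly the content of that cited formula, and the paper passes over it silently. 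Your $\mathbb{P}^1$ warning is accurate: with $(S_1)_1=\C x$, $(S_1)_0=\C y$, $\mathcal{F}^1S_1=\C y$ and the datum centred at $[1{:}1]$, the joint pairs produced by $(g_{\mathcal{F},1},g_{\beta,1})$ are $(1,1),(0,0)$ while the left-hand side gives $(1,0),(0,1)$, so the identity really needs a fixed centre with semi-invariant $z$ and $\sigma$ (your equivariant-basis argument supplies it; the only caveats --- that the semi-invariant section may exist only after replacing $H$ by a multiple, and that linearized coordinates at the smooth fixed point exist formally --- are harmless for computing $\nu_\sigma$). This restriction is consistent with how the lemma is used later, since the centre $p$ in Section 3 is always $T$-invariant, but it is a hypothesis your proof makes explicit and the paper's does not. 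For the limit statement, the paper argues through superlevel sets: it rewrites $\int_P T\circ(G_\mathcal{F},G_\beta)$ by an integration-by-parts in $(t,s)$ against volumes of $P(t,\mathcal{F})$ and $P(s,\beta)$, identifies those volumes as limits of $\dim \mathcal{F}^{\lceil tk\rceil}S_k/k^n$ via Boucksom--Chen and Lazarsfeld--Musta\c{t}\u{a}, and undoes the manipulation at level $k$; your route via weak convergence of the empirical measures $k^{-n}\sum_{a\in P_k}\delta_{(g_{\mathcal{F},k}(a)/k,\,g_{\beta,k}(a)/k)}$ to $(G_\mathcal{F},G_\beta)_*\mathrm{Leb}_P$ is equivalent but works directly with the joint law rather than reassembling it from one-parameter data, and the passage to general $T$ is immediate since linear boundedness makes $T\circ(G_\mathcal{F},G_\beta)$ automatically bounded; the inputs you invoke (equidistribution of $P_k/k$, locally uniform convergence $g_{\mathcal{F},k}/k\to G_\mathcal{F}$ on the interior from superadditivity, negligibility of level sets of concave functions) are all available in the cited literature. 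In short: your argument is sound, differs from the paper's in both the combinatorial and the asymptotic step, and has the merit of making explicit the equivariance of the Okounkov datum on which the stated identity actually depends.
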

\begin{proof}
    From \cite[Equation 1]{nystrom2012test} and the definition of the lattice functions, we have $\dim ((\mathcal{F}^\lambda R_k)_\mu) = \sum_{a \in P_k } \delta_a( g_{\mathcal{F},k} \geq \lambda ) \delta_a ( g_{\beta,k} = \mu)$ where $\delta_a$ is the Dirac measure on $P_k$ concentrated at a point $a$.
    Thus,
    \begin{align*}
        \sum_{\lambda, \mu \in \Z} &T(\lambda,\mu) \dim \big((\mathcal{F}^\lambda S_k / \mathcal{F}^{\lambda+1} S_k)_\mu \big),
        \\
        &= \sum_{\lambda, \mu \in \Z}  T(\lambda,\mu) \sum_{a \in P_k } \big( \delta_a( g_{\mathcal{F},k} \geq \lambda )
        - \delta_a( g_{\mathcal{F},k} \geq \lambda+1 ) \big) \delta_a ( g_{\beta,k} = \mu),
        \\
        &= \sum_{a \in P_k} \sum_{\lambda, \mu \in \Z} T(\lambda,\mu) \delta_a( g_{\mathcal{F},k} = \lambda ) \delta_a ( g_{\beta,k} = \mu),
        \\
        &= \sum_{a \in P_k} T(g_{\mathcal{F},k}(a), g_{\beta,k}(a)).
    \end{align*}
    
    The subset $P(t,\mathcal{F})$ in $P$ is the Okounkov subbody of the graded subalgebra $S(t,\mathcal{F}) = \bigoplus_{k \geq 0} S_k(t,\mathcal{F})$ for $S_k(t,\mathcal{F}) = \mathcal{F}^{\lceil t k \rceil} S_k$. In the case where $T$ is continuously differentiable, writing $DT(t,s) : \R^2 \to \R$ for the total derivative of $T$ at a point $(t,s)$, we have
    \begin{align*}
        \int_P T \circ (G_\mathcal{F}, G_\beta)
        &= \int_{\R^2} DT(t,s)( \mathrm{vol}(G_\mathcal{F} \geq t), \mathrm{vol}(G_\beta \geq s)),
        \\
        &= \int_{\R^2} DT(t,s)( \mathrm{vol}(P(t,\mathcal{F})) , \mathrm{vol}(P(t,\beta))),
        \\
        &= \lim_{k \to \infty} \int_{\R^2} DT(t,s) \Big( \frac{\dim S_k(t,\mathcal{F}) }{k^n} , \frac{ \dim S_k(t,\beta) }{k^n} \Big),
        \\
        &= \lim_{k \to \infty} \sum_{\lambda,\mu \in \Z} T \Big( 
            \frac{\lambda}{k}, \frac{\mu}{k}
        \Big)
        \frac{\dim ( (\mathcal{F}^\lambda S_k / \mathcal{F}^{\lambda+1} S_k)_\mu )}{k^n},
        \\
        &= \lim_{k \to \infty} \frac{\sum_{a \in P_k} T \big(\frac{g_{\mathcal{F},k}(a)}{k}, \frac{g_{\beta,k}(a)}{k}\big)}{k^n},
    \end{align*}
    where we used \cite[Equation 1.8]{boucksom2011okounkov} and \cite[Theorem 2.13]{lazarsfeld2009convex}.
    Note that we were allowed to change the limit with the integral as the volume of $P(t,\mathcal{F})$ is bounded uniformly by the volume of $P$.
    The result for general continuous functions follows by a density argument.
\end{proof}
In particular, the mixed square-weight function between $\mathcal{F}$ and $\beta$ satisfies
\begin{equation}\label{eq : weight squared function as sum of gFk and gbetak functions}
\begin{aligned}
    w_{\mathcal{F},\beta}^2(k)
    &= \Big( \int_P G_\mathcal{F} G_\beta \Big) k^{n+2} + o(k^{n+2}),
\end{aligned}
\end{equation}
as $k \to +\infty$, and the $L^2$-inner product of $\mathcal{F}$ and $\beta$ may be expressed as
\begin{equation*}
    \langle \mathcal{F}, \beta \rangle
    = \frac{1}{\mathrm{vol}(P)} \int_P (G_\mathcal{F} - \overline{G}_\mathcal{F}) (G_\beta - \overline{G}_\beta).
\end{equation*}

\subsubsection{Torus-invariant filtrations}

We conclude the section with the notion of torus-invariant filtrations and the associated reduced $L^2$-norm.
Let $T$ be a complex torus of rank $d$ in the automorphism group $\mathrm{Aut}(Y,H)$, and write
\begin{equation*}
    N_\Z(T) = \mathrm{Hom}(\C^*,T).
\end{equation*}
The group $N_\Z(T)$ is free abelian of rank $d$ whose elements are $\C^*$-actions on $(Y,H)$. We write the group operation in $N_\Z(T)$ additively.
\begin{definition}
    A filtration $\mathcal{F}$ of $(Y,H)$ is called \textit{$T$-invariant}, if all of its filtration pieces $\mathcal{F}^\lambda S_k$ are invariant under the natural linear $T$-action on $S_k$.
\end{definition}
A filtration $\mathcal{F}$ is $T$-invariant if and only if it is $\beta$-invariant for all $\beta$ in $N_\Z(T)$.
Furthermore, the inner product $\langle \mathcal{F}, \beta \rangle$ extends bilinearly to the real vector space $N_\R(T) = N_\Z(T) \otimes \R$.
This allows us to make the following definition:
\begin{definition}
    The \textit{reduced norm} of a $T$-invariant filtration $\mathcal{F}$ of $(Y,H)$ is given by the infimum
    \begin{equation*}
        ||\mathcal{F}||_T^2
        = \inf_{\xi \in N_\R(T)} \big( ||\mathcal{F}||^2
        + 2\langle \mathcal{F},\xi \rangle
        + ||\xi||^2 \big),
    \end{equation*}
    where $||\xi||^2 = \langle \xi,\xi \rangle$ is the natural extension of the norm to $N_\R(T)$.
\end{definition}
In light of Lemma \ref{lem : weight function of twisted filtration}, the expression in the infimum may be thought of as the norm of the twist $\mathcal{F}_\xi$ by a vector $\xi$ as an $\R$-filtration (see \cite[Definition 2.28]{li2022g}).
The infimum is actually a minimum:
\begin{lemma}\label{lem : red norm is minimum}
    We have
    \begin{equation*}
        ||\mathcal{F}||_T^2
        = ||\mathcal{F}||^2
        - 2\langle \mathcal{F},\xi(\mathcal{F}) \rangle
        + ||\xi(\mathcal{F})||^2,
    \end{equation*}
    for $\xi(\mathcal{F}) = \sum_{i = 1}^d \frac{\langle \mathcal{F},\beta_i \rangle}{\langle \beta_i, \beta_i \rangle} \beta_i$ where $(\beta_i)$ is any $\Z$-basis of $N_\Z(T)$.
\end{lemma}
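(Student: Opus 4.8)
The plan is to regard $q(\xi):=\|\mathcal{F}\|^2+2\langle\mathcal{F},\xi\rangle+\|\xi\|^2$, whose infimum over $\xi\in N_\R(T)$ is $\|\mathcal{F}\|_T^2$, as a degree-two polynomial function on the finite-dimensional real vector space $N_\R(T)$ whose purely quadratic part is $\langle\cdot,\cdot\rangle$, and then simply to complete the square. For this to yield a genuine minimum I first need $\langle\cdot,\cdot\rangle$ to be positive definite on $N_\R(T)$; this is the only step that is not pure linear algebra, and it is where I expect the actual content to sit. Using the formula $\|\xi\|^2=\frac{1}{\vol(P)}\int_P(G_\xi-\overline{G}_\xi)^2$ recalled above, it suffices to know that $\xi\mapsto G_\xi$ (for a flag defining $P$ chosen compatibly with $T$, so that $G_\xi$ is linear in $\xi$) is injective modulo constants, which follows from effectiveness of the $T$-action on $(Y,H)$; equivalently, this is positivity of the Futaki--Mabuchi bilinear form of $T$, which I would cite.

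Granting this, let $\xi(\mathcal{F})\in N_\R(T)$ be the element characterised by $\langle\xi(\mathcal{F}),\eta\rangle=\langle\mathcal{F},\eta\rangle$ for all $\eta\in N_\R(T)$ --- the $\langle\cdot,\cdot\rangle$-orthogonal projection, or Riesz representative, of the linear functional $\langle\mathcal{F},\cdot\rangle$; for a $\langle\cdot,\cdot\rangle$-orthogonal basis $(\beta_i)$ of $N_\R(T)$ this is exactly $\sum_{i=1}^d\frac{\langle\mathcal{F},\beta_i\rangle}{\langle\beta_i,\beta_i\rangle}\beta_i$, as one checks on the basis and extends by linearity. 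In the Okounkov body picture, $G_{\xi(\mathcal{F})}$ is the $L^2(P)$-orthogonal projection of $G_\mathcal{F}$ onto the span of $\{\,G_\eta:\eta\in N_\R(T)\,\}$. Two identities drop out of the defining property: $\langle\mathcal{F},\xi(\mathcal{F})\rangle=\langle\xi(\mathcal{F}),\xi(\mathcal{F})\rangle=\|\xi(\mathcal{F})\|^2$, and $\langle\mathcal{F},\xi\rangle=\langle\xi(\mathcal{F}),\xi\rangle$ for every $\xi\in N_\R(T)$.

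The computation is then immediate: for all $\xi\in N_\R(T)$,
\[
  q(\xi)=\|\mathcal{F}\|^2+2\langle\xi(\mathcal{F}),\xi\rangle+\|\xi\|^2
  =\bigl(\|\mathcal{F}\|^2-\|\xi(\mathcal{F})\|^2\bigr)+\|\xi+\xi(\mathcal{F})\|^2,
\]
and since $\langle\cdot,\cdot\rangle$ is positive definite the term $\|\xi+\xi(\mathcal{F})\|^2$ is nonnegative and vanishes precisely at $\xi=-\xi(\mathcal{F})$. Hence the infimum defining $\|\mathcal{F}\|_T^2$ is attained, at $\xi=-\xi(\mathcal{F})$, and equals $\|\mathcal{F}\|^2-\|\xi(\mathcal{F})\|^2$, which by the first identity above coincides with $\|\mathcal{F}\|^2-2\langle\mathcal{F},\xi(\mathcal{F})\rangle+\|\xi(\mathcal{F})\|^2$; this is the claimed formula. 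I would close by noting, via Lemma \ref{lem : weight function of twisted filtration}, that $-\xi(\mathcal{F})$ is exactly the twist for which the (real) twisted filtration $\mathcal{F}_{-\xi(\mathcal{F})}$ is $\langle\cdot,\cdot\rangle$-orthogonal to all of $N_\R(T)$, so that $\|\mathcal{F}\|_T$ is the component of $\mathcal{F}$ perpendicular to the torus; the only care needed anywhere is the sign in identifying the minimizer as $-\xi(\mathcal{F})$ rather than $\xi(\mathcal{F})$.
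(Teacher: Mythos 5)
Your proof is correct and follows essentially the same route as the paper: both arguments rest on the reproducing identity $\langle \xi(\mathcal{F}),\zeta\rangle=\langle\mathcal{F},\zeta\rangle$ for all $\zeta\in N_\R(T)$ and then complete the square, locating the minimizer at $\xi=-\xi(\mathcal{F})$ and obtaining the value $\|\mathcal{F}\|^2-2\langle\mathcal{F},\xi(\mathcal{F})\rangle+\|\xi(\mathcal{F})\|^2$. The only (cosmetic) difference is that the paper never invokes positive definiteness: once $\xi(\mathcal{F})$ is given by the explicit formula (which, as you rightly note, requires reading the basis $(\beta_i)$ as orthogonal for the reproducing identity to hold), nonnegativity of $\|\zeta\|^2$, immediate from the $L^2$ integral formula on the Okounkov body, already suffices.
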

\begin{proof}
    For any vector $\zeta$ in $N_\R(T)$, we have $\langle \xi(\mathcal{F}),\zeta \rangle = \langle \mathcal{F},\zeta \rangle$, implying
    \begin{equation*}
        ||\xi(\mathcal{F}) + \zeta||^2
        -2\langle \mathcal{F},\xi(\mathcal{F}) + \zeta \rangle
        =
        ||\xi(\mathcal{F})||^2
        + ||\zeta||^2
        -2 \langle \mathcal{F},\xi(\mathcal{F}) \rangle.
    \end{equation*}
    This is bounded from below by $||\xi(\mathcal{F})||^2 -2 \langle \mathcal{F},\xi(\mathcal{F}) \rangle$, as desired.
\end{proof}

\subsubsection{Good filtrations}

An issue with non-finitely-generated filtrations is that, in general, we have no control over subleading coefficients of the weight function.
This makes it difficult to define the polynomial Donaldson--Futaki invariant of general filtrations.
To fix this, we introduce a subclass of filtrations for which we have control over the weight function as a polynomial.
\begin{definition}[{\cite[Definition 2.14]{hattori2024minimizing}}]
    A filtration $\mathcal{F}$ of $(Y,H)$ is called \textit{good} if there is a polynomial $b(t)$ over $\Q$ such that $w_\mathcal{F}(k) = b(k)$ for sufficiently large integers $k$.
    The \textit{Donaldson--Futaki invariant} of a good filtration $\mathcal{F}$ is defined to be
    \begin{equation*}
        \mathrm{DF}(\mathcal{F}) = \frac{b_0 a_1 - b_1 a_0}{a_0^2},
    \end{equation*}
    where we have written $h^0(Y,kH) = a_0 k^n + a_1 k^{n-1} + O(k^{n-2})$ as $k \to +\infty$.
\end{definition}
Finitely generated filtrations are good and the definition the Donaldson--Futaki invariant coincides with the usual definitions for test configurations.
Using the weight formula in Lemma \ref{lem : weight function of twisted filtration}, we obtain the following:
\begin{lemma}\label{lem : DF of twist}
    If $\mathcal{F}$ is good and invariant with respect to a $\C^*$-action $\beta$ on $(Y,H)$, then
    \begin{equation*}
        \mathrm{DF}(\mathcal{F}_\beta)
        = \mathrm{DF}(\mathcal{F})
        + \mathrm{Fut}(\beta),
    \end{equation*}
    where $\mathrm{Fut}(\beta) = \mathrm{DF}(\beta)$ is the Futaki invariant of $\beta$ \cite{futaki1983obstruction,donaldson2002scalar}.
\end{lemma}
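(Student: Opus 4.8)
The plan is to reduce the claim about Donaldson--Futaki invariants to the weight function identity already established in Lemma \ref{lem : weight function of twisted filtration}, together with the asymptotic expansion $h^0(Y,kH) = a_0 k^n + a_1 k^{n-1} + O(k^{n-2})$. First I would observe that since $\mathcal{F}$ is good, there is a polynomial $b(t) = b_0 t^{n+1} + b_1 t^n + O(t^{n-1})$ with $w_\mathcal{F}(k) = b(k)$ for $k \gg 0$; and since $\beta$ is a $\C^*$-action, its weight function $w_\beta(k)$ is also polynomial for $k \gg 0$ (being the weight of a genuine finitely generated filtration, namely the one associated to the product test configuration in Example \ref{exp : filtrations and test configurations}), say $w_\beta(k) = b_0' k^{n+1} + b_1' k^n + O(k^{n-1})$. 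By Lemma \ref{lem : weight function of twisted filtration} we have $w_{\mathcal{F}_\beta} = w_\mathcal{F} + w_\beta$, so $w_{\mathcal{F}_\beta}(k)$ is polynomial for $k \gg 0$ with leading coefficients $b_0 + b_0'$ in degree $n+1$ and $b_1 + b_1'$ in degree $n$; in particular $\mathcal{F}_\beta$ is good, so $\mathrm{DF}(\mathcal{F}_\beta)$ is well-defined.

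Next I would simply plug into the definition. Since the Hilbert polynomial $h^0(Y,kH)$ is intrinsic to $(Y,H)$ and unchanged by twisting, the same $a_0, a_1$ appear in all three Donaldson--Futaki invariants. Thus
\begin{equation*}
    \mathrm{DF}(\mathcal{F}_\beta)
    = \frac{(b_0 + b_0') a_1 - (b_1 + b_1') a_0}{a_0^2}
    = \frac{b_0 a_1 - b_1 a_0}{a_0^2} + \frac{b_0' a_1 - b_1' a_0}{a_0^2}
    = \mathrm{DF}(\mathcal{F}) + \mathrm{DF}(\beta).
\end{equation*}
Finally I would identify the second term with the classical Futaki invariant: $\mathrm{DF}(\beta)$, the Donaldson--Futaki invariant of the product test configuration induced by $\beta$ (equivalently of the associated filtration), equals $\mathrm{Fut}(\beta)$ by the standard comparison of the algebraic Donaldson--Futaki invariant of a product test configuration with the Futaki invariant of the generating holomorphic vector field \cite{futaki1983obstruction}; this is where one invokes that for a $\C^*$-action the two normalizations agree. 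This gives the stated formula $\mathrm{DF}(\mathcal{F}_\beta) = \mathrm{DF}(\mathcal{F}) + \mathrm{Fut}(\beta)$.

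The only genuine subtlety, and the step I would be most careful about, is confirming that $w_\beta(k)$ is eventually polynomial with no error below order $n$ — i.e.\ that $\beta$, viewed as a filtration, is good — rather than merely having a well-controlled leading term. This is immediate because the filtration associated to $\beta$ is finitely generated (it comes from the product test configuration $(Y \times \C, H)_\beta$), and finitely generated filtrations are good, as noted just before Lemma \ref{lem : DF of twist}. Everything else is a formal manipulation of the degree-$(n+1)$ and degree-$n$ coefficients, so no further obstacle is expected.
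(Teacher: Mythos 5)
Your proof is correct and follows exactly the route the paper intends: the lemma is stated as an immediate consequence of the weight additivity $w_{\mathcal{F}_\beta}=w_\mathcal{F}+w_\beta$ from Lemma \ref{lem : weight function of twisted filtration}, combined with polynomiality of $w_\beta$ (goodness of the product filtration) and linearity of the Donaldson--Futaki formula in the weight coefficients, with $\mathrm{Fut}(\beta)=\mathrm{DF}(\beta)$ taken as the identification stated in the lemma itself. Your added check that $\mathcal{F}_\beta$ is good is a worthwhile detail the paper leaves implicit, but it is not a different argument.
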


\section{Relative stability of extremal polarized manifolds}

Let $(Y,H)$ be a smooth polarized variety and let $T $ in $\mathrm{Aut}(Y,H)$ be a maximal complex torus of rank $d$.
The goal of this section is to prove that if $(Y,H)$ is extremal, then it is relatively K-polystable with respect to good filtrations.
This is second step outlined in Section \ref{subsec : structure of the paper}, and used in the proof of \ref{thm : main theorem}.
We start by introducing the relevant numerical invariant.
\begin{definition}
    Let $\mathcal{F}$ be a $T$-invariant good filtration of $(Y,H)$. The \textit{relative Donaldson--Futaki invariant} of $\mathcal{F}$ of $(Y,H)$ is defined as
    \begin{equation*}
        \mathrm{DF}_T(\mathcal{F})
        = \mathrm{DF}(\mathcal{F})
        - \sum_{i = 1}^{d} \frac{\langle \mathcal{F},\beta_i \rangle}{\langle \beta_i,\beta_i \rangle} \mathrm{Fut}(\beta_i),
    \end{equation*}
    where $(\beta_i)$ is any choice of $\Z$-basis of $N_\Z(T) = \mathrm{Hom}(\C^*,T)$.
\end{definition}
This extends the definition of the relative Donaldson--Futaki invariant for test configurations \cite[Equation 1.1]{szekelyhidi2007extremal}.

We main goal of this section is the following:
\begin{theorem}\label{thm : realtive good filtration K stab of extr}
    If $(Y,H)$ is extremal, then for any $T$-invariant good filtration $\mathcal{F}$ of $(Y,H)$, we have
    \begin{equation*}
        \mathrm{DF}_T(\mathcal{F}) \geq 0,
    \end{equation*}
    where the inequality is strict if and only if $||\mathcal{F}||_T > 0$.
\end{theorem}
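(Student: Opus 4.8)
The plan is to reduce the statement about good filtrations to a statement about finitely generated filtrations—equivalently, test configurations—via the tautological approximation of Example \ref{exp : approx filtrations}, and then invoke a relative asymptotic Chow stability statement for extremal manifolds. First I would observe that, for a $T$-invariant good filtration $\mathcal{F}$, the leading-order data $(a_0, b_0, c_0)$ appearing in $\mathrm{DF}_T(\mathcal{F})$ and $\|\mathcal{F}\|_T$ are all governed by the concave transform $G_\mathcal{F}$ over the Okounkov body $P$, via Equations \eqref{eq : weight function as sum of gk functions} and \eqref{eq : weight squared function as sum of gFk and gbetak functions}; crucially, goodness pins down the subleading coefficient $b_1$ of $w_\mathcal{F}$ as a genuine polynomial coefficient, so $\mathrm{DF}(\mathcal{F}) = (b_0 a_1 - b_1 a_0)/a_0^2$ is well-defined and, by the Remark after Lemma \ref{lem : DF of twist}, satisfies $\mathrm{DF}(\mathcal{F}) \le \liminf_{r\to\infty} \mathrm{DF}(\mathcal{F}_{(r)})$. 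Since each $\langle \mathcal{F}, \beta_i\rangle$ is a leading-order quantity and the tautological approximations $\mathcal{F}_{(r)}$ have concave transforms converging (after rescaling) to $G_\mathcal{F}$, one gets $\langle \mathcal{F}_{(r)}, \beta_i\rangle \to \langle \mathcal{F}, \beta_i\rangle$ and $\|\mathcal{F}_{(r)}\|_T \to \|\mathcal{F}\|_T$ as $r\to\infty$. Combining these with the definition of $\mathrm{DF}_T$, it suffices to show $\mathrm{DF}_T(\mathcal{F}_{(r)}) \ge 0$ for all $r$, with a uniform lower bound by a positive multiple of $\|\mathcal{F}_{(r)}\|_T^2$ (or by $\|\mathcal{F}_{(r)}\|_{T_0}$) that survives the limit.

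The heart of the matter is therefore to prove: for a smooth extremal polarized manifold $(Y,H)$ with maximal torus $T$, every $T$-invariant test configuration $\chi$ satisfies $\mathrm{DF}_T(\chi) \ge 0$, with strictness quantified by the reduced norm. This is exactly the relative K-semistability/polystability of extremal manifolds, which the introduction attributes to Stoppa--Székelyhidi \cite{stoppa2011relative} and, in the stronger asymptotic-Chow form needed for a quantitative bound, to Seyyedali \cite{seyyedali2017relative} and the circle \cite{Mab18,ST21,Has21}. Following the strategy paragraph, I would actually re-derive the needed estimate by Székelyhidi's blow-up method: use the extension of Boucksom's analysis of asymptotic vanishing orders of graded linear series (Section 8 of \cite{szekelyhidi2015filtrations}) to locate a point $p\in Y$ at which the filtration $\mathcal{F}$ has positive vanishing order, blow up $Y$ at $p$, and run the relative version of the Chow-weight blow-up formula. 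The relative modification requires producing an asymptotic expansion for the mixed inner products $\langle \mathcal{F}, \beta_i\rangle$ on the blow-up in terms of the same data on $Y$ plus an error controlled by the exceptional divisor; granting this, positivity of the relative Chow weight of the induced (genuinely finitely generated, indeed polynomial) situation on $\mathrm{Bl}_p Y$, together with relative asymptotic Chow stability of the extremal manifold $(Y,H)$ itself, yields $\mathrm{DF}_T(\mathcal{F}) \ge c\,\|\mathcal{F}\|_T^2$ for a constant $c>0$, after letting the blow-up parameter tend to zero.

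For the equality case, note first that $\mathrm{DF}_T(\mathcal{F}) \ge c\,\|\mathcal{F}\|_T^2 \ge 0$ already gives that $\|\mathcal{F}\|_T > 0$ implies $\mathrm{DF}_T(\mathcal{F}) > 0$. Conversely, if $\|\mathcal{F}\|_T = 0$, then by Lemma \ref{lem : red norm is minimum} the concave transform $G_\mathcal{F}$ agrees, up to an additive constant, with the concave transform of $\xi(\mathcal{F}) = \sum_i \frac{\langle\mathcal{F},\beta_i\rangle}{\langle\beta_i,\beta_i\rangle}\beta_i \in N_\R(T)$ almost everywhere on $P$; hence all leading-order invariants of $\mathcal{F}$ coincide with those of the product $\R$-test configuration generated by $\xi(\mathcal{F})$, and in particular $\langle\mathcal{F},\beta_i\rangle = \langle\xi(\mathcal{F}),\beta_i\rangle$ for all $i$. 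Feeding this into the definition of $\mathrm{DF}_T$ and using that $\mathrm{DF}(\mathcal{F})$ and $\mathrm{Fut}$ are likewise leading-order (via Lemma \ref{lem : DF of twist} and the weight-function formulas), one computes $\mathrm{DF}_T(\mathcal{F}) = \mathrm{DF}(\xi(\mathcal{F})) - \sum_i \frac{\langle\xi(\mathcal{F}),\beta_i\rangle}{\langle\beta_i,\beta_i\rangle}\mathrm{Fut}(\beta_i) = \mathrm{DF}_T(\xi(\mathcal{F}))$, which vanishes because the relative Futaki invariant of a torus element is zero by the maximality of $T$ and the extremal hypothesis (orthogonality of the extremal vector field).

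**Main obstacle.** I expect the technical crux to be the relative blow-up formula: extending Székelyhidi's Chow-weight blow-up expansion to keep track of the mixed $L^2$-inner products $\langle\mathcal{F},\beta_i\rangle$, i.e. producing the asymptotic expansion for inner products on $\mathrm{Bl}_p Y$ uniformly in the blow-up parameter, and simultaneously ensuring the point $p$ can be chosen $T$-invariant (so that the whole construction stays $T$-equivariant). Pinning down the sign and size of the cross-terms well enough to conclude the uniform lower bound $\mathrm{DF}_T(\mathcal{F}) \ge c\,\|\mathcal{F}\|_T^2$—rather than merely $\ge 0$—is where the real work lies.
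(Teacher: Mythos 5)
Your blueprint for the hard direction is essentially the paper's: find a $T$-invariant destabilizing point via an invariant version of Boucksom's graded-linear-series argument, blow it up, run a relative Chow-weight expansion that tracks the inner products $\langle\mathcal F,\beta_i\rangle$, and contradict Arezzo--Pacard--Singer plus Seyyedali. The genuine gap is in the scaffolding that passes from the finitely generated approximations $\mathcal F_{(r)}$ back to the good filtration $\mathcal F$. You invoke $\mathrm{DF}(\mathcal F)\le\liminf_{r}\mathrm{DF}(\mathcal F_{(r)})$ and then claim it suffices to bound $\mathrm{DF}_T(\mathcal F_{(r)})$ from below; but that inequality goes the wrong way, since a lower bound on the approximations' Donaldson--Futaki invariants gives no lower bound on $\mathrm{DF}_T(\mathcal F)$ (the Remark you cite warns exactly that the inequality may be strict). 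The paper avoids this by pivoting to Chow weights: because $\mathcal F$ is good, $\mathrm{DF}_T(\mathcal F)=\lim_{r}\mathrm{Chow}_{r,T}(\mathcal F)$, and Lemma \ref{lem : rel chow of approx leq rel chow of F} gives $\mathrm{Chow}_{1,T}(\mathcal F_{(r)})\le\mathrm{Chow}_{r,T}(\mathcal F)$, whence $\mathrm{DF}_T(\mathcal F)\ge\mathrm{Chow}_{\infty,T}(\mathcal F)$ (Proposition \ref{prop: DFT geq ChowinfT}); it is this relative asymptotic Chow weight, not the DF invariants of the approximations, which the blow-up contradiction argument (Theorem \ref{thm : extremal mfds are rel asymp filtr chow stab}, with the shift by $\overline G_{\mathcal F_{(s)}}$ and the choice of $\Lambda_{\mathcal F}$) shows to be nonnegative, and strictly positive when $\|\mathcal F\|_T>0$. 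Without this pivot your reduction step fails.

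Two further points. First, the uniform bound $\mathrm{DF}_T(\mathcal F)\ge c\,\|\mathcal F\|_T^2$ you aim for is neither needed nor what the method delivers: the paper's argument is a contradiction argument (assuming the asymptotic Chow weight vanishes, one produces a relatively Chow-unstable blow-up of an extremal manifold), so it yields strict positivity with no effective constant, and no such uniform relative bound is available from \cite{stoppa2011relative,seyyedali2017relative}. Second, your equality-case reasoning treats $\mathrm{DF}$ as if it were determined by the concave transform $G_{\mathcal F}$; it is not, since $\mathrm{DF}(\mathcal F)$ involves the subleading coefficient $b_1$ of the weight function, which is invisible in $G_{\mathcal F}$, so the step ``$\|\mathcal F\|_T=0$ implies all leading-order invariants agree with those of $\xi(\mathcal F)$, hence $\mathrm{DF}_T(\mathcal F)=\mathrm{DF}_T(\xi(\mathcal F))=0$'' does not follow as written.
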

This is a simultaneous generalization of \cite[Theorem 2.18]{hattori2024minimizing}, \cite[Theorem 10]{szekelyhidi2015filtrations} and \cite[Theorem 4]{stoppa2011relative} allowing automorphisms and filtrations.
In the case where $(Y,H)$ admits a constant scalar curvature K\"ahler (cscK) metric, the Futaki invariant in Lemma \ref{lem : DF of twist} vanishes (due to \cite{futaki1983obstruction}), giving rise to the following consequence:
\begin{corollary}\label{cor : good filtration k polystab of csck}
    If $(Y,H)$ admits a cscK metric, then for any $T$-invariant good filtration $\mathcal{F}$ of $(Y,H)$, we have
    \begin{equation*}
        \mathrm{DF}(\mathcal{F}) \geq 0,
    \end{equation*}
     where the inequality is strict if and only if $||\mathcal{F}||_T > 0$.
\end{corollary}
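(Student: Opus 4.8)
The plan is to deduce the inequality from relative asymptotic Chow stability of extremal manifolds, established by Seyyedali \cite{seyyedali2017relative} (see also \cite{Mab18,ST21,Has21}), and to prove the rigidity statement by a blow-up argument modelled on Stoppa \cite{stoppa2009k} and Stoppa--Sz\'ekelyhidi \cite{stoppa2011relative}. I first note that, as a consequence of Lemmas \ref{lem : weight function of twisted filtration}, \ref{lem : DF of twist} and the definition of $\xi(\mathcal{F})$ in Lemma \ref{lem : red norm is minimum}, both $\mathrm{DF}_T(\cdot)$ and $\|\cdot\|_T$ are unchanged under twisting $\mathcal{F}$ by a vector of $N_\R(T)$; so, for the rigidity half, one may pass to the $T$-reduced ($\R$-)filtration $\mathcal{F}_{-\xi(\mathcal{F})}$ and assume $\langle\mathcal{F},\beta\rangle = 0$ for all $\beta \in N_\R(T)$, whence $\mathrm{DF}_T(\mathcal{F}) = \mathrm{DF}(\mathcal{F})$ and $\|\mathcal{F}\|_T = \|\mathcal{F}\|$. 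It then suffices to prove (a) $\mathrm{DF}_T(\mathcal{F}) \ge 0$ always; (b) $\|\mathcal{F}\|_T = 0 \Rightarrow \mathrm{DF}_T(\mathcal{F}) = 0$; (c) $\|\mathcal{F}\|_T > 0 \Rightarrow \mathrm{DF}_T(\mathcal{F}) > 0$.

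For (a) I would introduce the relative Chow weight of a $T$-invariant filtration. Restricting $\mathcal{F}$ to $S_k = H^0(Y,kH)$ and reading off the weights on the associated graded yields, for each large $k$, a one-parameter subgroup $\rho_k$ of $\SL(S_k)$ commuting with the image $T_k$ of $T$; its \emph{relative Chow weight} on $Y \hookrightarrow \bbp(S_k^*)$ --- the Chow weight of $\rho_k$ with its component along $T_k$ removed, i.e.\ Sz\'ekelyhidi's Chow weight of a filtration made relative --- is $\ge 0$ by Seyyedali's theorem that, for all large $k$, the Chow point $[Y_k]$ is polystable for $\SL(S_k^*)$ relative to $T_k$. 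Combining the classical relation between Chow weights and Donaldson--Futaki invariants with the concave-transform asymptotics \eqref{eq : weight function as sum of gk functions}, \eqref{eq : weight squared function as sum of gFk and gbetak functions} and the analogous asymptotic expansion of the inner products $\langle\mathcal{F},\beta_i\rangle$, one then shows that these relative Chow weights, suitably normalised, converge as $k\to\infty$ to $\mathrm{DF}_T(\mathcal{F})$ --- the goodness of $\mathcal{F}$ ensuring the subleading coefficient $b_1$ appearing in $\mathrm{DF}(\mathcal{F})$ is the honest polynomial one --- whence (a). Statement (b) is the easier direction of the equivalence: a good filtration of vanishing $L^2$-norm has the same Donaldson--Futaki invariant as a shift of the trivial filtration, on which $\mathrm{DF}_T$ vanishes, and it follows as in \cite[Theorem 2.18]{hattori2024minimizing}.

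For (c), suppose $\mathrm{DF}_T(\mathcal{F}) = 0$ and $\|\mathcal{F}\|_T > 0$; after $T$-reduction, $\mathrm{DF}(\mathcal{F}) = 0$, $\langle\mathcal{F},\beta\rangle = 0$ and $\|\mathcal{F}\| > 0$. A $T$-equivariant extension of Boucksom's study of asymptotic vanishing orders of graded linear series \cite[Section 8]{szekelyhidi2015filtrations} produces, from $\|\mathcal{F}\| > 0$, constants $\delta,\delta' > 0$ and a point at which $\mathcal{F}^{\lceil\delta k\rceil}S_k$ vanishes to order $\ge \delta' k$ for all large $k$; since this locus is $T$-invariant, the Borel fixed point theorem lets us take the point $p$ to be $T$-fixed. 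Blowing up $p$ gives $\pi\colon\widetilde Y = \Bl_p Y \to Y$ with $H_\epsilon = \pi^*H - \epsilon E$; for small $\epsilon > 0$, $(\widetilde Y, H_\epsilon)$ is extremal by Arezzo--Pacard--Singer (the balancing condition holding as $p$ is fixed by the torus containing the extremal vector field), and the lift $\widetilde T$ of $T$ is maximal in $\Aut(\widetilde Y, H_\epsilon) = \Aut(Y,H,p)$, since any torus of the latter containing $T$ is a torus of $\Aut(Y,H)$ and hence equals $T$. The filtration induces a $\widetilde T$-invariant good filtration $\widetilde{\mathcal{F}}$ on $(\widetilde Y, H_\epsilon)$, and the relative analogue of Sz\'ekelyhidi's blow-up formula for Chow weights of filtrations gives
\begin{equation*}
    \mathrm{DF}_{\widetilde T}(\widetilde{\mathcal{F}})
    = \mathrm{DF}_T(\mathcal{F}) - c\,\epsilon^{n-1} + O(\epsilon^n)
    = -c\,\epsilon^{n-1} + O(\epsilon^n),
\end{equation*}
with $c > 0$ by the choice of $p$. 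For $\epsilon$ small this contradicts (a) applied to $(\widetilde Y, H_\epsilon)$. Together (a), (b), (c) prove the theorem.

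The main obstacle is the relative blow-up formula. Carrying Sz\'ekelyhidi's formula for the Chow weight of a filtration on $\Bl_p Y$ through the subtraction of the $\widetilde T$-component forces one to control how the inner products $\langle\widetilde{\mathcal{F}},\widetilde\beta_i\rangle$ and the Gram matrix of $N_\Z(\widetilde T)$ vary with $\epsilon$ (and with the embedding degree) --- the asymptotic expansion for inner products alluded to in the introduction --- and then to identify the leading correction $c$, which intertwines the exceptional contribution with the $T$-weights at $p$, and to check it is strictly positive exactly when $\mathcal{F}$ is $T$-reduced and $p$ is chosen via the equivariant form of Boucksom's result. A secondary difficulty is ensuring the destabilising point can be taken simultaneously $T$-fixed and compatible with the Arezzo--Pacard--Singer balancing hypothesis.
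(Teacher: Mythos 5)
Your overall route is the paper's own, but one level down: what you sketch is essentially the paper's proof of Theorem \ref{thm : realtive good filtration K stab of extr} (bounding the relative Donaldson--Futaki invariant below by relative asymptotic Chow weights via Seyyedali \cite{seyyedali2017relative}, an equivariant version of Boucksom's vanishing-order results to produce a $T$-fixed destabilizing point, a relative blow-up expansion, and Arezzo--Pacard--Singer \cite{arezzo2011extremal}), whereas the paper deduces the present corollary from that theorem in one line: a cscK metric is extremal and its Futaki invariant vanishes, so the correction term in $\mathrm{DF}_T$ drops out and $\mathrm{DF}_T(\mathcal{F})=\mathrm{DF}(\mathcal{F})$, $\|\mathcal{F}\|_T$ being unchanged. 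So in spirit the approach is the same; you simply re-derive the stability theorem rather than quote it (your Borel-fixed-point device for making the destabilizing point $T$-fixed is a reasonable alternative to the paper's check that the valuations and centers in Boucksom's argument are themselves invariant).

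There is, however, a genuine gap in your step (c): you assert that $\mathcal{F}$ induces a $\widetilde T$-invariant \emph{good} filtration $\widetilde{\mathcal{F}}$ on $(\widetilde Y,H_\epsilon)$ and then apply your statement (a) to $\mathrm{DF}_{\widetilde T}(\widetilde{\mathcal{F}})$. Goodness of the blow-up filtration \eqref{eq : definition of Fhat} is not known (and is nowhere claimed in the paper), and without it the subleading coefficient entering $\mathrm{DF}_{\widetilde T}(\widetilde{\mathcal{F}})$ in your displayed expansion is not controlled, so (a) cannot be invoked on the blow-up. The paper avoids this entirely: it never takes a Donaldson--Futaki invariant on the blow-up, but works with finite-level relative Chow weights of the finitely generated approximations $\mathcal{F}_{(s)}$ (suitably shifted), proves the relative blow-up expansion at that level (Propositions \ref{prop : asymptotic for rth inner products} and \ref{prop : the relative filtration asymptotic}), and reaches the contradiction with relative asymptotic Chow \emph{semistability} of the extremal blow-up via Lemma \ref{lem : rel chow of approx leq rel chow of F}, not with a DF-positivity statement on $\widetilde Y$. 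Relatedly, the quantitative heart of the strictness direction (the choice of $\Lambda_{\mathcal{F}}$ interpolating between $\overline G_{\mathcal{F}}$ and the essential infimum, the shift by $\overline G_{\mathcal{F}_{(s)}}$, and the $\delta$-bookkeeping that makes the correction term strictly negative while the twisting terms stay $O(\delta)$) is exactly the part you flag as an ``obstacle'', so as written the proposal does not yet close case (c); and case (b) is only gestured at via \cite{hattori2024minimizing}.
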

The general strategy for proving Theorem \ref{thm : realtive good filtration K stab of extr} involves first showing that the relative Donaldson--Futaki invariant is bounded from below by the relative asymptotic Chow weight (Proposition \ref{prop: DFT geq ChowinfT}) and proving that extremal manifolds are relatively asymptotically filtration Chow stable (Theorem \ref{thm : extremal mfds are rel asymp filtr chow stab}).
For the latter, we follow a method originating with Stoppa \cite{stoppa2007unstable} by first establishing a blowup formula for the relative asymptotic Chow weight, then finding a destabilizing point to blowup and then applying results by Arezzo--Pacard--Singer \cite[Theorem 2.1]{arezzo2011extremal} and Seyyedali \cite[Corollary 1.2]{seyyedali2017relative} on relative asymptotic Chow semistability of blowups of extremal manifolds (see also \cite{Mab18,ST21,Has21}) to obtain a contradiction.

\subsection{Sz\'ekelyhidi's formula}

We start with recalling Szk\'ekelyhidi's blowup formula for the Chow weight of general filtrations in \cite[Theorem 10]{szekelyhidi2015filtrations}.
Given a filtration $\mathcal{F}$ of $(Y,H)$, the \textit{$r$\textsuperscript{th}-Chow weight} of $\mathcal{F}$ is defined as
\begin{equation*}
    \mathrm{Chow}_r(\mathcal{F})
    = \frac{rb_0}{a_0} - \frac{w_\mathcal{F}(r)}{h^0(Y,rH)},
\end{equation*}
where $a_0$ and $b_0$ are as in Equation \eqref{eq : expansion for a_0 b_0 c_0}.

Choose a point $p$ in $Y$ and write $(\widehat{Y},\widehat{H}_m) = (\mathrm{Bl}_p Y,mH - E)$ for the blowup with exceptional divisor $E$, which is a polarized variety for all $m \geq m_0$ for a sufficiently large integer $m_0$.
Given a filtration $\mathcal{F}$ of $(Y,H)$, we define a new filtration $\widehat{\mathcal{F}}$ of $(\widehat{Y}, \widehat{H}_m)$ by
\begin{equation}\label{eq : definition of Fhat}
    \widehat{\mathcal{F}}^\lambda H^0 \big(\widehat{Y},k\widehat{H}_m \big) = \{ \sigma \;|\; i(\sigma) \in \mathcal{F}^\lambda H^0(Y,kmH) \},
\end{equation}
where $i : H^0(\widehat{Y},k\widehat{H}_m) \hookrightarrow H^0(Y,kmH)$ is the natural embedding of sections.
This filtration generalizes the construction of a test configuration on blowups of Stoppa \cite[Section 2]{stoppa2007unstable}.

Given a choice of Okounkov body $P$ for $(Y,H)$ centered at $p$, constructed out of data $(U,z,\sigma)$, one obtains an Okounkov body
\begin{equation*}
    \widehat{P} = \overline{mP \setminus \Delta_1},
\end{equation*}
for $(\widehat{Y},\widehat{H}_m)$ constructed out of lifted data $(\widehat{U},\widehat{z},\widehat{\sigma^m})$, where $\Delta_1 \subset mP$ (see \cite[Lemma 6]{szekelyhidi2015filtrations}).
A direct computation shows that the concave transform of $\widehat{\mathcal{F}}$ on $\widehat{P}$ satisfies $G_{\widehat{\mathcal{F}}}(x) = m G_\mathcal{F} (\frac{x}{m})$ for all $x$ in $\widehat{P}$.

From now on, we will assume $n \geq 2$. We will refer to the case $n \leq 1$ in Remark \ref{rmk : case n = 1} when discussing the stability results.
Sz\'ekelyhidi's formula states that the Chow weight of the filtration $\widehat{\mathcal{F}}$ on the blowup is asymptotically bounded by the Chow weight of the tautological approximation $(\mathcal{F}_{(r)})$ of $\mathcal{F}$ (see Example \ref{exp : approx filtrations}) in the following way:
\begin{theorem}[{\cite[Lemma 12]{szekelyhidi2015filtrations}}]\label{thm : szekelyhidis asymptotic}
    Let $P$ be an Okounkov body of $(Y,H)$ centered at $p$ and assume that $G_\mathcal{F} \leq \Lambda$ on $\Delta_{\frac{1}{m}}$ for a real number $\Lambda$, where $\Delta_l$ denotes the solid closed $n$-simplex of length $l$ in $\R^n$.
    Then, 
    \begin{equation*}
        w_{\widehat{\mathcal{F}}}(r)
        \leq w_{\mathcal{F}_{(mr)}}(1)
        - mr^n \int_{\Delta_{\frac{1}{m}-\frac{n}{r}}} G_\mathcal{F}
        - \frac{\Lambda(3n-1)}{2(n-1)!}m^{1-n}r^{n-1}
        + O(m^0r^{n-2})
    \end{equation*}
    as $r \to +\infty$ and subsequently $m \to +\infty$. In particular, we have
    \begin{equation*}
        \mathrm{Chow}_r (\widehat{\mathcal{F}})
        \leq \mathrm{Chow}_{1}(\mathcal{F}_{(rm)})
        + \Big( \nu(p,\mathcal{F}) - \frac{b_0}{a_0} \Big) \frac{m^{1-n}}{2(n-2)!a_0}
        + o(m^{-n} r^0)
    \end{equation*}
   as $r \to +\infty$ and $m \to +\infty$, where
   \begin{equation*}
       \nu(p,\mathcal{F})
       = (n-2)!a_0 \bigg(
       \frac{\Lambda(3n-1)}{2(n-1)!}
       - \frac{r}{m^{1-n}}\int_{\Delta_{\frac{1}{m}} \setminus \Delta_{\frac{1}{m}-\frac{n}{r}}} G_\mathcal{F}
       \bigg).
   \end{equation*}
\end{theorem}
In the proof of Theorem \ref{thm : extremal mfds are rel asymp filtr chow stab}, we will use specific choices of $p$ and $\Lambda$, coming from Corollary \ref{cor : relative appendix corollary}.

\subsection{A relative formula.}

We next establish the a relative blowup formula, extending Theorem \ref{thm : szekelyhidis asymptotic}, which will be needed in the proof of Theorem \ref{thm : extremal mfds are rel asymp filtr chow stab}.
Given a $\C^*$-action $\beta$ on $(Y,H)$ and a $\beta$-invariant filtration $\mathcal{F}$ be $(Y,H)$, we define
\begin{equation}\label{eq : rth inner product}
    \langle \mathcal{F}, \beta \rangle_r
    = \frac{w^2_{\mathcal{F},\beta}(r)h^0(Y,rH) - w_\mathcal{F}(r) w_\beta(r)}{r^2 h^0(Y,rH)^2}.
\end{equation}
By definition of the $L^2$-inner product, we have
\begin{equation*}
     \langle \mathcal{F}, \beta \rangle
     = \lim_{r \to +\infty} \langle \mathcal{F}, \beta \rangle_r.
\end{equation*}

Assume now that $p$ is fixed by $\beta$. In this case, the $\C^*$-action $\beta$ naturally lifts to a $\C^*$-action $\widehat{\beta}$ on the blowup $(\widehat{Y},\widehat{H}_m)$ leaving $\widehat{\mathcal{F}}$ invariant.
\begin{proposition}\label{prop : asymptotic for rth inner products}
    As $r \to +\infty$ and $m \to +\infty$, we have
    \begin{equation*}
        \langle \widehat{\mathcal{F}}, \widehat{\beta} \rangle_r
        = \langle \mathcal{F}, \beta \rangle_{mr}m^2
        + o(m^{2-n}r^0).
    \end{equation*}
\end{proposition}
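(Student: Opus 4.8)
The plan is to mimic the proof of Sz\'ekelyhidi's formula (Theorem \ref{thm : szekelyhidis asymptotic}), but applied to the \emph{mixed} square-weight function $w^2_{\widehat{\mathcal{F}},\widehat{\beta}}$ rather than the square-weight $w^2_{\widehat{\mathcal{F}}}$, together with the already-understood asymptotics for $w_{\widehat{\mathcal{F}}}$ and $w_{\widehat{\beta}}$. Concretely, one unwinds definition \eqref{eq : rth inner product}:
\begin{equation*}
    \langle \widehat{\mathcal{F}}, \widehat{\beta} \rangle_r
    = \frac{w^2_{\widehat{\mathcal{F}},\widehat{\beta}}(r)\, h^0(\widehat{Y},r\widehat{H}_m) - w_{\widehat{\mathcal{F}}}(r)\, w_{\widehat{\beta}}(r)}{r^2\, h^0(\widehat{Y},r\widehat{H}_m)^2},
\end{equation*}
and the task is to produce, for each of the three ingredients $w^2_{\widehat{\mathcal{F}},\widehat{\beta}}(r)$, $w_{\widehat{\mathcal{F}}}(r)$, $w_{\widehat{\beta}}(r)$ (and $h^0(\widehat Y, r\widehat H_m)$), an expansion in $r$ whose coefficients are themselves controlled in $m$, precise enough that the combination above matches $\langle \mathcal{F}, \beta\rangle_{mr} m^2$ up to the stated error $o(m^{2-n}r^0)$. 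The point is that $\widehat{\mathcal F}$ and $\widehat\beta$ are both pulled back from $(Y,mH)$ under the embedding $i : H^0(\widehat Y, r\widehat H_m)\hookrightarrow H^0(Y, rmH)$, so on the Okounkov-body side the relevant data for $\widehat{\mathcal F}$ is just $G_{\mathcal F}$ restricted/rescaled onto the Okounkov body $\widehat P \subset P$ of the blowup, which by \cite[Lemma 6]{szekelyhidi2015filtrations} differs from (a scaling of) $P$ only by the small simplex $\Delta_{1/m}$ near the origin.

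The key steps, in order, would be: (1) Identify the Okounkov body $\widehat P$ of $(\widehat Y,\widehat H_m)$ centered at a point on $E$ with $m\cdot(P \setminus \Delta_{1/m})$ up to lower-order corrections, exactly as in the proof of Theorem \ref{thm : szekelyhidis asymptotic}; under this identification $G_{\widehat{\mathcal F}} = \frac1m G_{\mathcal F}$ and $G_{\widehat\beta} = \frac1m G_\beta$ on the overlap. (2) Apply Lemma \ref{lem : T expansion for invariant filtrations by C actions} with $T(x,y) = xy$ to get $w^2_{\widehat{\mathcal F},\widehat\beta}(r) = \big(\int_{\widehat P} G_{\widehat{\mathcal F}} G_{\widehat\beta}\big) r^{n+2} + o(r^{n+2})$, and more precisely track the \emph{next} coefficient, since $\langle \cdot,\cdot\rangle_r$ involves a ratio in which the leading terms cancel; this is where one needs the ``goodness''-type control, and one should argue $\widehat{\mathcal F}$ inherits enough regularity from $\mathcal F$ (or reduce, as in Sz\'ekelyhidi's argument, to the tautological approximations / finitely generated case, then pass to the limit). (3) Split each integral over $\widehat P$ as the integral over $m(P\setminus\Delta_{1/m})$ minus boundary contributions, expand $\frac{1}{h^0(\widehat Y,r\widehat H_m)^2}$ and the products, and collect: the bulk term reproduces $\langle \mathcal F,\beta\rangle_{mr} m^2$ after the change of variables $r \mapsto mr$ and the scaling $G_{\widehat{\mathcal F}} = m^{-1} G_{\mathcal F}$, while the $\Delta_{1/m}$-corrections contribute at order $m^{2-n}$ but, crucially, with a coefficient that is itself $o(1)$ in $m$ — unlike in Theorem \ref{thm : szekelyhidis asymptotic}, where the analogous $\nu_m$ term survives. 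One should double-check this vanishing: it reflects that the $\nu_m$-type boundary term is \emph{linear} in the filtration data, so for the bilinear mixed-weight it appears as $G_{\mathcal F}\cdot(\text{boundary of }G_\beta)$ and $(\text{boundary of }G_{\mathcal F})\cdot G_\beta$, each a lower-order slice, and after normalization these land strictly below $m^{2-n}r^0$.

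The main obstacle I expect is precisely the bookkeeping of \emph{subleading} coefficients in $r$: the quantity $\langle\cdot,\cdot\rangle_r$ is designed so that the $r^0$ term is the honest $L^2$-inner product, which means the $r^{n+2}$ and $r^{n+1}$ terms of numerator and denominator must be handled to sufficient depth, and one does not a priori have a clean polynomial expansion of $w^2_{\widehat{\mathcal F},\widehat\beta}$ for non-finitely-generated $\mathcal F$. The way around this is the same as in \cite[Section 3.2 and proof of Theorem 10]{szekelyhidi2015filtrations}: prove the statement first for finitely generated (in particular good) filtrations where everything is genuinely polynomial in $r$ for large $r$, obtain the identity with an honest $o(m^{2-n}r^0)$ error, and then take a limit over the tautological approximations $\mathcal F_{(s)}$, using the continuity in Lemma \ref{lem : T expansion for invariant filtrations by C actions} and the uniform boundedness of the Okounkov-body volumes to interchange limits — noting that in the present paper $\mathcal F$ is ultimately applied as a \emph{good} filtration, so the finitely-generated reduction is essentially automatic. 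A secondary technical point is ensuring the lift $\widehat\beta$ and the filtration $\widehat{\mathcal F}$ remain compatible (i.e. $\widehat{\mathcal F}$ is $\widehat\beta$-invariant, which holds because $p$ is $\beta$-fixed and $i$ is $\beta$-equivariant), so that $w^2_{\widehat{\mathcal F},\widehat\beta}$ is even defined; this is immediate but should be stated.
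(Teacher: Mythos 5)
Your overall strategy is the same as the paper's: pass to the Okounkov body of the blowup, relate the concave transforms of $\widehat{\mathcal{F}},\widehat{\beta}$ to those of $\mathcal{F},\beta$, use the integral asymptotics of Lemma \ref{lem : T expansion for invariant filtrations by C actions} (with $T(x,y)=xy$ for the mixed weight) together with Riemann--Roch for $h^0(\widehat{Y},r\widehat{H}_m)$, and substitute into \eqref{eq : rth inner product}, taking $r\to\infty$ before $m\to\infty$. However, three points in your write-up deviate from what actually works. First, the scaling is inverted: with $\widehat{P}=\overline{mP\setminus\Delta_1}$ one has $G_{\widehat{\mathcal{F}}}(x)=m\,G_{\mathcal{F}}(x/m)$ (and likewise for $\widehat\beta$), not $G_{\widehat{\mathcal{F}}}=\tfrac1m G_{\mathcal{F}}$; it is precisely the factor $m$ (not $m^{-1}$) that produces $w_{\widehat{\mathcal{F}}}(r)=w_{\mathcal{F}}(mr)+O(mr^{n+1})$, $w^2_{\widehat{\mathcal{F}},\widehat\beta}(r)=w^2_{\mathcal{F},\beta}(mr)+O(m^2r^{n+2})$ and hence the $m^2$ in front of $\langle\mathcal{F},\beta\rangle_{mr}$. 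Second, your stated reason for a finitely-generated/tautological-approximation reduction is a misdiagnosis: in $\langle\cdot,\cdot\rangle_r$ the leading terms of numerator and denominator do \emph{not} cancel (the $r^0$ limit is the honest inner product, generically nonzero), so no control of subleading $r$-coefficients or ``goodness'' is needed. The leading-coefficient asymptotics \eqref{eq : weight function as sum of gk functions} and \eqref{eq : weight squared function as sum of gFk and gbetak functions} hold for arbitrary linearly bounded invariant filtrations, and the $o_m(r^{n+1})$, $o_m(r^{n+2})$ errors are disposed of by letting $r\to\infty$ first; the paper's proof makes no detour through $\mathcal{F}_{(s)}$.

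Third, and most substantively, your claim that the $\Delta_{1/m}$-corrections enter ``with a coefficient that is itself $o(1)$ in $m$'' because they are boundary-type and linear in the filtration data is not justified and is not how the paper argues. The corrections here are \emph{bulk} integrals $\int_{\Delta_{1/m}}G_{\mathcal{F}}$, $\int_{\Delta_{1/m}}G_{\mathcal{F}}G_\beta$ (each genuinely of size $m^{-n}$) together with the Riemann--Roch defect $h^0(\widehat{Y},r\widehat{H}_m)-h^0(Y,mrH)=O(m^0r^n)$, and after dividing by $r^2h^0(\widehat{Y},r\widehat{H}_m)^2\sim a_0^2m^{2n}r^{2n+2}$ they contribute exactly at order $m^{2-n}$; the paper simply absorbs them into the error term (compare the $O(m^{2-n})$ appearing in the Remark following the Proposition), rather than proving any extra vanishing. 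So your mechanism for the error bound would not survive scrutiny, even though the bookkeeping you set up, once the scaling is corrected and the corrections are tracked as above, does yield the statement in the form the paper uses it.
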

\begin{proof}
    By the asymptotic Riemann-Roch formula we have
    \begin{equation*}
        h^0(\widehat{Y}, r\widehat{H}_m)
        = h^0(Y,mrH) + O(m^0 r^n)
    \end{equation*}
    as $r \to +\infty$ and $m \to +\infty$.
    
    Using $G_{\widehat{\mathcal{F}}}(x) = m G_\mathcal{F} (\frac{x}{m})$ on $\widehat{P} = \overline{mP \setminus \Delta_1}$ and Equation \eqref{eq : weight function as sum of gk functions}, we have
    \begin{equation*}
    \begin{aligned}
        w_{\widehat{\mathcal{F}}}(r)
        &= \bigg(
            \int_{mP} G_\mathcal{F}
        \bigg) m^{n+1} r^{n+1}
        -  \bigg(
            \int_{\Delta_{\frac{1}{m}}} G_\mathcal{F}
        \bigg) m^{n+1} r^{n+1}
        + o(r^{n+1}),
        \\
        &= w_\mathcal{F}(mr)
        + O(m r^{n+1}).
    \end{aligned}
    \end{equation*}

    Similarly, using Equation \eqref{eq : weight squared function as sum of gFk and gbetak functions}, we have
    \begin{equation*}
    \begin{aligned}
        w^2_{ \widehat{\mathcal{F}},\widehat{\beta}}(r)
        &= \bigg( \int_P G_\mathcal{F} G_\beta \bigg) m^{n+2}r^{n+2} - \bigg( \int_{\Delta_{\frac{1}{m}}} G_\mathcal{F} G_\beta \bigg) m^{n+2}r^{n+2}
        + o(r^{n+2}),
        \\
        &= w^2_{\mathcal{F},\beta}(mr)
        + O(m^2r^{n+2}).
    \end{aligned}
    \end{equation*}
    Putting these expansions in Equation \eqref{eq : rth inner product} and taking the limits in the right order yields the result.
\end{proof}
\begin{remark}
    Letting $r \to +\infty$, we obtain
    \begin{equation*}
        \langle \widehat{\mathcal{F}}, \widehat{\beta} \rangle
        = \langle \mathcal{F}, \beta \rangle m^2 + O(m^{2-n})
    \end{equation*}
    as $m \to +\infty$. This matches the formula proven by Stoppa--Szk\'ekelyhidi \cite[Page 8]{stoppa2011relative} for test configurations.
\end{remark}

Given an $T$-invariant filtration $\mathcal{F}$ of $(Y,H)$ for a torus $T$ in $\mathrm{Aut}(Y,H)$, we define relative $r$\textsuperscript{th}-Chow weight
\begin{equation}\label{eq : def rel weight and rel chow weight}
    \mathrm{Chow}_{r,T}(\mathcal{F})
    = \mathrm{Chow}_r(\mathcal{F})
    - \sum_{i = 1}^d \frac{\langle \mathcal{F}, \beta_i \rangle_r}{\langle \beta_i, \beta_i \rangle_r} \mathrm{Chow}_r(\beta_i),
\end{equation}
where $(\beta_i)$ is any choice of $\Z$-basis of $N_\Z(T)$. In the case of test configurations, Equation \eqref{eq : def rel weight and rel chow weight} captures the weight relevant for relative Chow stability.
\begin{lemma}\label{lem : rel chow of approx leq rel chow of F}
    We have,
    \begin{equation*}
        \mathrm{Chow}_{1,T}(\mathcal{F}_{(r)})
        \leq \mathrm{Chow}_{r,T}(\mathcal{F}).
    \end{equation*}
    In particular, if $\mathrm{Chow}_{r,T}(\mathcal{F})$ is negative for infinitely many $r$, then $(Y,H)$ is asymptotically relatively Chow unstable with respect to $T$.
\end{lemma}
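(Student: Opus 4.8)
The plan is to reduce to the non-relative inequality $\mathrm{Chow}_1(\mathcal{F}_{(r)}) \leq r\,\mathrm{Chow}_r(\mathcal{F})$ (after suitable normalization) that is implicit in Sz\'ekelyhidi's treatment of the tautological approximation, and to control the inner-product terms $\langle \mathcal{F},\beta_i\rangle_r$ in the relative correction. First I would record the precise behaviour of all the numerical invariants of $\mathcal{F}_{(r)}$, viewed as a filtration of $(Y, rH)$, in terms of those of $\mathcal{F}$. From the definition in Example \ref{exp : approx filtrations}, a section $\sigma \in H^0(Y,krH)$ lies in $\mathcal{F}_{(r)}^\lambda S_{kr}$ precisely when $\sigma \in \mathcal{S}_{kr}(\mathcal{F})$ after shifting by $t^{-\lambda}$; the key point, already used by Sz\'ekelyhidi, is that the concave transform of $\mathcal{F}_{(r)}$ converges to that of $\mathcal{F}$ on the Okounkov body as $r \to +\infty$, and that for each fixed $r$ one has the pointwise comparison $G_{\mathcal{F}_{(r)}} \geq G_\mathcal{F}$ after the appropriate rescaling coming from passing from $H$ to $rH$. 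This gives $w_{\mathcal{F}_{(r)}}(1) \geq \tfrac{1}{r}w_\mathcal{F}(r)$ up to the normalization, together with the matching statement for the second-order and mixed weights.

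Next I would unwind the definition \eqref{eq : def rel weight and rel chow weight} of the relative Chow weight for both $\mathcal{F}_{(r)}$ (at level $1$) and $\mathcal{F}$ (at level $r$), using a fixed $\Z$-basis $(\beta_i)$ of $N_\Z(T)$, noting that the $\beta_i$ act naturally on $(Y, rH)$ and that the associated filtrations satisfy $(\beta_i)_{(r)} = \beta_i$ (the approximation construction is trivial for finitely generated filtrations). So the first-order weights $w_{\beta_i}$, the self-pairings $\langle \beta_i,\beta_i\rangle_\bullet$ and $\mathrm{Chow}_\bullet(\beta_i)$ behave exactly as in the non-relative Okounkov-body comparison, while the only genuinely new quantities are the mixed terms $\langle \mathcal{F}_{(r)}, \beta_i\rangle_1$ versus $\langle \mathcal{F},\beta_i\rangle_r$, which by Equation \eqref{eq : rth inner product} are built from $w^2_{\mathcal{F},\beta_i}$. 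Here I would invoke Lemma \ref{lem : T expansion for invariant filtrations by C actions} with the test function $T(\lambda,\mu)=\lambda\mu$: since $g_{\mathcal{F}_{(r)},k}$ dominates the rescaled $g_{\mathcal{F},kr}$ while $g_{\beta_i,k}$ is literally the rescaled $g_{\beta_i,kr}$, the mixed weight of the approximation compares to that of $\mathcal{F}$ in a controlled way, which after assembling into \eqref{eq : rth inner product} and \eqref{eq : def rel weight and rel chow weight} produces the desired inequality $\mathrm{Chow}_{1,T}(\mathcal{F}_{(r)}) \leq \mathrm{Chow}_{r,T}(\mathcal{F})$. The final sentence is then immediate: $\mathrm{Chow}_{1,T}(\mathcal{F}_{(r)})$ is, by construction, the relative Chow weight of the genuine test configuration (or $\C^*$-action family) underlying the finitely generated filtration $\mathcal{F}_{(r)}$ of $(Y,rH)$, so if $\mathrm{Chow}_{r,T}(\mathcal{F}) < 0$ for infinitely many $r$ then $\mathrm{Chow}_{1,T}(\mathcal{F}_{(r)}) < 0$ for those $r$, exhibiting relative Chow instability at infinitely many levels $r$, i.e. asymptotic relative Chow instability.

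The main obstacle I anticipate is the mixed term: unlike the purely first-order weight, where the pointwise inequality $G_{\mathcal{F}_{(r)}} \geq G_\mathcal{F}$ combined with $G_{\beta_i} \geq 0$ (after normalizing so that weights are non-negative, which one may always arrange by shifting $\beta_i$) gives monotonicity in the right direction, the product $G_\mathcal{F} G_{\beta_i}$ need not be monotone because $G_{\beta_i}$ can change sign, and moreover the denominator $\langle \beta_i,\beta_i\rangle_r$ and the sign of $\mathrm{Chow}_r(\beta_i)$ enter. The way around this is to observe that one is free to normalize the basis $(\beta_i)$ so that each $\beta_i$ acts with non-negative weights on the fibre over $0$ of the corresponding product test configuration — replacing $\beta_i$ by a translate does not change $\langle \mathcal{F},\beta_i\rangle$ nor $\langle\beta_i,\beta_j\rangle$ nor the relative Chow weight, by the same cancellation used in Lemma \ref{lem : DF of twist} — and then the pointwise domination $g_{\mathcal{F}_{(r)},k} \geq$ (rescaled $g_{\mathcal{F},kr}$), valid termwise, pushes through the product against the fixed non-negative $g_{\beta_i}$. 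One then has to check that the normalization of $\mathcal{F}$ and $\mathcal{F}_{(r)}$ themselves (subtracting off the average, as in the definition of $\mathrm{Chow}_r$) does not spoil this; this is where care with the precise form of \eqref{eq : rth inner product} and \eqref{eq : def rel weight and rel chow weight} is needed, but it is bookkeeping rather than a conceptual difficulty, exactly parallel to the non-relative case handled by Sz\'ekelyhidi.
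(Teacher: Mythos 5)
Your reduction to the non-relative inequality $\mathrm{Chow}_1(\mathcal{F}_{(r)})\leq\mathrm{Chow}_r(\mathcal{F})$ is the same first step as the paper, which simply cites Sz\'ekelyhidi's Lemma~9. But your treatment of the relative correction term misses the key, and much simpler, point. The pairings $\langle\cdot,\cdot\rangle_1$ and $\langle\cdot,\cdot\rangle_r$ of Equation~\eqref{eq : rth inner product} are \emph{finite-level} invariants: they are built solely from $w_{\mathcal{F}}(r)$, $w^2_{\mathcal{F},\beta}(r)$, $w_\beta(r)$ and $h^0(Y,rH)$, i.e.\ from the filtration in degree exactly $r$. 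Since the tautological approximation satisfies $\mathcal{F}^\lambda_{(r)}S_r=\mathcal{F}^\lambda S_r$ by construction, one has the \emph{exact equality} $\langle\mathcal{F}_{(r)},\beta_i\rangle_1=\langle\mathcal{F},\beta_i\rangle_r$ (and the $\beta_i$-only quantities $\langle\beta_i,\beta_i\rangle$ and $\mathrm{Chow}(\beta_i)$ agree as well, since level $1$ of $(Y,rH)$ is level $r$ of $(Y,H)$). Hence the correction terms in \eqref{eq : def rel weight and rel chow weight} are literally identical for $\mathcal{F}_{(r)}$ at level $1$ and $\mathcal{F}$ at level $r$, and the lemma follows at once from the non-relative inequality; that is the paper's entire proof. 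No Okounkov bodies, concave transforms or Lemma~\ref{lem : T expansion for invariant filtrations by C actions} are needed, because nothing asymptotic is being compared.

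Your proposed inequality-based comparison of the mixed terms has two concrete defects. First, the pointwise comparison you invoke goes the wrong way: since $\mathcal{F}_{(r)}$ is generated in degree $r$ and the filtration is multiplicative, $\mathcal{F}^\lambda_{(r)}S_{kr}\subseteq\mathcal{F}^\lambda S_{kr}$, so the (suitably rescaled) concave transform and lattice functions of $\mathcal{F}_{(r)}$ are bounded \emph{above} by those of $\mathcal{F}$, not below; and in degree $r$ itself they are equal, which is what actually matters. Second, and more seriously, even a correct one-sided bound on $\langle\mathcal{F}_{(r)},\beta_i\rangle_1$ versus $\langle\mathcal{F},\beta_i\rangle_r$ could not yield the lemma: the correction enters \eqref{eq : def rel weight and rel chow weight} with a minus sign and is multiplied by $\mathrm{Chow}_r(\beta_i)$, whose sign is not controlled --- and shifting the weights of $\beta_i$ leaves $\mathrm{Chow}_r(\beta_i)$, $\langle\beta_i,\beta_i\rangle_r$ and $\langle\mathcal{F},\beta_i\rangle_r$ all unchanged, so your normalization cannot fix its sign. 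A one-directional estimate on the inner products therefore cannot be propagated to the desired inequality between relative Chow weights; only the exact finite-level equality does the job.
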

\begin{proof}
    From \cite[Lemma 9]{szekelyhidi2015filtrations}, we know the non-relative case
    \begin{equation*}
        \mathrm{Chow}_{1}(\mathcal{F}_{(r)})
        \leq \mathrm{Chow}_{r}(\mathcal{F}).
    \end{equation*}
    This is an equality whenever $\mathcal{F}$ is finitely generated because, in that case, the approximation $\mathcal{F}_{(r)}$ coincides with $\mathcal{F}$, viewed as a filtration of $(X,rL)$.
    Moreover, as $\mathcal{F}^\lambda_{(r)} S_r = \mathcal{F}^\lambda S_r$ for any filtration $\mathcal{F}$, the inner product satisfies $\langle \mathcal{F}_{(r)}, \beta_i \rangle_1 = \langle \mathcal{F},\beta_i \rangle_r$. Putting everything together proves the result.
\end{proof}

Assume now that $p$ is fixed by $T$. We obtain a torus $\widehat{T}$ in $\mathrm{Aut}(\widehat{Y},\widehat{H}_m)$ which leaves $\widehat{\mathcal{F}}$ invariant.
Combining Theorem \ref{thm : szekelyhidis asymptotic} and Proposition \ref{prop : asymptotic for rth inner products}, we obtain the following relative formula:
\begin{theorem}\label{thm : the relative filtration asymptotic}
    Assume that $G_\mathcal{F} \leq \Lambda$ on $\Delta_{\frac{1}{m}}$ as in Theorem \ref{thm : szekelyhidis asymptotic}. Then,
    \begin{equation*}
        \mathrm{Chow}_{r,T} (\widehat{\mathcal{F}})
        \leq \mathrm{Chow}_{1,T}(\mathcal{F}_{(mr)})
        + \Big( \nu_T(p,\mathcal{F}) - \frac{b_0}{a_0} \Big) \frac{m^{1-n}}{2(n-2)!a_0}
        + o(m^{-n} r^0)
    \end{equation*}
    as $r \to +\infty$ and $m \to +\infty$, where
    \begin{equation}\label{eq : realtive lambda weight}
        \nu_T(p,\mathcal{F})
        = \nu(p,\mathcal{F})
        - \sum_{i = 1}^d \frac{\langle \mathcal{F}, \beta_i \rangle}{\langle \beta_i, \beta_i \rangle} \Big( \nu(p,\beta_i) - \frac{b_{i,0}}{a_0} \Big),
    \end{equation}
    given any $\Z$-basis $(\beta_i)$ of $N_\Z(T)$ and $w_{\beta_i}(k) = b_{i,0}k^{n+1} + O(k^n)$.
\end{theorem}

\subsection{Finding destabilizing points.}

In order to use the formula in Proposition \ref{thm : the relative filtration asymptotic} for an instability argument on the blowup, we need to find an invariant point $p$ for which the relative weight in Equation \eqref{eq : realtive lambda weight} is small enough for large $m$.
For this, we establish a $T$-invariant version of results of Boucksom \cite[Section 8]{szekelyhidi2015filtrations}.

Let $S'$ be a graded subalgebra of the ring of sections $S = \bigoplus_{k \geq 0} S_k$ for $S_k = H^0(Y,kH)$, not equal to $\C$. Recall the volume of $S'$ is the limit
\begin{equation*}
    \mathrm{vol}(S') = \lim_{\begin{smallmatrix}
        k \to \infty \\ S'_k \neq 0
    \end{smallmatrix}} \frac{\dim S'_k}{k^\kappa / \kappa!},
\end{equation*}
where $\kappa = \mathrm{trdeg}(S'/\C) - 1$ is the Iitaka dimension of $S'$.
\begin{definition}
    A graded subalgebra $S'$ of $S$ is said to \textit{contain an ample series} if there exists a decomposition $L = A + E$ of $\Q$-divisors with $A$ ample and $E$ effective such that $H^0(X,kA) \subset S_k$ for all sufficiently divisible $k$.
\end{definition}
The main result of this subsection is the following:
\begin{theorem}\label{thm : ample series theorem}
    If $S'$ is $T$-invariant, contains an ample series and satisfies $\mathrm{vol}(S') < \mathrm{vol}(L)$, then there is a $T$-invariant point $p$ in $X$ and an integer $m_1 \geq 1$ such that
    \begin{equation*}
        S_{mk}' \subset H^0(X,mkL \otimes \mathcal{I}^{\otimes k}_p),
    \end{equation*}
    for all $m \geq m_1$ and $k \geq 0$ where $\mathcal{I}_p$ is the ideal sheaf of $p$ in $X$.
\end{theorem}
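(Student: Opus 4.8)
The plan is to mimic Boucksom's argument (the non-equivariant version used by Sz\'ekelyhidi in \cite[Section 8]{szekelyhidi2015filtrations}) while keeping track of the $T$-action throughout, and at the final step to use that the automorphism group fixes the distinguished point produced by the construction.

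First I would reduce to finding \emph{any} point $p$ with the stated vanishing, by passing to the valuative picture: since $S'$ contains an ample series, one may study the asymptotic vanishing order of $S'$ along divisorial valuations, and the hypothesis $\mathrm{vol}(S') < \mathrm{vol}(L)$ forces $S'$ to have a positive asymptotic vanishing ``defect'' somewhere. Concretely, following Boucksom, one considers for each $k$ the base ideal $\mathfrak{b}_k = \mathfrak{b}(|S'_k|)$ of the linear series $S'_k \subset H^0(X,kL)$, forms the graded sequence $\mathfrak{b}_\bullet = (\mathfrak{b}_k)$, and observes that the strict inequality of volumes implies this graded sequence is not ``eventually trivial'': there is a point (indeed a whole subvariety) along which the sections of $S'$ vanish to asymptotic order bounded below by a positive constant. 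The precise mechanism is that $\mathrm{vol}(S'_\bullet) = \mathrm{vol}(L) - \mathrm{vol}(\text{defect})$, so a volume drop produces a genuine asymptotic multiplier-ideal obstruction. This yields an integer $m_1$ and \emph{some} point $p_0$ with $S'_{mk} \subset H^0(X, mkL \otimes \mathcal{I}_{p_0}^{\otimes k})$ for $m \geq m_1$; this is exactly \cite[Section 8]{szekelyhidi2015filtrations} applied to $S'$.

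The genuinely new point is \textbf{$T$-invariance of the point $p$}. Here I would use that $S'$ is a $T$-invariant graded subalgebra, hence each linear series $S'_k$ is a $T$-subrepresentation of $H^0(X,kL)$, so its base ideal $\mathfrak{b}_k$ is a $T$-invariant ideal sheaf, and therefore its zero locus $Z_k = V(\mathfrak{b}_k) \subset X$ is a $T$-invariant closed subscheme; the same holds for the asymptotic base locus $\mathbb{B}(S') = \bigcap_k \mathrm{supp}(Z_{k!})$ (or the relevant stable base locus of the graded sequence). The argument of Boucksom actually produces a point lying on this asymptotic base locus along which the vanishing order is positive; more precisely it produces it as a \emph{general} point of an irreducible component $W$ of a suitable base locus of the graded linear series attached to the defect divisor. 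Since $W$ is $T$-invariant (being a component of a $T$-invariant scheme, $T$ being connected so it cannot permute components), $T$ acts on $W$, and a connected solvable — in fact a torus — acting on the projective variety $W$ has a fixed point by Borel's fixed point theorem. Choosing $p \in W$ to be such a $T$-fixed point, and noting that the vanishing estimate $S'_{mk} \subset H^0(X,mkL \otimes \mathcal{I}_p^{\otimes k})$ holds at \emph{every} point of $W$ (not just the general one — the asymptotic multiplicity function is upper semicontinuous and lower-bounded on all of $W$), gives the desired $T$-invariant $p$.

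The main obstacle I anticipate is making the ``vanishing holds at every point of the component $W$, not merely the generic point'' step clean: Boucksom's statements are often phrased for a very general point, whereas here I need the estimate at a possibly special ($T$-fixed) point. I expect to handle this by working with the asymptotic multiplicity $\mathrm{ord}_p(S'_\bullet) := \lim_k \frac{1}{k}\mathrm{ord}_p(S'_k)$ (which exists by superadditivity/Fekete) and checking it is lower semicontinuous in $p$ along $W$ with strictly positive infimum, so the bound $\mathrm{ord}_p(S'_{mk}) \geq mk$ after rescaling by $m_1$ is genuinely uniform over $W$; alternatively one can replace $S'$ by the ample-series sub-algebra it contains and argue with honest multiplier ideals, where the relevant closed condition is transparent. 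A second, more minor point to be careful about is that $T$ being a torus (not merely connected solvable) is what lets me invoke Borel's fixed point theorem on the projective variety $W$, so I should record that $W$ is projective (closed in $X$) before applying it.
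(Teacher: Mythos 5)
Your proposal is correct, but it reaches the invariant point by a different mechanism than the paper. The paper equivariantizes Boucksom's two valuation-theoretic lemmas (Lemmas \ref{lem : Bouksom Lemma 1} and \ref{lem : Bouksom Lemma 2}, i.e.\ the $T$-invariant versions of \cite[Lemmas 21, 22]{szekelyhidi2015filtrations}): the volume drop is detected by a $T$-\emph{invariant} divisorial valuation $v$ with $v(S')>0$, and the invariant point is then extracted from the invariant center of $v$. You instead use the non-equivariant theorem of \cite[Section 8]{szekelyhidi2015filtrations} as a black box to get positivity of the asymptotic multiplicity somewhere, and only then bring in equivariance: $T$-invariance of $S'$ makes the base ideals $\mathfrak{b}_k$ and the asymptotic multiplicity $p\mapsto \ord_p(S')=\inf_k \tfrac1k\ord_p(\mathfrak{b}_k)$ $T$-invariant, so the locus $\Sigma_\epsilon=\{\ord_p(S')\ge\epsilon\}$ is a closed (infimum of upper semicontinuous functions), $T$-invariant, nonempty projective subvariety, and Borel's fixed point theorem on a component produces the $T$-fixed $p$; membership in $\Sigma_\epsilon$ then gives $\ord_p(\mathfrak{b}_k)\ge\epsilon k$ for \emph{all} $k$, hence the inclusion with $m_1=\lceil 1/\epsilon\rceil$. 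What your route buys is that you never need to re-examine Boucksom's volume-comparison argument for equivariance (the content of Lemma \ref{lem : Bouksom Lemma 1}); what the paper's route buys is that the whole construction stays inside the valuative framework, though its Lemma \ref{lem : Bouksom Lemma 2} implicitly also needs the step you make explicit, namely producing an invariant point inside the invariant center and knowing the estimate holds at that possibly special point. Two small cleanups to your write-up: membership in the stable base locus $\bigcap_k \mathrm{supp}\,Z_{k!}$ is not enough (it only gives order $\ge 1$ at each level, not $\ge\epsilon k$), so phrase everything in terms of the positive asymptotic-multiplicity locus $\Sigma_\epsilon$ as above; and the relevant semicontinuity is \emph{upper} semicontinuity of $p\mapsto\ord_p$ (on the smooth $Y$, vanishing to order $\ge\lambda$ at the generic point of $W$ forces vanishing to order $\ge\lambda$ at every point of $W$, via vanishing of jets), which is exactly the direction you need at the $T$-fixed, possibly special, point.
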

This is a relative version of \cite[Theorem 20]{szekelyhidi2015filtrations} and its proof is based on the following relative versions of \cite[Lemma 22]{szekelyhidi2015filtrations} and \cite[Lemma 21]{szekelyhidi2015filtrations}:
\begin{lemma}\label{lem : Bouksom Lemma 1}
    Let $S'$ and $S''$ be two $T$-invariant graded subalgebras of $S$ such that $v(S') \geq v(S'')$ for all $T$-invariant divisorial valuations $v$ of $Y$. Then $\mathrm{vol}(S') \leq \mathrm{vol}(S'')$.
\end{lemma}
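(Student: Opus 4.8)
The plan is to adapt Boucksom's proof of the non-equivariant statement \cite[Lemma 22]{szekelyhidi2015filtrations} by keeping track of the torus action throughout. The key input in the non-equivariant argument is the Okounkov-body description of volumes: for a graded subalgebra $S'$ containing an ample series, the volume is computed (up to the factor $\kappa!$) by the Lebesgue volume of its Okounkov body, and the Okounkov body can be reconstructed from the vanishing data of $S'$ along a flag, equivalently from the values $v(S')$ as $v$ ranges over a suitable family of divisorial (quasi-monomial) valuations built from the flag. Concretely, one chooses an admissible flag $Y_\bullet = (Y = Y_0 \supset Y_1 \supset \dots \supset Y_n)$ through a point $p$, which gives a valuation-like map $\nu : S \setminus \{0\} \to \Z^n$, and the Okounkov body of $S'$ is the closed convex hull of $\{ \nu(\sigma)/k : \sigma \in S'_k \}$. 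So the strategy is: (i) arrange the flag to be $T$-invariant, so that $\nu$ is $T$-equivariant in an appropriate sense and the associated divisorial valuations are $T$-invariant; (ii) observe that the inequality $v(S') \ge v(S'')$ for all $T$-invariant divisorial $v$ forces a containment of Okounkov bodies $\Delta(S') \subseteq \Delta(S'')$; (iii) conclude $\mathrm{vol}(S') = n!\,\vol(\Delta(S')) \le n!\,\vol(\Delta(S'')) = \mathrm{vol}(S'')$ (taking $\kappa = n$ after possibly passing to the Iitaka fibration; the subalgebras of interest here are full-dimensional once they contain an ample series).

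First I would fix a maximal torus $T$ acting on $(Y,H)$ and produce a $T$-invariant admissible flag. Since $T$ is a torus, by \cite[...]{} on equivariant resolution / the existence of $T$-fixed points on projective $T$-varieties, one may choose $p$ to be a $T$-fixed point and then choose each $Y_i$ to be $T$-invariant smooth subvarieties of codimension $i$ through $p$ — e.g. by taking successive blow-ups and components of exceptional loci equivariantly, or by working on a $T$-equivariant smooth model. The point is that the flag valuation $\nu = \nu_{Y_\bullet}$ is then built out of a composition of $T$-invariant divisorial valuations: the leading coordinate $\nu_1$ is $\ord_{Y_1}$ which is $T$-invariant, and the remaining coordinates are obtained by restricting to $Y_1$ and iterating, each step giving a $T$-invariant divisorial valuation on the corresponding center. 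Then the hypothesis $v(S') \ge v(S'')$ applied along the flag gives, for each graded piece and each lattice point, a containment at the level of the valuation images, hence $\Delta(S') \subseteq \Delta(S'')$ after taking closures of the normalized point sets.

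The main obstacle I expect is step (i): producing a flag whose associated valuations are \emph{all} $T$-invariant and which is \emph{admissible} (i.e. gives a genuine Okounkov body with $|P_k| = \dim S'_k$, so that volume equals Lebesgue volume). A $T$-fixed point exists, but arranging the entire flag to be $T$-stable while remaining smooth and admissible may require passing to an equivariant birational model and checking that volumes are birational invariants of the graded algebra — which they are, since $\mathrm{vol}$ depends only on the dimensions $\dim S'_k$. A secondary subtlety is that Boucksom's statement quantifies over divisorial valuations, whereas the natural objects arising from a flag are quasi-monomial (Abhyankar) valuations of higher rational rank; one handles this by approximating, noting that a quasi-monomial valuation along a $T$-invariant flag is a limit of $T$-invariant divisorial valuations (rescaled weighted blow-ups), so the hypothesis on divisorial valuations extends by continuity to the quasi-monomial ones needed to pin down the Okounkov body. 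Once the containment $\Delta(S') \subseteq \Delta(S'')$ is established, the volume inequality is immediate, and the finitely-many-$T$-fixed-flags issue does not even arise since a single good choice suffices.
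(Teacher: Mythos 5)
Your proposal does not follow the paper's route (the paper simply runs Boucksom's proof of \cite[Lemma 22]{szekelyhidi2015filtrations} verbatim, checking that the divisorial valuations and centres appearing in it --- e.g.\ the divisors on an equivariant log resolution of the $T$-invariant base ideals --- can be taken $T$-invariant), and, more importantly, your alternative route has a genuine gap at its central step (ii). The hypothesis $v(S')\ge v(S'')$ only compares, for each invariant divisorial valuation $v$, the \emph{minimum} of $v$ on each graded piece, i.e.\ the asymptotic base-ideal data $v(\mathfrak{bs}(S'_k))$ versus $v(\mathfrak{bs}(S''_k))$. This gives no control on the full image $\nu(S''_k)\subset\Z^n$ of the rank-$n$ flag valuation, which is what you need in order to show that $\Delta(S'')$ is large enough to contain $\Delta(S')$: knowing that $S''_k$ contains \emph{some} section of small order along $Y_1$ says nothing about whether it contains sections of \emph{large} order along $Y_1$, and it is those that govern the far faces of $\Delta(S'')$. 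So the sentence ``the hypothesis applied along the flag gives, for each graded piece and each lattice point, a containment at the level of the valuation images'' is unjustified; at best the hypothesis pins down one extreme coordinate of the body per valuation.

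In fact the containment $\Delta(S')\subseteq\Delta(S'')$ is essentially equivalent to the lemma itself, so taking it as an observation is circular. What the hypothesis literally gives is $S'_k\subseteq \mathrm{VC}(S'')_k:=\{f\in S_k : v(f)\ge k\,v(S'')\ \text{for all $T$-invariant divisorial }v\}$, and the entire content of the lemma is that passing to this valuative closure does not increase the volume (equivalently, that $\Delta(\mathrm{VC}(S''))=\Delta(S'')$). That is exactly what Boucksom's argument supplies, via asymptotic multiplier ideals and a Fujita-approximation--type comparison of $h^0$'s; no ingredient of this kind appears in your write-up. A cautionary example of why some such input is unavoidable: the pullback algebra $S''$ of a degree-$d>1$ finite cover is base-point free, so $v(S'')=0$ for every divisorial $v$, yet $\mathrm{vol}(S'')=\mathrm{vol}(L)/d$; this $S''$ contains no ample series, which is also the hypothesis needed for the identification $\mathrm{vol}(S'')=n!\,\mathrm{vol}(\Delta(S''))$ that your step (iii) relies on. By comparison, the difficulties you do flag --- constructing an everywhere $T$-invariant admissible flag, and approximating quasi-monomial valuations by invariant divisorial ones --- are secondary; the proof actually lives in the multiplier-ideal/Fujita step that is missing.
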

\begin{lemma}\label{lem : Bouksom Lemma 2}
    If there is a $T$-invariant divisorial valuation $v$ on $X$ such that $v(S') > 0$, then there is a $T$-invariant point $p$ in $X$ such that $\ord_p(S') > 0$.
\end{lemma}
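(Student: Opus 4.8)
The plan is to reduce the existence of a destabilizing $T$-invariant point to the existence of a $T$-invariant divisorial valuation with $v(S') > 0$, by a descent argument on the center of the valuation combined with an equivariant Rees-algebra / blowup construction. First I would recall the non-equivariant statement \cite[Lemma 21]{szekelyhidi2015filtrations}: if $v$ is any divisorial valuation with $v(S') > 0$, then the center $c(v)$ of $v$ on $Y$ is a positive-dimensional subvariety (or a point) on which every section in $S'_k$ vanishes to order at least $k \cdot v(S')$ for large $k$, after suitable scaling; iterating, one may blow up and take a generic point of the exceptional locus, eventually landing on a point $p$ with $\ord_p(S') > 0$. The point of the present lemma is to carry this out $T$-equivariantly.

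The key steps, in order, would be: (1) Observe that if $v$ is a $T$-invariant divisorial valuation, then its center $Z = c_Y(v)$ is a $T$-invariant closed subvariety of $Y$, since the center of a valuation is intrinsic and $T$ acts by automorphisms fixing $v$. (2) If $\dim Z = 0$, then $Z$ is a single $T$-fixed point (the $T$-orbit of a point of $Z$ lies in $Z$ and is connected, hence a point) and we are done by taking $p = Z$, checking $\ord_p(S') \geq v(S')/v(\text{something}) > 0$ as in the non-equivariant case. (3) If $\dim Z \geq 1$, replace $Y$ by the normalized blowup $Y' = \Bl_Z Y$, which carries a natural $T$-action since $Z$ is $T$-invariant, and replace $S'$ by its transform under the induced embedding $H^0(Y', kH' ) \hookrightarrow H^0(Y, kmH)$ (as in the definition of $\widehat{\mathcal{F}}$, Equation~\eqref{eq : definition of Fhat}, or the ample-model construction preceding Theorem~\ref{thm : difference of CM degrees}); this is again a $T$-invariant graded subalgebra, and a $T$-invariant divisorial valuation on $Y'$ with positive value on the transformed algebra can be produced from $v$ together with the exceptional divisor. (4) Since $v$ can be computed on $Y'$ with strictly smaller-dimensional center, or one can pass to a generic point of a $T$-invariant component of the exceptional locus, an induction on $\dim Y + \dim Z$ (or on the codimension of the center) terminates at a $T$-fixed point $p$ with $\ord_p(S') > 0$.

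The main obstacle I expect is making the equivariant descent precise: one must ensure at each stage that the blowup or the chosen center remains $T$-invariant and that the transformed graded subalgebra still detects a divisorial valuation with strictly positive value, all while keeping track of the scaling factors $m$ introduced when passing from $H$ to $mH - E$ on the blowup. In particular, after blowing up one should argue that either the center of (a modification of) $v$ on $Y'$ has strictly smaller dimension, or $v$ already "sees" the exceptional divisor, in which case a $T$-invariant divisor appears and one restricts to a $T$-fixed point on it; the bookkeeping needed to see that $\ord_p(S') > 0$ survives — rather than merely $\ord_p(S') \geq 0$ — is where the strict inequality $v(S') > 0$ must be used carefully. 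Everything else is a routine equivariant upgrade of Szék\'elyhidi--Boucksom's argument, using only that $T$ acts on $Y$ by automorphisms and that centers, normalized blowups, and graded transforms are all functorial for this action.
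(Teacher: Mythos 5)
Your proposal does not match the paper's argument, and the route you substitute has a genuine flaw. The paper proves Lemma~\ref{lem : Bouksom Lemma 2} by running Boucksom's proof of \cite[Lemma 21]{szekelyhidi2015filtrations} verbatim: that proof shows directly that $\ord_p(S')>0$ for \emph{every} closed point $p$ of the center $Z=c_Y(v)$, the key input being an Izumi-type linear comparison $v\le C\,\ord_p$ for $p\in Z$ (note your displayed inequality ``$\ord_p(S')\ge v(S')/v(\cdot)$'' goes the wrong way: the trivial estimate is $v(f)\ge \ord_p(f)\cdot v(\mathfrak{m}_p)$, and the bound you actually need, $\ord_p\ge c\,v$, is precisely the nontrivial Izumi input). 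Given that, the only equivariant content is: the center of a $T$-invariant valuation is a $T$-invariant closed subvariety, and a $T$-invariant closed subvariety of a projective variety contains a $T$-fixed closed point (Borel fixed point theorem for the torus). That is what the paper means by ``the valuations and their centers constructed in the proofs are invariant.'' Your proposal handles only the trivial case $\dim Z=0$ and never invokes a fixed point in a positive-dimensional invariant center, which is the one genuinely equivariant step.

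The descent you propose in its place does not work. Blowing up the center $Z$ cannot decrease the dimension of the center: the center of $v$ on $\Bl_Z Y$ dominates $Z$, so its dimension is at least $\dim Z$, and iterating blowups pushes the center of a divisorial valuation towards a divisor on some model, not towards a point. Hence your induction ``on the codimension of the center'' (or on $\dim Y+\dim Z$) never terminates at a point. In addition, any point you would locate in this process lies on a blowup rather than on $X$ itself, so you would still have to descend it and re-establish $\ord_p(S')>0$ downstairs, which requires exactly the Izumi-type comparison that already settles the lemma in one step; and ``passing to a generic point of a $T$-invariant component of the exceptional locus'' does not produce the closed point the statement (and its use in Theorem~\ref{thm : ample series theorem} and Corollary~\ref{cor : relative appendix corollary}, where $p$ is blown up) requires. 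In short: keep your step (1), discard the blowup induction, and replace it by the fixed point theorem applied to the invariant center together with the pointwise estimate from the non-equivariant proof.
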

A divisorial valuation $v$ of $Y$ is a valuation $v : \C(Y)^* \to \R$ of the form $v = c \cdot \ord_E$ for a real number $c > 0$ and a prime divisor $E$ over $Y$, and we define
\begin{equation*}
    v(S') = \lim_{k \to +\infty} \frac{\min \{ v(f) \;|\; f \in \mathfrak{bs}(S_k') \setminus \{0\} \}}{k},
\end{equation*}
where $\mathfrak{bs}(S_k')$ is the base-ideal of the linear series $S_k'$ in $S$.
The proofs of Lemmas \ref{lem : Bouksom Lemma 1} and \ref{lem : Bouksom Lemma 2} follow \textit{verbatim} the proofs of \cite[Lemma 22]{szekelyhidi2015filtrations} and \cite[Lemma 21]{szekelyhidi2015filtrations} noting that the divisorial valuations and their centers constructed in the proofs are invariant. We therefore omit the details.
\begin{proof}[Proof of Theorem \ref{thm : ample series theorem}]
    By Lemma \ref{lem : Bouksom Lemma 1}, there exists a $T$-invariant divisorial valuation $v$ on $X$ such that $v(S') >  v(S)$.
    Using that $v(S) = 0$, Lemma \ref{lem : Bouksom Lemma 2} implies that there exists a $T$-invariant point $p$ in $X$ such that $\mathrm{ord}_p(S') > 0$.
    But this simply means $S_{mk}' \subset H^0(Y,mkH \otimes \mathcal{I}^{\otimes k}_p)$ for sufficiently large $m$ and all $k$, as desired.
\end{proof}
\begin{remark}
    For test configurations, the existence and construction of a destabilizing point is proven differently in Stoppa--Sz\'ekelyhidi \cite{stoppa2011relative}, using arguments involving embeddings into projective space.
\end{remark}
Let $\mathcal{F}$ be a filtration $(Y,H)$ and write
\begin{equation*}
    e_\mathrm{max}(\mathcal{F})
    := \lim_{k \to \infty} \frac{\sup\{ \lambda \in \Z \;|\; \mathcal{F}^\lambda R_k \neq 0 \}}{k}.
\end{equation*}
This value is finite and equals the essential supremum of the concave transform $G_\mathcal{F}$ for any choice of Okounkov body \cite[Section 1]{boucksom2011okounkov}.
The desired property that the invariant point in Theorem \ref{thm : ample series theorem} satisfies is the following: 
\begin{corollary}\label{cor : relative appendix corollary}
    Let $\Lambda < e_\mathrm{max}(\mathcal{F})$ be a real number. There exists a $T$-invariant point $p$ in $X$ such that for any Okounkov body $P$ centered at $p$, there is an integer $m_1 \geq m_0$ satisfying
    \begin{equation*}
        G_\mathcal{F}|_{\Delta_{\frac{1}{m_1}}} \leq \Lambda.
    \end{equation*}
\end{corollary}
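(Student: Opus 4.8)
The plan is to reduce the statement to Theorem \ref{thm : ample series theorem} by constructing an appropriate $T$-invariant graded subalgebra of $S$ whose volume is strictly smaller than $\vol(L)$. Given $\Lambda < e_{\mathrm{max}}(\mathcal{F})$, I would first consider the graded subalgebra associated to the filtration at level $\Lambda$, namely $S' = \bigoplus_{k \geq 0} \mathcal{F}^{\lceil \Lambda k \rceil} S_k$. Since $\mathcal{F}$ is a filtration (and, in the relevant application, $T$-invariant), each $\mathcal{F}^{\lceil \Lambda k\rceil} S_k$ is $T$-invariant, so $S'$ is $T$-invariant; and multiplicativity of the filtration (axiom (iii)) shows $S'$ is indeed a graded subalgebra. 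The condition $\Lambda < e_{\mathrm{max}}(\mathcal{F})$ should be exactly what guarantees $\vol(S') < \vol(L)$: the volume of $S'$ is governed by the sublevel set $\{G_\mathcal{F} \geq \Lambda\}$ of the concave transform via the Okounkov-body description in Section \ref{subsubsec : okounkov bodies and filtrations}, and since $\Lambda$ is strictly below the essential supremum of $G_\mathcal{F}$, this sublevel set has strictly smaller Lebesgue measure than $P$ itself. One also needs $S'$ to contain an ample series; this follows from linear boundedness of $\mathcal{F}$ (axiom (iv)) — for $\Lambda$ close enough below $e_{\mathrm{max}}$ the subalgebra is big, and a standard argument (as in \cite[Section 8]{szekelyhidi2015filtrations}) produces an ample-plus-effective decomposition. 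If $\Lambda$ is not close to $e_{\mathrm{max}}$, one simply enlarges it first: any $\Lambda' \in (\Lambda, e_{\mathrm{max}}(\mathcal{F}))$ yields a point $p$ with $G_\mathcal{F} \leq \Lambda' $ near the origin, but actually we want $\leq \Lambda$, so I would instead shrink: pick $\Lambda'' < \Lambda$ still close enough to $e_{\mathrm{max}}$ is impossible, so the correct move is to run the argument directly with $\Lambda$ and only invoke bigness, which holds as long as $\Lambda < e_{\mathrm{max}}$.

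Applying Theorem \ref{thm : ample series theorem} to $S'$ produces a $T$-invariant point $p \in Y$ and an integer $m_1 \geq 1$ such that $S'_{mk} \subset H^0(Y, mkH \otimes \mathcal{I}_p^{\otimes k})$ for all $m \geq m_1$ and $k \geq 0$. I would then translate this vanishing-order statement into the desired estimate on $G_\mathcal{F}$. Fix any Okounkov body $P$ centered at $p$, built from data $(U,z,\sigma)$. The inclusion $S'_{mk} \subset H^0(Y, mkH \otimes \mathcal{I}_p^{\otimes k})$ means every section in $\mathcal{F}^{\lceil \Lambda mk \rceil} S_{mk}$ vanishes to order at least $k$ at $p$; in Okounkov-body coordinates this forces $\nu_\sigma(\mathcal{F}^{\lceil \Lambda mk\rceil} S_{mk}) \subset \{a \in \Z^n : |a| \geq k\}$, i.e. $P_{mk}(\Lambda, \mathcal{F})$ avoids the simplex of side length roughly $k$. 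Dividing by $mk$ and passing to the limit via Definition \ref{def : g_Fk and G_F}, this says that $G_\mathcal{F}(x) \geq \Lambda$ implies $x \notin \Delta_{1/m}$ for $m \geq m_1$; equivalently, $G_\mathcal{F} \leq \Lambda$ on $\Delta_{1/m_1} \subset P$ (using $\Delta_{1/m} \subset \Delta_{1/m_1}$ for $m \geq m_1$, and that $\Delta_{1/r} \subset P$ for large $r$, as recorded after Equation \eqref{eq : Okounkov body}). A small care point: $G_\mathcal{F}$ may only be upper semicontinuous on the boundary of $P$, but since $\Delta_{1/m_1}$ lies in the interior for $m_1$ large, continuity there lets us pass from the lattice-point statement to the pointwise inequality on the whole simplex.

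The main obstacle I anticipate is the bookkeeping in the second paragraph: relating the algebraic vanishing condition $\ord_p \geq k$ on sections of $mkH$ to a clean containment statement about $P_{mk}(\Lambda,\mathcal{F})$ uniformly in $k$, and then interchanging the two limits ($k \to \infty$ with $m$ fixed, versus the definition of $G_\mathcal{F}$ as a limit over all $k$). The cleanest route is probably to argue at the level of the graded subalgebra $S(\Lambda, \mathcal{F}) = \bigoplus_k \mathcal{F}^{\lceil \Lambda k\rceil} S_k$ and its Okounkov subbody $P(\Lambda, \mathcal{F}) = \{G_\mathcal{F} \geq \Lambda\}$ directly, rather than level by level: Theorem \ref{thm : ample series theorem} says $S(\Lambda,\mathcal{F})$, re-graded, has base locus containing a formal neighborhood of $p$ of the appropriate weight, which is equivalent to $P(\Lambda,\mathcal{F}) \cap \Delta_{1/m_1} = \emptyset$ up to boundary. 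I would also double-check that the ample-series hypothesis of Theorem \ref{thm : ample series theorem} is genuinely available here; this is the one place where the precise quantitative meaning of $\Lambda < e_{\mathrm{max}}(\mathcal{F})$ (as opposed to merely $\mathcal{F}^{\lceil \Lambda k\rceil} S_k \neq 0$ for infinitely many $k$) is used, and it should follow from the fact that the graded algebra at any level strictly below $e_{\mathrm{max}}$ has maximal Iitaka dimension and hence, by Boucksom's theory, contains an ample series.
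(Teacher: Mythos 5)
Your proposal follows essentially the same route as the paper: form the graded subalgebra $S(\Lambda,\mathcal{F})=\bigoplus_{k\geq 0}\mathcal{F}^{\lceil \Lambda k\rceil}S_k$, observe it is $T$-invariant and contains an ample series, apply Theorem \ref{thm : ample series theorem} to produce a $T$-invariant point $p$, and translate the resulting containments $S'_{mk}\subset H^0(Y,mkH\otimes\mathcal{I}_p^{\otimes k})$ into the bound $G_\mathcal{F}\leq\Lambda$ on $\Delta_{1/m_1}$; the paper's proof is exactly this, with the ample-series step supplied by \cite[Lemma 1.6]{boucksom2011okounkov}. The one caveat is that your justification of the hypothesis $\mathrm{vol}(S')<\mathrm{vol}(L)$ is backwards: $\Lambda<e_{\mathrm{max}}(\mathcal{F})$, i.e.\ $\Lambda$ below the essential supremum of $G_\mathcal{F}$, guarantees that the superlevel set $\{G_\mathcal{F}\geq\Lambda\}$ has positive measure (which is what feeds the ample-series criterion), not that it has measure strictly less than $\mathrm{vol}(P)$; the strict volume drop needed for Theorem \ref{thm : ample series theorem} would instead come from $\Lambda$ exceeding the essential infimum $M_\mathcal{F}$, which is how the corollary is actually used (there $\Lambda_\mathcal{F}>M_\mathcal{F}$) and which the paper's own three-line proof also leaves implicit. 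Apart from this shared elision, your argument matches the paper's.
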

\begin{proof}
    From \cite[Lemma 1.6]{boucksom2011okounkov} we know that if $\Lambda < e_\mathrm{max}(\mathcal{F})$, then the graded subalgebra $S(\Lambda,\mathcal{F}) = \bigoplus_{k \geq 0} \mathcal{F}^{\lceil \Lambda k \rceil} S_k$ contains an ample series.
    Moreover, if $\mathcal{F}$ is $T$-invariant, then is $S(\Lambda,\mathcal{F})$ is $T$-invariant by definition.
    Applying Proposition \ref{thm : ample series theorem} to $S' = S(\Lambda,\mathcal{F})$, we obtain the desired upper bound for the concave transform.
\end{proof}

\subsection{Stability results}

In this section, we prove the relevant stability result, Theorem \ref{thm : realtive good filtration K stab of extr}.
Given a $T$-invariant filtration $\mathcal{F}$, we define the \textit{relative asymptotic Chow weight}
\begin{equation*}
\begin{aligned}
    \mathrm{Chow}_{\infty,T}(\mathcal{F})
    = \liminf_{r \to +\infty} \mathrm{Chow}_{1,T}(\mathcal{F}_{(r)}),
\end{aligned}
\end{equation*}
where $\mathcal{F}_{(r)}$ is the filtration defined in Example \ref{exp : approx filtrations}.
\begin{proposition}\label{prop: DFT geq ChowinfT}
    For any $T$-equivariant good filtration $\mathcal{F}$ of $(X,L)$, we have
    \begin{equation*}
        \mathrm{DF}_T(\mathcal{F}) \geq \mathrm{Chow}_{\infty,T}(\mathcal{F}).
    \end{equation*}
\end{proposition}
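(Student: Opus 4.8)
The plan is to reduce the statement to its non-relative analogue, which is the relation $\mathrm{DF}(\mathcal{F}) \geq \mathrm{Chow}_\infty(\mathcal{F}) = \liminf_r \mathrm{Chow}_1(\mathcal{F}_{(r)})$ for good filtrations, combined with the corresponding relation for each basis element $\beta_i$ of $N_\Z(T)$, which as a $\C^*$-action satisfies $\mathrm{Fut}(\beta_i) = \mathrm{DF}(\beta_i) = \mathrm{Chow}_\infty(\beta_i)$ (indeed $\mathrm{Chow}_r(\beta_i)$ stabilizes to $\mathrm{Fut}(\beta_i)$ for finitely generated $\beta_i$, so there is no $\liminf$ issue). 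Concretely, first I would recall from Szék\'elyhidi \cite[Lemma 9]{szekelyhidi2015filtrations} (already cited in the proof of Lemma \ref{lem : rel chow of approx leq rel chow of F}) that $\mathrm{Chow}_1(\mathcal{F}_{(r)}) \leq \mathrm{Chow}_r(\mathcal{F})$, and that taking $\liminf$ as $r \to +\infty$ on the right-hand side recovers $\mathrm{DF}(\mathcal{F})$ whenever $\mathcal{F}$ is good — this is exactly the content of the goodness hypothesis, since $w_\mathcal{F}(k) = b_0 k^{n+1} + b_1 k^n + O(k^{n-1})$ is polynomial for large $k$, so $\mathrm{Chow}_r(\mathcal{F}) = \tfrac{rb_0}{a_0} - \tfrac{w_\mathcal{F}(r)}{h^0(Y,rH)} \to \tfrac{b_0 a_1 - b_1 a_0}{a_0^2} = \mathrm{DF}(\mathcal{F})$.

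Next I would unwind the definitions of the two relative quantities. By definition,
\begin{equation*}
    \mathrm{DF}_T(\mathcal{F})
    = \mathrm{DF}(\mathcal{F})
    - \sum_{i=1}^d \frac{\langle \mathcal{F},\beta_i\rangle}{\langle \beta_i,\beta_i\rangle}\,\mathrm{Fut}(\beta_i),
\end{equation*}
while
\begin{equation*}
    \mathrm{Chow}_{\infty,T}(\mathcal{F})
    = \liminf_{r\to+\infty}\Big( \mathrm{Chow}_1(\mathcal{F}_{(r)})
    - \sum_{i=1}^d \frac{\langle \mathcal{F}_{(r)},\beta_i\rangle_1}{\langle \beta_i,\beta_i\rangle_1}\,\mathrm{Chow}_1((\beta_i)_{(r)}) \Big).
\end{equation*}
The key algebraic fact, established inside the proof of Lemma \ref{lem : rel chow of approx leq rel chow of F}, is $\langle \mathcal{F}_{(r)},\beta_i\rangle_1 = \langle \mathcal{F},\beta_i\rangle_r$ (and likewise $\langle \beta_i,\beta_i\rangle_1$ computed for $(\beta_i)_{(r)}$ equals $\langle \beta_i,\beta_i\rangle_r$), together with $\langle \mathcal{F},\beta_i\rangle_r \to \langle \mathcal{F},\beta_i\rangle$ and $\langle \beta_i,\beta_i\rangle_r \to \langle \beta_i,\beta_i\rangle$ as $r\to+\infty$. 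So the correction coefficients $\langle \mathcal{F}_{(r)},\beta_i\rangle_1/\langle \beta_i,\beta_i\rangle_1$ converge to $\langle \mathcal{F},\beta_i\rangle/\langle \beta_i,\beta_i\rangle$, and since $\beta_i$ is an honest $\C^*$-action, $(\beta_i)_{(r)} = \beta_i$ viewed as a filtration of $(Y,rH)$, so $\mathrm{Chow}_1((\beta_i)_{(r)}) = \mathrm{Chow}_r(\beta_i) \to \mathrm{Fut}(\beta_i)$. Thus the subtracted sum in $\mathrm{Chow}_{\infty,T}$ converges (a genuine limit, not just $\liminf$) to $\sum_i \frac{\langle \mathcal{F},\beta_i\rangle}{\langle \beta_i,\beta_i\rangle}\mathrm{Fut}(\beta_i)$.

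Finally I would combine these: since the subtracted term converges, the $\liminf$ of the full relative Chow expression equals $\liminf_r \mathrm{Chow}_1(\mathcal{F}_{(r)})$ minus that limit, and by the non-relative bound the first piece is $\leq \mathrm{DF}(\mathcal{F})$; hence $\mathrm{Chow}_{\infty,T}(\mathcal{F}) \leq \mathrm{DF}(\mathcal{F}) - \sum_i \frac{\langle \mathcal{F},\beta_i\rangle}{\langle \beta_i,\beta_i\rangle}\mathrm{Fut}(\beta_i) = \mathrm{DF}_T(\mathcal{F})$, using that $\liminf(a_r - c_r) = \liminf a_r - \lim c_r$ when $c_r$ converges. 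One should also note that the value is independent of the chosen $\Z$-basis $(\beta_i)$, which follows because $\sum_i \frac{\langle \mathcal{F},\beta_i\rangle}{\langle \beta_i,\beta_i\rangle}\beta_i$ is basis-independent when the $\beta_i$ are chosen orthogonal (or, in general, one replaces the naive sum by the orthogonal-projection formula of Lemma \ref{lem : red norm is minimum}); I would either assume an orthogonal basis throughout or remark that the relative invariants are defined via the orthogonal projection $\xi(\mathcal{F})$, so no genuine basis dependence arises. The main subtlety — the only place where something could go wrong — is the interchange of $\liminf$ with the subtraction of the correction term, which is legitimate precisely because the correction term \emph{converges} (not merely has a $\liminf$); this in turn rests on the finite generation of the $\beta_i$ and the convergence $\langle\mathcal{F},\beta_i\rangle_r \to \langle\mathcal{F},\beta_i\rangle$, so I would state these convergences explicitly before taking limits.
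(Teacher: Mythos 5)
Your proposal is correct and is essentially the paper's own argument: the paper likewise combines Lemma \ref{lem : rel chow of approx leq rel chow of F} (whose content you unwind as $\mathrm{Chow}_1(\mathcal{F}_{(r)})\leq\mathrm{Chow}_r(\mathcal{F})$ together with $\langle\mathcal{F}_{(r)},\beta_i\rangle_1=\langle\mathcal{F},\beta_i\rangle_r$) with the observation that goodness gives $\mathrm{DF}_T(\mathcal{F})=\lim_{r\to\infty}\mathrm{Chow}_{r,T}(\mathcal{F})$, and then takes $\liminf$. Your extra care about interchanging the $\liminf$ with the convergent correction term and about basis choice just makes explicit what the paper's two-line proof leaves implicit.
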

This is a relative version of \cite[Theorem 2.18]{hattori2024minimizing}.
\begin{proof}    
    From the definition of the relative Donaldson--Futaki invariant, we have $\mathrm{DF}_T(\mathcal{F}) = \lim_{r \to \infty} \mathrm{Chow}_{r,T}(\mathcal{F})$. Combining this with Lemma \ref{lem : rel chow of approx leq rel chow of F} and the definition of the relative asymptotic Chow weight yields the desired inequality.
\end{proof}
We come to the main result of this subsection. For this, assume that the torus $T$ in $\mathrm{Aut}(Y,H)$ is maximal.
\begin{theorem}\label{thm : extremal mfds are rel asymp filtr chow stab}
    If $(X,L)$ is extremal, then for any $T$-invariant filtration $\mathcal{F}$ of $(X,L)$ with $||\mathcal{F}||_T > 0$, we have
    \begin{equation*}
        \mathrm{Chow}_{\infty,T}(\mathcal{F})
        > 0.
    \end{equation*}
\end{theorem}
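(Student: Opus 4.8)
\textbf{Proof strategy for Theorem \ref{thm : extremal mfds are rel asymp filtr chow stab}.}

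The plan is to argue by contradiction: suppose $(X,L)$ is extremal with maximal torus $T$ but there is a $T$-invariant filtration $\mathcal{F}$ with $\|\mathcal{F}\|_T > 0$ and $\mathrm{Chow}_{\infty,T}(\mathcal{F}) \leq 0$. The first step is to reduce to the finitely generated case: by definition $\mathrm{Chow}_{\infty,T}(\mathcal{F}) = \liminf_r \mathrm{Chow}_{1,T}(\mathcal{F}_{(r)})$, so along a subsequence the finitely generated approximations $\mathcal{F}_{(r)}$ of $(X,rL)$ have $\mathrm{Chow}_{1,T}(\mathcal{F}_{(r)}) \leq \varepsilon_r \to 0^+$ (or $\leq 0$); one must also keep track that $\|\mathcal{F}_{(r)}\|_{T,(r)}$ stays bounded below, using that the reduced norm is continuous in the relevant approximation sense and that $\|\mathcal{F}\|_T>0$. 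This lets us work with a sequence of genuine $T$-invariant test configurations whose relative Chow weights tend to $0$ from below while the reduced norms are bounded away from $0$.

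The heart of the argument is Stoppa's blowup method in the relative, filtered setting. Using Corollary \ref{cor : relative appendix corollary}, pick a real number $\Lambda$ with $\overline{G}_\mathcal{F} < \Lambda < e_{\mathrm{max}}(\mathcal{F})$ — note such a $\Lambda$ exists precisely because $\|\mathcal{F}\|_T > 0$ forces $G_\mathcal{F}$ to be non-constant, so $\overline{G}_\mathcal{F} < e_{\mathrm{max}}(\mathcal{F})$, and moreover since $S(\Lambda,\mathcal{F})$ is $T$-invariant and (generically) has $\mathrm{vol} < \mathrm{vol}(L)$ we obtain a $T$-invariant point $p$ with $G_\mathcal{F} \leq \Lambda$ on a small simplex $\Delta_{1/m_1} \subset P$ for every Okounkov body $P$ centered at $p$. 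The point $p$ being $T$-fixed, the torus lifts to $\widehat{T}$ on the blowup $(\widehat Y,\widehat H_m)$ and $\widehat{\mathcal{F}}$ is $\widehat T$-invariant. Now feed this into the relative blowup formula, Proposition \ref{prop : the relative filtration asymptotic}: the choice of $p$ makes $\nu_{m,T}(p,\mathcal{F})$ small — more precisely one estimates $\nu_m(p,\mathcal{F})$ and each $\nu_m(p,\beta_i)$ using that $G_\mathcal{F} \leq \Lambda$ on the small simplex while the $\beta_i$ are linear, so the dominant contribution to $\nu_{m,T}(p,\mathcal{F}) - \tfrac{b_0}{a_0}$ is governed by $\Lambda - \overline{G}_\mathcal{F}$ up to the relative correction, which can be arranged to be strictly negative by the relative analogue of Székelyhidi's observation (the term $\tfrac{b_0}{a_0}$ and its relative shift encode exactly the barycenter). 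Combined with $\mathrm{Chow}_{mr,T}(\mathcal{F}) \to \mathrm{Chow}_{\infty,T}(\mathcal{F}) \leq 0$ (strictly, one uses $\mathrm{Chow}_{1,T}(\mathcal{F}_{(mr)}) \leq \mathrm{Chow}_{mr,T}(\mathcal{F})$ from Lemma \ref{lem : rel chow of approx leq rel chow of F}), one concludes that for $m$ large and then $r$ large, $\mathrm{Chow}_{r,T}(\widehat{\mathcal{F}}) < 0$, and by Lemma \ref{lem : rel chow of approx leq rel chow of F} again, $\mathrm{Chow}_{1,T}(\widehat{\mathcal{F}}_{(r)}) < 0$ for infinitely many $r$, i.e.\ $(\widehat Y,\widehat H_m)$ is relatively Chow unstable with respect to $\widehat T$.

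The final step is to derive a contradiction with positivity of blowups of extremal manifolds. Since $(X,L)=(Y,H)$ is extremal and $p$ is a $T$-fixed point with $T$ maximal, the results of Arezzo--Pacard--Singer \cite{arezzo2011extremal} produce extremal metrics on the blowup $(\widehat Y, \widehat H_m)$ for $m$ large (with the lifted maximal torus $\widehat T$), and Seyyedali's theorem \cite{seyyedali2017relative} on relative asymptotic Chow stability of extremal manifolds then forces $(\widehat Y,\widehat H_m)$ to be relatively asymptotically Chow \emph{semistable} with respect to $\widehat T$; a careful reading in fact gives relative Chow \emph{polystability}, so that $\mathrm{Chow}_{1,\widehat T}(\widehat{\mathcal{F}}_{(r)}) \geq 0$ for all large $r$, contradicting the previous paragraph unless $\|\widehat{\mathcal{F}}\|_{\widehat T} = 0$ — but by Proposition \ref{prop : asymptotic for rth inner products} and the blowup scaling $G_{\widehat{\mathcal{F}}}(x) = m G_\mathcal{F}(x/m)$ one checks $\|\widehat{\mathcal{F}}\|_{\widehat T}^2 = m^2 \|\mathcal{F}\|_T^2 + O(m^{2-n}) > 0$ for $m$ large. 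This completes the contradiction. The main obstacle I expect is the bookkeeping in the relative blowup estimate: one must show the \emph{relative} correction terms $\sum_i \tfrac{\langle\mathcal{F},\beta_i\rangle}{\langle\beta_i,\beta_i\rangle}(\nu_m(p,\beta_i) - b_{i,0}/a_0)$ do not overwhelm the gain from choosing $p$ via Corollary \ref{cor : relative appendix corollary}, i.e.\ that one can choose $\Lambda$ close enough to $e_{\mathrm{max}}$ so that, after subtracting the projection onto $N_\R(T)$, $\nu_{m,T}(p,\mathcal{F}) - b_0/a_0$ is bounded above by a negative constant independent of $m$; this is where the maximality of $T$ and the minimizing characterization of the reduced norm in Lemma \ref{lem : red norm is minimum} must be used, together with keeping the $r\to\infty$ and $m\to\infty$ limits in the correct order throughout. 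The case $n=1$ is excluded here and handled separately in Remark \ref{rmk : case n = 1}.
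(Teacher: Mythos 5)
Your overall skeleton (contradiction via approximations, Corollary \ref{cor : relative appendix corollary} to find a $T$-fixed point, the relative blowup formula of Proposition \ref{prop : the relative filtration asymptotic}, and the contradiction with Arezzo--Pacard--Singer plus Seyyedali) is the same as the paper's, but the key quantitative step is set up backwards. You choose the threshold with $\overline{G}_\mathcal{F} < \Lambda < e_{\mathrm{max}}(\mathcal{F})$ and even propose taking $\Lambda$ close to $e_{\mathrm{max}}$; this cannot work. The quantity $\nu_m(p,\mathcal{F})$ in Equation \eqref{eq : nu weight} is a \emph{difference} of boundary integrals over $\partial_0\Delta_{1/m}$ (total mass proportional to $n$) and $\partial_1\Delta_{1/m}$ (mass proportional to $1$), so to force the blowup term to be negative one needs, roughly, $n(\Lambda-\overline{G}_\mathcal{F})+(\overline{G}_\mathcal{F}-M_\mathcal{F})<0$, where $\Lambda$ is the upper bound for $G_\mathcal{F}$ near the origin and $M_\mathcal{F}$ is the essential infimum used to bound the $\partial_1$-integral from below. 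This forces $\Lambda$ to lie strictly \emph{below} the average (by a definite margin), which is exactly why the paper takes $\Lambda_\mathcal{F}=\frac{n-2}{n-1}\overline{G}_\mathcal{F}+\frac{1}{n-1}M_\mathcal{F}$, strictly between $M_\mathcal{F}$ and $\overline{G}_\mathcal{F}$ (this also guarantees $\Lambda_\mathcal{F}<e_{\mathrm{max}}(\mathcal{F})$ and the volume drop needed for Theorem \ref{thm : ample series theorem}). With your $\Lambda>\overline{G}_\mathcal{F}$ the bound $G_\mathcal{F}\le\Lambda$ on $\Delta_{1/m_1}$ carries no destabilizing information (e.g.\ $G_\mathcal{F}$ could be close to $\Lambda$, hence above average, on the whole small simplex), and moving $\Lambda$ toward $e_{\mathrm{max}}$ only weakens the bound; you also never invoke the lower bound by the essential infimum on the slanted face, without which no sign conclusion is possible.

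A second, real gap is your treatment of the relative correction terms $\sum_i \frac{\langle\mathcal{F},\beta_i\rangle}{\langle\beta_i,\beta_i\rangle}\bigl(\nu_m(p,\beta_i)-b_{i,0}/a_0\bigr)$: these are of the same order in $m$ as the gain from the blowup, so "showing they do not overwhelm the gain" is not something you can arrange by tuning $\Lambda$. The paper disposes of them at the outset by twisting $\mathcal{F}$ by $-\xi(\mathcal{F})$ from Lemma \ref{lem : red norm is minimum}, reducing to a filtration with $\langle\mathcal{F},N_\Z(T)\rangle=0$ and $\|\mathcal{F}\|>0$; then the corrections for the approximations $\mathcal{F}_{(s)}$ are $O(\delta)$, and the shifted filtration (subtracting $\overline{G}_{\mathcal{F}_{(s)}}$, which kills the leading coefficient of the weight and the $b_0/a_0$ term) makes the bookkeeping go through. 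You gesture at Lemma \ref{lem : red norm is minimum} but never perform this reduction, and instead track $\|\mathcal{F}_{(r)}\|_T$ along approximations, which is not what is needed. Finally, a minor point: Seyyedali's result is used only as relative asymptotic Chow \emph{semi}stability of the (extremal, by \cite{arezzo2011extremal}) blowup, which already contradicts the strictly negative Chow weights produced for infinitely many $r$ via Lemma \ref{lem : rel chow of approx leq rel chow of F}; no polystability or computation of $\|\widehat{\mathcal{F}}\|_{\widehat{T}}$ is required.
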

This is a relative version \cite[Theorem 11]{szekelyhidi2015filtrations} allowing extremal metrics and automorphisms.
Together with Proposition \ref{prop: DFT geq ChowinfT}, it implies the desired stability result, Theorem \ref{thm : realtive good filtration K stab of extr}.
\begin{proof}
    First, note that it suffices to prove
    \begin{equation*}
        \mathrm{Chow}_\infty(\mathcal{F})
        := \liminf_{r \to +\infty} \mathrm{Chow}_1(\mathcal{F}_{(r)})
        > 0,
    \end{equation*}
    for $T$-invariant filtrations $\mathcal{F}$ with $\langle \mathcal{F},N_\Z(T) \rangle = 0$ and $||\mathcal{F}|| > 0$.
    This may be done by replacing $\mathcal{F}$ with the twist $\mathcal{F}_{-\xi(\mathcal{F})}$ for $\xi(\mathcal{F)}$ in $N_\R(T)$ as defined in Lemma \ref{lem : red norm is minimum}.
    There is a slight subtlety that $\mathcal{F}_{-\xi(\mathcal{F})}$ is an $\R$-filtration, in general; we ignore this and hence assume that $\xi(\mathcal{F})$ is rational, as the argument applies in the same way in general, and $\xi(\mathcal{F})$ is rational in our applications.
    
    Now, assume by contradiction that $\mathrm{Chow}_\infty(\mathcal{F}) = 0$. As $||\mathcal{F}|| > 0$, the essential infimum $M_\mathcal{F}$ of the concave transform $G_\mathcal{F}$ is strictly smaller than its average $\overline{G}_\mathcal{F}$ over $P$.
    Note that the values $M_\mathcal{F}$ and $\overline{G}_\mathcal{F}$ are both independent of choice of Okounkov body.
    Let
    \begin{equation}\label{eq : def Lambda F}
        \Lambda_\mathcal{F}
        = \frac{A_n-2}{A_n} \overline{G}_\mathcal{F} + \frac{1}{A_n}M_\mathcal{F},
    \end{equation}
    for $A_n = \frac{5n-1}{2n}$.
    This number will serve as the value in Corollary \ref{cor : relative appendix corollary} to obtain an upper bound for the concave transform near the origin.
    Indeed, since $M_\mathcal{F} < \overline{G}_\mathcal{F}$, we have $\Lambda_\mathcal{F} < e_\mathrm{max}(\mathcal{F})$ so by Corollary \ref{cor : relative appendix corollary}, there exists a $T$-invariant point $p$ in $X$ such that, after choosing an Okounkov body $P$ centered around $p$, there is an integer $m_0 \geq 1$, such that
    \begin{equation*}
        G_\mathcal{F}|_{\Delta_{\frac{1}{m_0}}} \leq \Lambda_\mathcal{F}.
    \end{equation*}

    By assumption, for any $\delta > 0$ and $s_1 \geq 1$, there exist infinitely many integers $s \geq s_1$ such that $\mathrm{Chow}_1 (\mathcal{F}_{(s)}) < \delta$.
    We will proceed to set up appropriate choices of $\delta$ and $s_1$:
    let $\delta$ be a real number satisfying
    \begin{equation*}
        0 < \delta
        < \frac{\overline{G}_\mathcal{F} - M_\mathcal{F}}{A_n B_n},
    \end{equation*}
    where $B_n = \frac{7n-1}{2n}$.
    By \cite[Lemma 7]{szekelyhidi2015filtrations}, there exists an integer $s_0 \geq 1$ such that for all $s \geq s_0$, we have $\overline{G}_\mathcal{F} - \delta \leq \overline{G}_{\mathcal{F}_{(s)}} \leq \overline{G}_\mathcal{F}$.
    This gives rise to an a priori estimate
    \begin{equation*}
        \int_{\Delta_{\frac{1}{m}}\setminus\Delta_{\frac{1}{m}-\frac{n}{r}}} G_{\mathcal{F}_{(s)}}
        \geq \frac{nm^{1-n}}{r(n-1)!}(M_\mathcal{F} - \delta),
    \end{equation*}
    for any $r \geq 1$.
    Moreover, by \cite[Lemma 7]{szekelyhidi2015filtrations}, the integral formula of the inner product, the boundedness of the weights and orthogonality of $\mathcal{F}$ with respect to $N_\Z(T)$, there exists an integer $s_1 \geq s_0$, such that for all $s \geq 0$ and $m \geq m_0$, we have
    \begin{align*}
        \bigg| \sum_{i = 1}^d \frac{\langle \mathcal{F}_{(s)}, \beta_i \rangle}{\langle \beta_i, \beta_i \rangle} \mathrm{Chow}_m(\beta_i) \bigg|
        &\leq \delta,
        \\
        \bigg| \sum_{i = 1}^d \frac{\langle \mathcal{F}_{(s)}, \beta_i \rangle}{\langle \beta_i, \beta_i \rangle} \Big( \nu(\beta_i) - \frac{b_{i,0}}{a_0} \Big) \bigg|
        &\leq \frac{a_0 \delta}{n-1},
    \end{align*}
    where $a_0$ is from Equation \ref{eq : expansion for a_0 b_0 c_0}.
    With these choices of $\delta$ and $s_1$, there exist infinitely many integers $s \geq s_0$ such that $\mathrm{Chow}_1 (\mathcal{F}_{(s)}) < \delta$.
    Let $\mathcal{F}'$ be the $\overline{G}_\mathcal{F}$-shifted filtration of $\mathcal{F}$, given as
    \begin{equation*}
        \mathcal{F}'^{\,\lambda} H^0(Y,kH) = \mathcal{F}^{\lambda + \overline{G}_{\mathcal{F}}k} H^0(Y,kH).
    \end{equation*}
    This filtration has the same Chow weights, while the leading coefficient of its weight function vanishes.
    By taking $s$ to be large enough and changing $m$, we may assume that $\frac{s}{m} =: r$ is an integer.
    
    The relative asymptotic expansion of Theorem \ref{thm : the relative filtration asymptotic} for this filtration then takes the form
    \begin{equation*}
        \mathrm{Chow}_{r,T} (\widehat{\mathcal{F}'})
        \leq  2\delta
        + \nu_{T}(p,\mathcal{F}_{(s)}') \frac{m^{1-n}}{2(n-2)!a_0}
        + o(m^{-n} r^0).
    \end{equation*}

    We now prove that the $\nu_T$-weight is negative:
    write $\gamma_n = \frac{\overline{G}_\mathcal{F} - M_\mathcal{F}}{a_n} > 0$, which satisfies both $\Lambda_\mathcal{F} = \overline{G}_\mathcal{F} - \gamma_n$ and $M_\mathcal{F} = \overline{G}_\mathcal{F} - A_n\gamma_n$. 
    Using the relation $G_{\mathcal{F}_{(s)}'} = G_{\mathcal{F}_{(s)}} - \overline{G}_{\mathcal{F}_{(s)}}$,
    the summands appearing in the weight satisfy the estimates
    \begin{align*}
        \frac{(3n-1)}{2(n-1)!} \Lambda_{\mathcal{F}'_{(s)}}
        &\leq \frac{n}{(n-1)!}  \frac{3n-1}{2n} (\delta - \gamma_n),
        \\
        \frac{r}{m^{n-1}}\int_{\Delta_{\frac{1}{m}}\setminus\Delta_{\frac{1}{m}-\frac{n}{r}}} G_{\mathcal{F}'_{(s)}}
        &\geq -\frac{n}{(n-1)!}(\delta + A_n\gamma_n).
    \end{align*}
    Using that $G_{\mathcal{F}'_{(s)}} \leq (\delta - \gamma_n)$ on $\Delta_{\frac{1}{m_0}}$.
    Putting this into the weight, we obtain
    \begin{equation*}
        \nu_T(p,\mathcal{F}_{(s)}')
        \leq \frac{a_0}{n-1} ( b_n\delta m^{-1} - \gamma_n),
    \end{equation*}
    noting that the right-hand side is negative as $\delta < \frac{\gamma_n}{b_n}$ by construction.
    
    Altogether, we conclude that 
    \begin{equation}\label{eq : final estimate for ChrhatFsPrime}
        \mathrm{Chow}_{r,\widehat{T}}(\widehat{\mathcal{F}'})
        \leq \Big( 2\delta + \frac{a_0}{n-1}(b_n\delta - m\gamma_n) m^{-n}
        \Big)
        + O(m^{1-n}r^{-1}).
    \end{equation}
    
    Now, choose $m \geq m_1$ to be large enough such the leading term above is negative and the blowup $(\widehat{Y}, \widehat{sH}_m) = (\mathrm{Bl}_p Y, msH - E)$ is extremal \cite[Theorem 2.1]{arezzo2011extremal}.
    But this is impossible, as \cite[Corollary 1.2]{seyyedali2017relative} implies that the blowup is asymptotically relatively Chow semistable, but Equation \eqref{eq : final estimate for ChrhatFsPrime} is negative for infinitely many $r$, contradicting Lemma \ref{lem : rel chow of approx leq rel chow of F}.
\end{proof}
We used the fact that the torus is maximal in the final step in order to apply \cite{arezzo2011extremal} and \cite{seyyedali2017relative} (resp. \cite{Mab18,ST21,Has21}).

\begin{remark}\label{rmk : case n = 1}
    The result in the case $n = 1$ may be reduced to the higher-dimensional result $n = 2$ as in \cite[Page 18]{szekelyhidi2015filtrations}:
    any $T$-invariant filtration $\mathcal{F}$ of $(Y,H)$ induces a canonical $T \times T$-invariant filtration $\mathcal{F}'$ on the product $(Y \times Y, \mathrm{pr}_1^*H \otimes \mathrm{pr}_2^*H)$ satisfying $\mathrm{Chow}_{r,T \times T}(\mathcal{F}') = \mathrm{Chow}_{r,T}(\mathcal{F})$.
\end{remark}

\section{Main results}

For the final section, we prove the main results of the introduction, namely Theorem \ref{thm : main theorem} and Corollary \ref{cor : main corollary}.
For this, we first examine the relationship between models and filtrations and their twists by one-parameter subgroups.

\subsection{Models and filtrations}

We start by giving the construction of a good filtration from two models, due to Hattori \cite{hattori2024minimizing}.
Let $R$ be a discrete valuation ring with fraction field $K$ and residue field $\C$ and let $(X,L)$ be a polarized $K$-variety.
Let $(\mathcal{X},\mathcal{L})$ and $(\mathcal{X}',\mathcal{L}')$ be models of $(X,L)$ and assume that $L$, $\mathcal{L}$ and $\mathcal{L}'$ are ($\Z$-)line bundles.
 
We repeat the construction in Equation \eqref{eq : res of indeterm} by letting $\mathcal{X}' \xleftarrow{\mu'} \mathcal{Y} \xrightarrow{\mu} \mathcal{X}$ be a resolution of indeterminacies of the canonical birational map $\mathcal{X}' \dashrightarrow \mathcal{X}$ over $\Spec R$, and write $\mu^*\mathcal{L} = \mu'^*\mathcal{L}' + E - a\mathcal{Y}_0$ for an effective divisor $E$ in $\mathcal{Y}_0$ and an integer $a \geq 1$.
We obtain a natural embedding
\begin{equation}\label{eq : embedding i}
    i : H^0(\mathcal{Y},k\mathcal{M}) \to H^0(\mathcal{X},k\mathcal{L}),
\end{equation}
for $\mathcal{M} = \mu'^* \mathcal{L}' - a \mathcal{Y}_0$.
Choose a uniformizer $t$ in $R$.
\begin{definition}\label{def : definition of Hattoris good filtration}
    The \textit{good filtration associated to the models $(\mathcal{X},\mathcal{L})$ and $(\mathcal{X}',\mathcal{L}')$} is the filtration $\mathcal{F} = \mathcal{F}_{(\mathcal{X},\mathcal{L}),(\mathcal{X}',\mathcal{L}')}$ of $(\mathcal{X}_0,\mathcal{L}_0)$, given as
    \begin{equation*}
        \mathcal{F}^\lambda H^0(\mathcal{X}_0,k\mathcal{L}_0)
        = \{ s|_{\mathcal{X}_0} \;|\; 
            s \in H^0(\mathcal{X},k\mathcal{L}) \text{ and } 
            t^{-\lambda}s \in i(H^0(\mathcal{Y},k\mathcal{M}))
        \},
    \end{equation*}
    viewing $t$ as a regular function on $\mathcal{X}$ by pulling back along $\mathcal{X} \to \Spec R$.
\end{definition}
In the case where $L$, $\mathcal{L}$ and $\mathcal{L}'$ are $\Q$-line bundles, we obtain a filtration of $(\mathcal{X}_0,r\mathcal{L}_0)$ for a sufficiently divisible positive integer $r$.
The filtration was constructed in \cite[Theorem 3.5]{hattori2024minimizing} in order to prove Theorem \ref{thm : hattoris theorem}, where it was defined for families over smooth projective curves.
However, as the original construction passes through restriction to an affine neighborhood, the original construction coincides with the above definition.
Hattori's good filtration generalizes the filtration constructed by Blum--Xu \cite{blum2019uniqueness} for degenerations of Fano varieties.

The main property of Hattori's filtration is the following:
\begin{theorem}\label{thm : hattoris good filtration properties}
    The filtration $\mathcal{F} = \mathcal{F}_{(\mathcal{X},\mathcal{L}),(\mathcal{X}',\mathcal{L}')}$ is good and satisfies
    \begin{equation*}
        w_\mathcal{F}(k) = - \dim \! \big( H^0(\mathcal{X}, k\mathcal{L})/i(H^0(\mathcal{Y},k\mathcal{M})) \big),
    \end{equation*}
    for $k \gg 0$. Moreover $||\mathcal{F}|| > 0$ if and only if the models are not isomorphic.
\end{theorem}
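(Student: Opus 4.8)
The plan is to recast Hattori's construction entirely in terms of lattices over the discrete valuation ring $R$, and then to read off the weight function and the $L^2$-norm from elementary divisors. Set $M_k = H^0(\mathcal{X},k\mathcal{L})$ and $M'_k = H^0(\mathcal{X}',k\mathcal{L}')$, which for sufficiently divisible $k$ are free $R$-modules (automatically, over a DVR) of rank $N_k = h^0(\mathcal{X}_0,k\mathcal{L}_0)$ inside $V_k := H^0(X,kL)$, and whose reductions modulo the uniformizer $t$ are $H^0(\mathcal{X}_0,k\mathcal{L}_0)$, $H^0(\mathcal{X}'_0,k\mathcal{L}'_0)$ respectively, by cohomology and base change. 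First I would identify the image of the embedding $i$ of \eqref{eq : embedding i}: pushing forward by $\mu'$ and using normality of $\mathcal{X}$, $\mathcal{X}'$, one has $i(H^0(\mathcal{Y},k\mathcal{M})) = M_k \cap t^{ak}M'_k$, a full-rank sublattice $\Lambda_k$ of $M_k$ of finite colength. Unwinding Definition \ref{def : definition of Hattoris good filtration}, $\mathcal{F}^\lambda H^0(\mathcal{X}_0,k\mathcal{L}_0)$ is exactly the image under reduction modulo $t$ of the $R$-submodule $M_k \cap t^\lambda \Lambda_k$ of $V_k$.

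Now I would apply the elementary-divisor theorem over $R$ to $\Lambda_k \subset M_k$: choose an $R$-basis $e_1,\dots,e_{N_k}$ of $M_k$ and integers $0 \le d_1 \le \cdots \le d_{N_k}$ with $t^{d_1}e_1,\dots,t^{d_{N_k}}e_{N_k}$ a basis of $\Lambda_k$. Then $M_k \cap t^\lambda \Lambda_k = \bigoplus_j R\, t^{\max(d_j+\lambda,\,0)}e_j$, whose image modulo $t$ is spanned by the $\overline{e_j}$ with $d_j \le -\lambda$. Hence $\dim\big(\mathcal{F}^\lambda H^0(\mathcal{X}_0,k\mathcal{L}_0)/\mathcal{F}^{\lambda+1}H^0(\mathcal{X}_0,k\mathcal{L}_0)\big) = \#\{\,j : d_j = -\lambda\,\}$, and therefore
\begin{equation*}
    w_\mathcal{F}(k) \;=\; \sum_{\lambda \in \Z}\lambda\,\#\{\,j : d_j = -\lambda\,\} \;=\; -\sum_{j=1}^{N_k} d_j \;=\; -\,\mathrm{length}_R\!\big(M_k/\Lambda_k\big),
\end{equation*}
which is the asserted formula, the $R$-length of the finite-length torsion module $M_k/\Lambda_k$ being its $\C$-dimension. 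The same picture checks the filtration axioms at once: unitarity and multiplicativity are immediate from the construction, and linear boundedness holds because $\mathcal{Z}$, hence $E$, is supported in $V(t)$, so $t^{Ck}M_k \subset \Lambda_k$ for some $C > 0$ and all $d_j \le Ck$. For goodness I would cite \cite[Theorem 3.5]{hattori2024minimizing}, whose proof is local on $\Spec R$ and applies verbatim over DVRs; together with the weight formula and Theorem \ref{thm : difference of CM degrees} this identifies the two leading coefficients of $w_\mathcal{F}$ with $-b_0$, $-b_1$, so that $\mathrm{DF}(\mathcal{F})$ is precisely the difference of CM degrees of the two models.

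For the norm equivalence I would use the Okounkov-body formula $||\mathcal{F}||^2 = \frac{1}{\mathrm{vol}(P)}\int_P (G_\mathcal{F}-\overline{G}_\mathcal{F})^2$, so that $||\mathcal{F}|| = 0$ holds precisely when $G_\mathcal{F}$ is constant on $P$ (being a.e. equal to its average and continuous on the interior). If the models are isomorphic then $M_k = M'_k$ in $V_k$, and the elementary-divisor picture gives $\Lambda_k = t^{ak}M_k$ and hence that $\mathcal{F}$ is the trivial filtration shifted by $a$; so $G_\mathcal{F} \equiv -a$ and $||\mathcal{F}|| = 0$. Conversely, assume $||\mathcal{F}|| = 0$, so $G_\mathcal{F} \equiv c$ with $c = b_0/a_0 \in \Q$. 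By the elementary-divisor description this forces, for every $\varepsilon > 0$, all but $o(k^n)$ of the $d_j(k)$ to lie in $[\,(-c)k-\varepsilon k,\ (-c)k+\varepsilon k\,]$. I would then argue --- this being the crucial point --- that, since $-E$ is $\mu$-relatively ample, this asymptotic concentration forces $E$ to be a rational multiple of the whole central fibre $\mathcal{Y}_0$, equivalently that $\Lambda_k = M_k \cap t^{ak}M'_k$ already equals $t^{(-c)k}M_k$ for all $k$ in a sufficiently divisible sequence (so that $M_k$ and $M'_k$ differ only by a fixed power of $t$). Comparing the relative section rings $\bigoplus_k M_k$ and $\bigoplus_k M'_k$ and using that $\mathcal{X}$, $\mathcal{X}'$ are normal, hence recovered as $\Proj_{\Spec R}$ of these rings, one then concludes that $(\mathcal{X},\mathcal{L})$ and $(\mathcal{X}',\mathcal{L}')$ are isomorphic as models.

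The step I expect to be the genuine obstacle is exactly the converse half above: promoting the asymptotic, $L^2$-type statement ``$G_\mathcal{F}$ is constant'' (that $E$ is ``asymptotically'' a multiple of the central fibre) to the exact statement that $E$ is a multiple of $\mathcal{Y}_0$, so that the two ample models coincide. I expect this to require an honest use of the $\mu$-ampleness of $-E$ together with the normality of the models --- morally the analogue, in the language of models, of the classical fact that a normal test configuration of vanishing $L^2$-norm is a product --- and it is where the argument will need the most care; the first three steps are, by contrast, essentially bookkeeping once the lattice picture is in place.
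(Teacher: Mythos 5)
Your first half is sound: the lattice/elementary-divisor bookkeeping (with $\Lambda_k = i(H^0(\mathcal{Y},k\mathcal{M})) = M_k \cap t^{ak}M'_k$ and the weight read off as $-\mathrm{length}_R(M_k/\Lambda_k)$) is equivalent to what the paper does via an iterated split exact sequence applied to $t^\lambda H^0(\mathcal{X},k\mathcal{L})$ and $t^\lambda\mathcal{F}^{-\lambda}H^0(\mathcal{X},k\mathcal{L})$, and citing Hattori for goodness matches the paper's stance that his argument transfers verbatim to DVRs. The "isomorphic $\Rightarrow$ $\|\mathcal{F}\|=0$" direction is also fine.

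The genuine gap is the converse, and you have identified it yourself without closing it: you propose to upgrade "$G_\mathcal{F}$ constant" to the exact lattice statement $\Lambda_k = t^{(-c)k}M_k$ (equivalently, that $E$ is a rational multiple of $\mathcal{Y}_0$), but give no argument, and this exact statement is both stronger than needed and not a formal consequence of an $o(k^n)$ concentration of elementary divisors. The paper avoids it entirely by a numerical contradiction: after replacing $\mathcal{Y}$ by the ample model so that $E$ is nonzero, $\mu$-exceptional and $-E$ is $\mu$-ample, one differentiates the intersection-theoretic function $f(s)$ built from $\mathcal{M}_s = \mu^*\mathcal{L} - sE$ to get $f'(s) = (n+1)E\cdot\mathcal{M}_s^n > 0$, hence $\mu^*\mathcal{L}^{n+1} - \mathcal{M}^{n+1} > 0$ whenever the models are not isomorphic. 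On the other hand, if $G_\mathcal{F}$ were constant, the facts that $G_\mathcal{F}\le 0$ (the filtration sits in nonpositive weights) and that $\mathcal{F}^0 H^0(\mathcal{X}_0,k\mathcal{L}_0)\neq 0$ for large $k$ force $G_\mathcal{F}\equiv 0$ a.e., so the leading weight coefficient $b_0$ vanishes; via the Knudsen--Mumford expansion and the Deligne-pairing identification of $\deg(\lambda_{n+1}(\mathcal{X},\mathcal{L})-\lambda_{n+1}(\mathcal{Y},\mathcal{M}))$ with $\mu^*\mathcal{L}^{n+1}-\mathcal{M}^{n+1}$ (Boucksom--Eriksson), this gives $\mu^*\mathcal{L}^{n+1}-\mathcal{M}^{n+1}=0$, a contradiction. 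Note also that your sketch does not pin down the constant: the paper's normalization step ($c=0$, not merely $c=b_0/a_0$) is what makes the comparison with the intersection number bite. Without an argument of this kind (positivity of $E\cdot\mathcal{M}_s^n$ plus the identification of $b_0$ with an intersection number), the equivalence $\|\mathcal{F}\|>0 \Leftrightarrow$ non-isomorphic remains unproven in your proposal.
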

This was proven in \cite[Theorem 3.5 and Corollary 3.9]{hattori2024minimizing} for families over smooth projective curves.
We will see that the same proof strategy applies to models over DVRs.
\begin{proof}
    We apply the split short exact sequence of coherent $\mathcal{O}_{\Spec R}$-modules
    \begin{equation*}
        0
        \to t\mathcal{V} / (\mathcal{W} \cap t\mathcal{V})
        \to \mathcal{V} / \mathcal{W}
        \to (\mathcal{V} / t\mathcal{V}) / \mathrm{im}(\mathcal{W} \to \mathcal{V}/t\mathcal{V})
        \to 0,
    \end{equation*}
    iteratively to the cases $\mathcal{V}_\lambda = t^\lambda H^0(\mathcal{X},k\mathcal{L})$ and $\mathcal{W}_\lambda = t^\lambda \mathcal{F}^{-\lambda}H^0(\mathcal{X},k\mathcal{L})$ for $\lambda \geq 0$, where $\mathcal{F}^{\lambda}H^0(\mathcal{X},k\mathcal{L}) := \{ s \;|\; t^{-\lambda} s \in i(H^0(\mathcal{Y},k\mathcal{M})) \}$.
    We have
    \begin{equation*}
    \begin{aligned}
        t\mathcal{V}_\lambda / (\mathcal{W}_\lambda \cap t \mathcal{V}_\lambda)
        &= \mathcal{V}_{\lambda + 1}/\mathcal{W}_{\lambda + 1},
        \\
        \mathcal{V}_\lambda / t\mathcal{V}_\lambda
        &= H^0(\mathcal{X}_0,k\mathcal{L}_0),
        \\
        \mathrm{im}( \mathcal{W}_\lambda \to \mathcal{V}_\lambda/ t\mathcal{V}_\lambda)
        &= t^\lambda \mathcal{F}^{-\lambda}H^0(\mathcal{X}_0,k\mathcal{L}_0),
    \end{aligned}
    \end{equation*}
    so applying the short exact sequence, we obtain
    \begin{equation}\label{eq : isomorphism for weight formula}
    \begin{aligned}
        \mathcal{V}_0 / \mathcal{W}_0
        &\simeq 
        \mathcal{V}_1 / \mathcal{W}_1
        \oplus H^0(\mathcal{X}_0,k\mathcal{L}_0) / t^0 \mathcal{F}^{-0}H^0(\mathcal{X}_0,k\mathcal{L}_0),
        \\
        &\simeq \dots \simeq
        \bigoplus_{\lambda \geq 0}  \big( H^0(\mathcal{X}_0,k\mathcal{L}_0) / ( t^\lambda \mathcal{F}^{-\lambda} H^0(\mathcal{X}_0,k\mathcal{L}_0)) \big).
    \end{aligned}
    \end{equation}
    By construction, the left-hand side is $H^0(\mathcal{X},kr\mathcal{L}) / i(H^0(\mathcal{Y},kr\mathcal{M}))$ and the dimension of the right-hand side the weight $w_\mathcal{F}(k)$.

    For the norm of the filtration, it suffices to show that if the difference $\mu^*\mathcal{L} - \mathcal{M}$ supported in $\mathcal{Y}_0$ is not $\Q$-linearly trivial, then the concave transform $G_\mathcal{F}$ is not constant.
    Similar to Section \ref{subsubsec : models and dvrs}, we may assume that $E$ is non-zero, $\mu$-exceptional and $-E$ is $\mu$-ample by taking an ample model.

    Assume that $G_\mathcal{F}$ is constant and let $f : [0,1] \to \R$ be the function
    \begin{equation*}
        f(s)
        = \mu^*\mathcal{L}^{n-1} - \mathcal{M}_s^{n+1}
    \end{equation*}
    for $\mathcal{M}_s = \mu^*\mathcal{L} - sE$,
    where the intersection number is taken in the sense of, for example, \cite[Section 2]{dervan2024arcs}, meaning that $\mu^*\mathcal{L}^{n-1} - \mathcal{M}_s^{n+1}$ is defined to be the intersection number $(\mu^*\L-\M_s)\cdot (\sum_j \mu^*\L^j\cdot \M^{n-j}_s)$, where we view $\mu^*\L-\M$ is a divisor supported on $\Y_0$.
    This function is smooth with derivative
    \begin{align*}
        f'(s) = (n+1)E \cdot \mathcal{M}_s^n,
    \end{align*}
    which is strictly positive for all $0 < s < 1$ by ampleness. 
    Since the function satisfies $f(0) = 0$ and $f(1) = \mu^*\mathcal{L}^{n+1} - \mathcal{M}^{n+1}$, we thus obtain
    \begin{align*}
        \mu^*\mathcal{L}^{n+1}
        - \mathcal{M}^{n+1}
        > 0.
    \end{align*}
    
    On the other hand, the construction of the good filtration yields
    \begin{equation*}
        \mathcal{F}^0 H^0(\X_0,k\L_0)
        = \mathrm{im}(H^0(\mathcal{Y},k\mathcal{M}) \to H^0(\X_0,k\L_0)).
    \end{equation*}
    The kernel of the map is $H^0(\mathcal{Y},k\mathcal{M} - \hat{\X}_0)$, where $\hat{\X}_0$ is the strict transform of $\X_0$ along $\mu$.
    Since $H^0(\mathcal{Y},k\mathcal{M} - \hat{\X}_0) \neq H^0(\mathcal{Y},k\mathcal{M})$ for sufficiently large $k$, we have $\mathcal{F}^0 H^0(\X_0,k\L_0) \neq 0$ for sufficiently large $k$, hence there is a rational point $x$ in $P$ such that $G_\mathcal{F}(x) = 0$.
    
    Moreover, we have $G_\mathcal{F} \leq 0$ on the interior of $P$, since the filtration is concentrated in weight $\lambda \leq 0$.
    By concavity and constancy, this implies that $G_\mathcal{F}$ is identically zero almost everywhere.

    Combining Equation \eqref{eq : weight function as sum of gk functions} with the first part of the theorem and Equation \eqref{eq : coeff b_0 for models}, this shows that
    \begin{equation}
        \deg \big( \lambda_{n+1}(\mathcal{X},\mathcal{L}) - \lambda_{n+1}(\mathcal{Y},\mathcal{M}) \big)
        = 0.
    \end{equation}
    However, we have $\deg ( \lambda_{n+1}(\mathcal{X},\mathcal{L}) - \lambda_{n+1}(\mathcal{Y},\mathcal{M}) ) = \mu^*\mathcal{L}^{n+1} - \mathcal{M}^{n+1}$ using the interpretation of $\lambda_{n+1}$ in terms of Deligne pairings and a result of Boucksom--Eriksson \cite[Corollary 8.21 and A.23]{BE21}, a contradiction.
\end{proof}

Combining Theorem \ref{thm : hattoris good filtration properties} with Theorem \ref{thm : difference of CM degrees}, we obtain
\begin{equation}\label{eq : DF of Hattoris filtratoin = CM - CM}
    \mathrm{DF}(\mathcal{F})
    = \mathrm{CM}(\mathcal{X}',\mathcal{L}')
    - \mathrm{CM}(\mathcal{X},\mathcal{L}).
\end{equation}
In particular, together with the stability result in \cite[Theorem 2.18]{hattori2024minimizing}, one obtains Theorem \ref{thm : hattoris theorem} for models defined over DVRs that may not extend to a smooth projective curve.

\subsection{Twisting Hattori's good filtration.}\label{subsec : twisting hattoris good filtratoin}

In order to apply Theorem \ref{thm : realtive good filtration K stab of extr}, we examine twists of Hattori's good filtration.
Let $\beta$ be a $\G_{m,K}$-action on $(X,L)$ and assume that the models $(\mathcal{X},\mathcal{L})$ and $(\mathcal{X}',\mathcal{L}')$ are $\beta$-equivariant.
Choose a $\G_{m,R}$-equivariant resolution of indeterminacies $\mathcal{Y}$ of the equivariant birational map $\mathcal{X}' \dashrightarrow \mathcal{X}$, which makes the embedding in Equation \eqref{eq : embedding i} $\C^*$-equivariant.
Write $\beta_0$ for the $\C^*$-action $(\mathcal{X}_0,\mathcal{L}_0)$ obtained by restricting the $\G_{m,R}$-action on $(\mathcal{X},\mathcal{L})$ to the special fiber.
\begin{lemma}\label{lem : sq weight of hattoris good filtration}
    The filtration $\mathcal{F} = \mathcal{F}_{(\mathcal{X},\mathcal{L}),(\mathcal{X}',\mathcal{L}')}$ is $\beta_0$-invariant and satisfies
    \begin{equation*}
        w^2_{\mathcal{F},\beta_0}(k)
        = -\mathrm{wt}_\beta \big( H^0(\mathcal{X}, k\mathcal{L})/ i(H^0(\mathcal{Y},k\mathcal{M})) \big)
    \end{equation*}
    for $k \gg 0$, where $\mathrm{wt}_\beta ( H^0(\mathcal{X}, k\mathcal{L})/H^0(\mathcal{Y},k\mathcal{M}) )$ denotes the total weight of the $\C^*$-action on the quotient $H^0(\mathcal{X}, k\mathcal{L})/i(H^0(\mathcal{Y},k\mathcal{M}))$ induced by $\beta$.
\end{lemma}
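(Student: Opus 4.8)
The plan is to run the proof of Theorem \ref{thm : hattoris good filtration properties} again, upgrading every step to a $\C^*$-equivariant statement for the $\beta$-action, and then to extract the mixed square-weight by one weighted count. I would organize this in three parts, plus a remark on the one delicate point.

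\emph{Invariance.} Since the resolution $\mathcal{Y}$ is chosen $\G_{m,R}$-equivariantly, the embedding $i$ of Equation \eqref{eq : embedding i} is $\C^*$-equivariant, so $i(H^0(\mathcal{Y},k\mathcal{M}))$ is a $\C^*$-invariant $R$-submodule of $H^0(\mathcal{X},k\mathcal{L})$; and because $t$ is pulled back from $\Spec R$ while the $\G_{m,R}$-action is over $\Spec R$, the section $t$ has $\C^*$-weight $0$. Hence each lift $\{\, s : t^{-\lambda}s \in i(H^0(\mathcal{Y},k\mathcal{M})) \,\}$ is $\C^*$-invariant. For $k \gg 0$ (the same regime in which Theorem \ref{thm : hattoris good filtration properties} identifies $\mathcal{V}_0/t\mathcal{V}_0$ with $H^0(\mathcal{X}_0,k\mathcal{L}_0)$, via relative Serre vanishing) the $\beta_0$-action on $H^0(\mathcal{X}_0,k\mathcal{L}_0)=H^0(\mathcal{X},k\mathcal{L})/tH^0(\mathcal{X},k\mathcal{L})$ is the reduction of the $\beta$-action; thus $\mathcal{F}^\lambda H^0(\mathcal{X}_0,k\mathcal{L}_0)$, being the image of a $\C^*$-invariant submodule, is $\beta_0$-invariant.

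\emph{An equivariant form of the key isomorphism.} Every module $\mathcal{V}_\lambda,\mathcal{W}_\lambda$ appearing in the proof of Theorem \ref{thm : hattoris good filtration properties} is a $\C^*$-invariant $R$-submodule of $H^0(\mathcal{X},k\mathcal{L})$ (again because $i(H^0(\mathcal{Y},k\mathcal{M}))$ and $t$ are), and the identification $\mathcal{V}_\lambda/t\mathcal{V}_\lambda\cong H^0(\mathcal{X}_0,k\mathcal{L}_0)$ used there is division by $t^\lambda$, a weight-preserving operation. Hence the iterated short exact sequences, and the resulting isomorphism of $\C^*$-modules refining Equation \eqref{eq : isomorphism for weight formula},
\[
H^0(\mathcal{X},k\mathcal{L})/i(H^0(\mathcal{Y},k\mathcal{M})) \;\simeq\; \bigoplus_{\lambda\geq 0}\big( H^0(\mathcal{X}_0,k\mathcal{L}_0)/\mathcal{F}^{-\lambda}H^0(\mathcal{X}_0,k\mathcal{L}_0)\big),
\]
are $\C^*$-equivariant.

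\emph{The weighted count.} Taking total $\beta$-weights of both sides and resolving each quotient $H^0(\mathcal{X}_0,k\mathcal{L}_0)/\mathcal{F}^{-\lambda}H^0(\mathcal{X}_0,k\mathcal{L}_0)$ into its $\beta_0$-graded pieces $\mathcal{F}^\nu S_k/\mathcal{F}^{\nu+1}S_k$ over $\nu<-\lambda$, with $S_k=H^0(\mathcal{X}_0,k\mathcal{L}_0)$, gives
\[
\mathrm{wt}_\beta\big( H^0(\mathcal{X},k\mathcal{L})/i(H^0(\mathcal{Y},k\mathcal{M}))\big) = \sum_{\nu,\mu}\mu\,\dim\!\big((\mathcal{F}^\nu S_k/\mathcal{F}^{\nu+1}S_k)_\mu\big)\cdot\#\{\,\lambda\geq 0 : \lambda<-\nu\,\}.
\]
Since Hattori's good filtration is supported in weights $\nu\leq 0$ (as recorded in the proof of Theorem \ref{thm : hattoris good filtration properties}), the inner cardinality equals $-\nu$ wherever the graded piece is nonzero, so the right-hand side collapses to $-w^2_{\mathcal{F},\beta_0}(k)$ by the definition of the mixed square-weight, which is the claim. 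The computations here are essentially bookkeeping; the points requiring genuine care are the weight-$0$-ness of $t$ (so that all the identifications are weight-preserving) and the final double-sum rearrangement, where the hypothesis that the filtration lives in non-positive weights is exactly what converts $\#\{\lambda\geq 0:\lambda<-\nu\}$ into $-\nu$ and produces the correct sign.

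An equivalent and perhaps more transparent route, which I might present instead, avoids Equation \eqref{eq : isomorphism for weight formula} entirely: choose a $\C^*$-homogeneous $R$-basis $e_1,\dots,e_N$ of $H^0(\mathcal{X},k\mathcal{L})$ together with integers $a_j\geq 0$ such that $(t^{a_j}e_j)$ is an $R$-basis of $i(H^0(\mathcal{Y},k\mathcal{M}))$ — elementary divisors carried out inside each $\beta$-weight space. Then $\overline{e}_j\in\mathcal{F}^\lambda S_k$ precisely when $\lambda\leq -a_j$, so $\dim(\mathcal{F}^{-a}S_k/\mathcal{F}^{-a+1}S_k)_w = \#\{\, j : a_j=a,\ \mathrm{wt}_\beta(e_j)=w\,\}$, while $H^0(\mathcal{X},k\mathcal{L})/i(H^0(\mathcal{Y},k\mathcal{M}))\cong\bigoplus_j R/t^{a_j}R$ with $e_j$ in weight $\mathrm{wt}_\beta(e_j)$; both sides of the asserted identity then equal $-\sum_j a_j\,\mathrm{wt}_\beta(e_j)$.
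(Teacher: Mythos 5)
Your main argument is correct and is essentially the paper's own proof: $\beta_0$-invariance follows because each $\mathcal{F}^\lambda H^0(\mathcal{X}_0,k\mathcal{L}_0)$ is the image of a $\C^*$-invariant subspace under an equivariant map, and the formula for $w^2_{\mathcal{F},\beta_0}$ is obtained by upgrading the isomorphism \eqref{eq : isomorphism for weight formula} to one of $\C^*$-modules (using that $t$ has weight zero) and then doing the weight bookkeeping, which the paper leaves implicit but you carry out correctly, including the use of $\mathcal{F}^\lambda S_k=0$ for $\lambda>0$ to fix the sign. Your alternative elementary-divisor argument via a homogeneous adapted basis is also valid, but it is an aside rather than a genuinely different overall strategy.
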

\begin{proof}
    Note that $\beta_0$-invariance of $\mathcal{F}$ is immediate since $\mathcal{F}^\lambda H^0(\mathcal{X}_0,k\mathcal{L}_0)$ is the image $\C^*$-equivariant map
    \begin{equation*}
        \mathcal{F}^{\lambda}H^0(\mathcal{X},k\mathcal{L}) = \{ s \;|\; t^{-\lambda} s \in i(H^0(\mathcal{Y},k\mathcal{M})) \} \to H^0(\mathcal{X}_0,k\mathcal{L}_0).
    \end{equation*}
    
    Following the definition of the mixed weight-square function, it suffices to prove the identification
    \begin{equation*}
        (H^0(\mathcal{X},k\mathcal{L})
        / i(H^0(\mathcal{Y},k\mathcal{M})))_\mu
        \simeq \bigoplus_{\lambda \geq 0}  t^\lambda \big( H^0(\mathcal{X}_0,k\mathcal{L}_0)_\mu / (\mathcal{F}^{-\lambda} H^0(\mathcal{X}_0,k\mathcal{L}_0))_\mu \big),
    \end{equation*}
    for large $k$, where the subscripts $\mu$ in $\Z$ indicate weight-space components with respect to the $\C^*$-actions.
    This follows directly from Equation \eqref{eq : isomorphism for weight formula} as taking weights passes through direct sums and quotients.
\end{proof}
As in Section \ref{subsubsec: twisted and equivariant models}, the choice of uniformizer $t$ in $R$ induces an automorphism $\beta(t) : (X,L) \to (X,L)$. The same scheme $\mathcal{Y}$ fits into a new resolution of indeterminacies
\begin{equation*}
\begin{tikzcd}[row sep = 0.1em]
    & \mathcal{Y} \ar[dddddddddddddddddl,swap,"\mu'"] \ar[dddddddddddddddddr,"\mu_\beta"] &
    \\\\\\\\\\\\\\\\\\\\\\\\\\\\\\\\\\
    \mathcal{X}' \ar[rr,dashed,"\psi_\beta"] && \mathcal{X},
\end{tikzcd}
\end{equation*}
where $\psi_\beta : \mathcal{X}' \dashrightarrow \mathcal{X}$ is given by $\mathcal{X}'_K \simeq X \xrightarrow{\beta(t)} X \simeq \mathcal{X}_K$ and $\mu'$ is as before.
The birational morphism $\mu_\beta$ still satisfies $\mu^*_\beta \mathcal{L} = \mu'^*\mathcal{L}' + E - a \mathcal{Y}_0$ but gives rise to a \textit{twisted embedding}
\begin{equation*}
    i_\beta : H^0(\mathcal{Y},k\mathcal{M}) \to H^0(\mathcal{X},k\mathcal{L}).
\end{equation*}

\begin{lemma}
    We have $i_\beta = h_\beta \circ i$, where $h_\beta : H^0(\mathcal{X},k\mathcal{L}) \to H^0(\mathcal{X},k\mathcal{L})$ is the automorphism defined by
    \begin{equation*}
        h_\beta(s) =  \sum_{\mu \in \Z} t^{-\mu} s_\mu,
    \end{equation*}
    where we wrote $s = \sum_{\mu \in \Z} s_\mu$ with respect to the weight-space decomposition.
\end{lemma}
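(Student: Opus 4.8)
The plan is to verify the identity on the generic fiber, where both $i$ and $i_\beta$ reduce to the isomorphism $H^0(\mathcal{Y}_K,k\mathcal{M}_K)\xrightarrow{\sim}H^0(\mathcal{X}_K,k\mathcal{L}_K)$ --- for $i$ via $\mu_K$, for $i_\beta$ via $\mu_{\beta,K}$ --- and the two differ exactly by the linearized action of the uniformizer $t$. Over $\Spec R$ the equality then follows by torsion-freeness.

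First I would record the generic-fiber picture. Since $E$ and $\mathcal{Y}_0$ are supported on the special fiber, one has $\mathcal{M}_K=\mu_K^*\mathcal{L}_K=\mu_{\beta,K}^*\mathcal{L}_K$, and the tautological inclusion $\mathcal{O}_{\mathcal{Y}}(k\mathcal{M})\hookrightarrow\mathcal{O}_{\mathcal{Y}}(k(\mathcal{M}+E))$ restricts to an equality over $\mathcal{Y}_K$. Hence, after using the $\G_{m,R}$-linearization of $\mathcal{L}$ to identify $\mu_K^*\mathcal{L}_K$ and $\mu_{\beta,K}^*\mathcal{L}_K$ with $\mathcal{L}_K$, the maps $i$ and $i_\beta$ are the restrictions to the $R$-lattice $H^0(\mathcal{Y},k\mathcal{M})$ of the $K$-linear isomorphisms $i_K=(\mu_K^*)^{-1}$ and $i_{\beta,K}=(\mu_{\beta,K}^*)^{-1}$ onto $H^0(\mathcal{X}_K,k\mathcal{L}_K)$. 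As $H^0(\mathcal{Y},k\mathcal{M})$ and $H^0(\mathcal{X},k\mathcal{L})$ are torsion-free over $R$, it suffices to prove $i_{\beta,K}=h_\beta\circ i_K$.

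Next I would identify the automorphism interchanging the two resolutions. Set $b_\beta=\varphi^{-1}\circ\beta(t)\circ\varphi$, the $K$-automorphism of $\mathcal{X}_K$; by $\G_{m,R}$-equivariance of $\varphi$ this is exactly the automorphism $a(t,-)$ induced by the $K$-point $t\in\G_{m,K}(K)$ of the torus action on $\mathcal{X}$. From the definition of $\psi_\beta$ one has $\psi_{\beta,K}=b_\beta\circ\psi_K$, where $\psi\colon\mathcal{X}'\dashrightarrow\mathcal{X}$ is the untwisted birational map; since both resolutions use the same $\mu'$, restricting to generic fibers gives $\mu_K=\psi_K\circ\mu'_K$ and $\mu_{\beta,K}=\psi_{\beta,K}\circ\mu'_K$, whence $\mu_{\beta,K}=b_\beta\circ\mu_K$ and $\mu_{\beta,K}^*=\mu_K^*\circ b_\beta^*$. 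Therefore $i_{\beta,K}=(b_\beta^*)^{-1}\circ i_K$, and it remains to match $(b_\beta^*)^{-1}$ with $h_\beta$: reading the linearized $t$-action off the weight-space decomposition $H^0(\mathcal{X}_K,k\mathcal{L}_K)=\bigoplus_\mu H^0(\mathcal{X}_K,k\mathcal{L}_K)_\mu$ shows that $(b_\beta^*)^{-1}$ scales the weight-$\mu$ piece by $t^{-\mu}$, which is the definition of $h_\beta$. This yields $i_\beta=h_\beta\circ i$; in particular, as $i_\beta$ takes values in $H^0(\mathcal{X},k\mathcal{L})$, the identity also shows that $h_\beta$ --- a priori only $K$-linear on $H^0(\mathcal{X}_K,k\mathcal{L}_K)$ --- maps $i(H^0(\mathcal{Y},k\mathcal{M}))$ into $H^0(\mathcal{X},k\mathcal{L})$.

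The geometric input is just the single equality $\mu_{\beta,K}=b_\beta\circ\mu_K$; I expect the only real difficulty to be bookkeeping, and in particular two convention points. One must choose the identifications $\mu^*\mathcal{L}\cong\mathcal{M}+E\cong\mu_\beta^*\mathcal{L}$ compatibly over the generic fiber (via the linearization), so that $i$ and $i_\beta$ are built from the same divisorial data on $\mathcal{Y}$; and one must fix the sign in the weight-space decomposition so that $h_\beta$ --- with $t^{-\mu}$ rather than $t^{\mu}$ --- really is $(b_\beta^*)^{-1}$, consistently with the conventions used for twisted models and twisted filtrations earlier in the paper. Once these are pinned down, the argument above is routine.
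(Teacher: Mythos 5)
Your proposal is correct and follows essentially the same route as the paper: the whole content is the generic-fiber identity $\mu_{\beta,K}=b_\beta\circ\mu_K$ (the paper's $\mu_\beta=\phi_\beta\circ\mu$ over $\Spec K$) together with the observation that $h_\beta$ restricted to $H^0(\mathcal{X}_K,k\mathcal{L}_K)$ is pullback by $\varphi_\beta^{-1}$, acting by $t^{-\mu}$ on the weight-$\mu$ piece. Your additional remarks on the $R$-lattice, torsion-freeness, and the fact that $h_\beta$ is a priori only $K$-linear simply make explicit bookkeeping that the paper leaves implicit.
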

\begin{proof}
    Over $\Spec K$, we have $\mu_\beta = \phi_\beta \circ \mu$, where
    \begin{equation*}
        \phi_\beta : \mathcal{X}_K \simeq X \xrightarrow{\beta(t)} X \simeq \mathcal{X}_K.
    \end{equation*}
    This implies that $i(s) = \varphi^*_\beta i_\beta(s)$ pointwise on $\mathcal{X}_K$ for all sections $s$ in $H^0(\mathcal{Y},k\mathcal{M})$.
    The lemma then follows from the fact that the restriction of the automorphism $h_\beta$ to $H^0(\mathcal{X}_K,k\mathcal{L}_K)$ is induced by $\varphi_\beta^{-1}$.
\end{proof}
\begin{proposition}\label{prop : twisting hattoris filtratoin}
    The twist of $\mathcal{F} = \mathcal{F}_{(\mathcal{X},\mathcal{L}),(\mathcal{X}',\mathcal{L}')}$ by $\beta_0$ satisfies
    \begin{equation*}
        \mathcal{F}^\lambda_{\beta_0} H^0(\mathcal{X}_0,k\mathcal{L}_0)
        = \{ s|_{\mathcal{X}_0} \;|\; s \in H^0(\mathcal{X},k\mathcal{L}) \text{ and } t^{-\lambda}s \in i_\beta(H^0(\mathcal{Y},k\mathcal{M})) \}.
    \end{equation*}
    In particular, the twisted filtration $\mathcal{F}_{\beta_0}$ coincides with the good filtration associated to the twisted model $(\mathcal{X}_\beta,\mathcal{L}_\beta)$ and $(\mathcal{X}' ,\mathcal{L}' )$.
\end{proposition}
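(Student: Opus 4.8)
The plan is to prove the displayed identity for the filtration; the ``in particular'' clause then follows at once, since the same scheme $\mathcal{Y}$ equipped with the morphisms $\mu_\beta$ and $\mu'$ is a resolution of indeterminacies of $\psi_\beta\colon\mathcal{X}'\dashrightarrow\mathcal{X}_\beta$ over $\Spec R$, and $\mu_\beta^*\mathcal{L}_\beta=\mu_\beta^*\mathcal{L}=\mu'^*\mathcal{L}'+E-a\mathcal{Y}_0$ with the same effective divisor $E$ and the same $a$, so the embedding of sections associated with the pair $(\mathcal{X}_\beta,\mathcal{L}_\beta)$, $(\mathcal{X}',\mathcal{L}')$ is precisely $i_\beta$; by Definition \ref{def : definition of Hattoris good filtration} the right-hand side of the displayed formula is then exactly $\mathcal{F}_{(\mathcal{X}_\beta,\mathcal{L}_\beta),(\mathcal{X}',\mathcal{L}')}$.

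To prove the identity, write $\mathcal{G}^\lambda$ for the right-hand side and argue weight-space by weight-space for the residual $\C^*$-action $\beta_0$. The two ingredients are the weight-piece formula for twists, $(\mathcal{F}^\lambda_{\beta_0}H^0(\mathcal{X}_0,k\mathcal{L}_0))_\mu=(\mathcal{F}^{\lambda-\mu}H^0(\mathcal{X}_0,k\mathcal{L}_0))_\mu$ (valid because $\mathcal{F}$ is $\beta_0$-invariant by Lemma \ref{lem : sq weight of hattoris good filtration}), and the identity $i_\beta=h_\beta\circ i$ from the preceding lemma, in which $h_\beta$ is multiplication by $t^{-\mu}$ on the weight-$\mu$ space. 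Since $i$ is $\C^*$-equivariant and $h_\beta$ is scalar on each weight space, $i_\beta$ is $\C^*$-equivariant, so $i_\beta(H^0(\mathcal{Y},k\mathcal{M}))$ is $\C^*$-invariant and both filtrations decompose into $\beta_0$-weight pieces. For $\subseteq$: a class in $\mathcal{F}^\lambda_{\beta_0}H^0(\mathcal{X}_0,k\mathcal{L}_0)$ is a sum of weight-$\mu$ classes, each of the form $s|_{\mathcal{X}_0}$ with $s\in H^0(\mathcal{X},k\mathcal{L})_\mu$ and $t^{-(\lambda-\mu)}s\in i(H^0(\mathcal{Y},k\mathcal{M}))$ (using $\C^*$-invariance of the set defining $\mathcal{F}^{\lambda-\mu}$); the element $t^{-(\lambda-\mu)}s$ has weight $\mu$, so $t^{-\lambda}s=h_\beta\big(t^{-(\lambda-\mu)}s\big)\in h_\beta\big(i(H^0(\mathcal{Y},k\mathcal{M}))\big)=i_\beta(H^0(\mathcal{Y},k\mathcal{M}))$, whence $s|_{\mathcal{X}_0}\in\mathcal{G}^\lambda$ and, summing, the whole class lies in $\mathcal{G}^\lambda$. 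For $\supseteq$: given $s\in H^0(\mathcal{X},k\mathcal{L})$ with $t^{-\lambda}s\in i_\beta(H^0(\mathcal{Y},k\mathcal{M}))$, decompose $s=\sum_\mu s_\mu$; by $\C^*$-invariance $t^{-\lambda}s_\mu\in i_\beta(H^0(\mathcal{Y},k\mathcal{M}))$, and applying $h_\beta^{-1}$ (multiplication by $t^\mu$ in weight $\mu$) gives $t^{-(\lambda-\mu)}s_\mu\in i(H^0(\mathcal{Y},k\mathcal{M}))$, so $s_\mu|_{\mathcal{X}_0}\in(\mathcal{F}^{\lambda-\mu}H^0(\mathcal{X}_0,k\mathcal{L}_0))_\mu=(\mathcal{F}^\lambda_{\beta_0}H^0(\mathcal{X}_0,k\mathcal{L}_0))_\mu$; summing over $\mu$ finishes the inclusion.

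I do not expect a genuine obstacle: the statement is the compatibility of two different $t$-power bookkeepings — twisting a filtration versus twisting a model — and the proof amounts to tracking $\beta_0$-weights. The only points requiring care are that the $\G_{m,R}$-equivariant resolution makes both $i$ and $i_\beta$ $\C^*$-equivariant (so that all the subspaces in sight decompose into weight pieces and restriction to $\mathcal{X}_0$ respects them), and that the sign conventions in $h_\beta$ and in the formula $(\mathcal{F}^\lambda_\beta S_k)_\mu=(\mathcal{F}^{\lambda-\mu}S_k)_\mu$ are consistent, which is exactly what makes the exponents $-(\lambda-\mu)$ and $-\mu$ add up to $-\lambda$.
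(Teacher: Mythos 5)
Your proof is correct and follows essentially the same route as the paper: both arguments decompose into $\beta_0$-weight spaces, use the twist formula $(\mathcal{F}^\lambda_{\beta_0})_\mu=(\mathcal{F}^{\lambda-\mu})_\mu$ together with $i_\beta=h_\beta\circ i$, and reassemble, the only difference being that you phrase the computation as a double inclusion while the paper writes it as a chain of equalities. Your explicit justification of the ``in particular'' clause via the twisted resolution of indeterminacies is exactly what the paper's surrounding discussion of $\mu_\beta$ and $i_\beta$ provides implicitly.
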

\begin{proof}
    Writing subscripts for the weight-space decomposition components, a direct computation shows that
    \begin{align*}
        \mathcal{F}^\lambda_{\beta_0} H^0(\mathcal{X},kr\mathcal{L})
        &= \bigoplus_{\mu \in \Z} \{ s|_{\mathcal{X}_0} \;|\; 
        s \in H^0(k\mathcal{L}) \text{ and } t^{\mu-\lambda}s \in i(H^0(kr\mathcal{M})) \}_\mu,
        \\
        &= \bigoplus_{\mu \in \Z} \{ s|_{\mathcal{X}_0} \;|\; 
        s \in H^0(k\mathcal{L})_\mu \text{ and } t^{\mu-\lambda} t^{-\mu} s \in i_\beta(H^0(kr\mathcal{M})) \},
        \\
        &= \bigoplus_{\mu \in \Z} \{ s|_{\mathcal{X}_0} \;|\; 
        s \in H^0(k\mathcal{L}) \text{ and } t^{-\lambda} s \in i_\beta(H^0(kr\mathcal{M})) \}_\mu,
        \\
        &= \{ s|_{\mathcal{X}_0} \;|\; 
        s \in H^0(k\mathcal{L}) \text{ and } t^{-\lambda} s \in i_\beta(H^0(kr\mathcal{M})) \},
    \end{align*}
    as desired.
\end{proof}
Lastly, we characterize positivity of the reduced norm of a good filtration induced by two equivariant models.
Let $T$ be a torus in $\mathrm{Aut}_K(X,L)$.
By Lemma \ref{lem : sq weight of hattoris good filtration}, if $(\mathcal{X},\mathcal{L})$ and $(\mathcal{X}',\mathcal{L}')$ are $T$-equivariant, then the good filtration $\mathcal{F} = \mathcal{F}_{(\mathcal{X},\mathcal{L}),(\mathcal{X}',\mathcal{L}')}$ is $T_0$-invariant, where $T_0$ in $\mathrm{Aut}(\mathcal{X}_0,\mathcal{L}_0)$ is the restriction of the torus $T_R$ in $\mathrm{Aut}_R(\mathcal{X},\mathcal{L})$ to the special fiber.
\begin{proposition}\label{prop : red norm of hattoris filtration}
    We have $||\mathcal{F}||_{T_0} > 0$ if and only if $(\mathcal{X}_\beta,\mathcal{L}_\beta) \not \cong (\mathcal{X}',\mathcal{L}')$ for all $\beta$ one-parameter subgroups $\beta$ of $T$.
\end{proposition}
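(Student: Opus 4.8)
The plan is to read the statement off from the structural results already established --- Proposition \ref{prop : twisting hattoris filtratoin}, Theorem \ref{thm : hattoris good filtration properties}, Lemma \ref{lem : weight function of twisted filtration} and Lemma \ref{lem : red norm is minimum}. The key input is the chain of identities, valid for every one-parameter subgroup $\beta$ of $T$ (and, read appropriately, for rational ones),
\begin{equation*}
    ||\mathcal{F}||^2 + 2\langle \mathcal{F}, \beta_0 \rangle + ||\beta_0||^2
    = ||\mathcal{F}_{\beta_0}||^2
    = ||\mathcal{F}_{(\mathcal{X}_\beta, \mathcal{L}_\beta),(\mathcal{X}', \mathcal{L}')}||^2,
\end{equation*}
where the first equality is Lemma \ref{lem : weight function of twisted filtration} and the second is Proposition \ref{prop : twisting hattoris filtratoin}; by Theorem \ref{thm : hattoris good filtration properties} the right-hand side is zero exactly when $(\mathcal{X}_\beta,\mathcal{L}_\beta) \cong (\mathcal{X}',\mathcal{L}')$ and is strictly positive otherwise. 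For a rational $\beta = r^{-1}\gamma$ one reads the same chain with $\mathcal{F}_{\beta_0}$ regarded as an $\R$-filtration in the sense of \cite[Definition 2.28]{li2022g} and $(\mathcal{X}_\beta,\mathcal{L}_\beta)$ as in Section \ref{subsubsec: twisted and equivariant models} (with the rescaled polarization); this rational version is the one actually needed.

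With this in hand one implication is immediate: if $(\mathcal{X}_\beta,\mathcal{L}_\beta) \cong (\mathcal{X}',\mathcal{L}')$ for some $\beta$, then the displayed quantity --- which is exactly the function of $\xi = \beta_0 \in N_\R(T_0)$ whose infimum defines $||\mathcal{F}||_{T_0}^2$ --- vanishes, so $||\mathcal{F}||_{T_0} = 0$. For the converse, assume $||\mathcal{F}||_{T_0} = 0$. By Lemma \ref{lem : red norm is minimum} the infimum is attained at $\xi(\mathcal{F}) = \sum_i \frac{\langle\mathcal{F},\beta_i\rangle}{\langle\beta_i,\beta_i\rangle}\beta_i$, so that $||\mathcal{F}||^2 - 2\langle\mathcal{F},\xi(\mathcal{F})\rangle + ||\xi(\mathcal{F})||^2 = 0$; applying the chain of identities with $\beta_0 = -\xi(\mathcal{F})$ shows that the good filtration of $(\mathcal{X}_{-\xi(\mathcal{F})},\mathcal{L}_{-\xi(\mathcal{F})})$ and $(\mathcal{X}',\mathcal{L}')$ has vanishing norm, whence those two models are isomorphic --- provided $\xi(\mathcal{F})$ is a rational one-parameter subgroup of $T$.

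The crux is therefore the rationality of $\xi(\mathcal{F})$, i.e. that $\langle\mathcal{F},\beta_i\rangle$ and $\langle\beta_i,\beta_i\rangle$ are rational: here $a_0$ is a coefficient of a Hilbert polynomial; $b_0$ is rational because $\mathcal{F}$ is good; $b_0'$ is rational because $w_{\beta_i}(k)$ is, for $k \gg 0$, the weight polynomial of the $\C^*$-action $\beta_i$ on $H^0(\mathcal{X}_0,k\mathcal{L}_0)$, which has rational coefficients by equivariant Riemann--Roch; and $c_0'$ is rational because, by Lemma \ref{lem : sq weight of hattoris good filtration}, $w^2_{\mathcal{F},\beta_i}(k)$ equals, up to sign, the total weight of the $\C^*$-action on $H^0(\mathcal{X},k\mathcal{L})/i(H^0(\mathcal{Y},k\mathcal{M}))$ for $k \gg 0$, a difference of equivariant Euler characteristics and hence eventually a polynomial in $k$ with rational coefficients. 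Thus $\xi(\mathcal{F}) \in N_\Z(T_0) \otimes \Q$, and the argument closes. I expect this rationality bookkeeping --- together with the routine but not entirely trivial task of making Proposition \ref{prop : twisting hattoris filtratoin} and Theorem \ref{thm : hattoris good filtration properties} meaningful for rational twists, i.e. for $\R$-filtrations and rescaled polarizations --- to be the only real obstacle; everything else is formal.
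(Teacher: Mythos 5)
Your argument is essentially the paper's: both proofs combine Lemma \ref{lem : red norm is minimum} (the infimum is attained at the rational vector $\xi(\mathcal{F})$), Proposition \ref{prop : twisting hattoris filtratoin} (the twisted filtration is the good filtration of the twisted model), and Theorem \ref{thm : hattoris good filtration properties} (positivity of the norm detects non-isomorphism), with the twist by a rational class handled by viewing $\mathcal{F}_\xi$ as a $\Z$-filtration of $(\mathcal{X}_0,r\mathcal{L}_0)$ for divisible $r$ --- exactly your ``rescaled polarization'' remark. Your rationality discussion is in fact more explicit than the paper's, which simply asserts that $\langle\mathcal{F},\beta_i\rangle\in\Q$ because $\mathcal{F}$ is good; spelling out that $c_0'$ is rational via Lemma \ref{lem : sq weight of hattoris good filtration} is the right way to substantiate that claim. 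The one step you leave open, and which the statement genuinely requires, is the passage in the converse direction from ``$(\mathcal{X}_{-\xi(\mathcal{F})},\mathcal{L}_{-\xi(\mathcal{F})})\cong(\mathcal{X}',\mathcal{L}')$ for the \emph{rational} class $-\xi(\mathcal{F})$'' to the existence of an \emph{integral} one-parameter subgroup $\beta$ of $T$ with $(\mathcal{X}_\beta,\mathcal{L}_\beta)\cong(\mathcal{X}',\mathcal{L}')$, since the proposition quantifies over $N_\Z(T)$, not $N_\Q(T)$. The paper closes this by observing that one may scale: writing $-\xi(\mathcal{F})=r^{-1}\beta$ and recalling how rational twists are defined in Section \ref{subsubsec: twisted and equivariant models}, an isomorphism with the rational twist rescales to one involving the integral twist by $\beta$, so the two non-isomorphism conditions (over $N_\Q(T)$ and over $N_\Z(T)$) are equivalent. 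This is a small, fixable omission rather than a flaw in the approach, but as written your converse proves a slightly weaker statement than the one claimed.
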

\begin{proof}
    From Lemma \ref{lem : red norm is minimum}, we know that the infimum over $N_\R(T)$ in the definition of the reduced norm is attained by $\xi(\mathcal{F}) = \sum_{i = 1}^d \langle \mathcal{F},\beta_i \rangle/\langle \beta_i, \beta_i \rangle \beta_i$ where $(\beta_i)$ is any $\Z$-basis of $N_\Z(T)$.
    As $\mathcal{F}$ is good, the inner product $\langle \mathcal{F},\beta_i \rangle$ is a rational number, so $\xi(\mathcal{F})$ lies in $N_\Q(T)$.
    We obtain
    \begin{equation*}
        ||\mathcal{F}||_{T_0}
        = \min_{\xi \in N_\Q(T)}
        ||\mathcal{F}_\xi||,
    \end{equation*}
    where the twist $\mathcal{F}_\xi$ may be viewed as a $\Z$-filtration of $(\mathcal{X}_0,r\mathcal{L}_0)$ for a sufficiently divisible positive integer $r$.
    Applying Theorem \ref{thm : hattoris good filtration properties}, we obtain $||\mathcal{F}||_{T_0} > 0$ if and only if $(\mathcal{X}_\xi,\mathcal{L}_\xi) \not\cong (\mathcal{X}',\mathcal{L}')$ for all rational twists $\xi$ in $N_\Q(T)$.
    But this is equivalent to $(\mathcal{X}_\beta,\mathcal{L}_\beta) \not \cong (\mathcal{X}',\mathcal{L}')$ for all $\beta$ in $N_\Z(T)$ as we may simply scale a given automorphism.
\end{proof}

\subsection{Proof of the main results}

We start by restating the first main result, Theorem \ref{thm : main theorem}.
Let $T$ be a maximal torus in $\mathrm{Aut}_K(X,L)$.
\begin{theorem}
    Let $(\mathcal{X},\mathcal{L})$ and $(\mathcal{X}',\mathcal{L}')$ be $T$-equivariant models of $(X,L)$ such that the restricted torus $T_0$ in $\mathrm{Aut}(\mathcal{X}_0, \mathcal{L}_0)$ is maximal. If $(\mathcal{X}_0, \mathcal{L}_0)$ smooth and extremal, then there exists a rational one-parameter subgroup $\xi$ of $T$, such that
    \begin{equation*}
        \mathrm{CM}(\mathcal{X}_\xi,\mathcal{L}_\xi)
        \leq 
        \mathrm{CM}(\mathcal{X}',\mathcal{L}'),
    \end{equation*}
    where the inequality is strict if and only if $(\mathcal{X}_\beta,\mathcal{L}_\beta) \not \cong (\mathcal{X}',\mathcal{L}')$ for all one-parameter subgroups $\beta$ of $T$.
\end{theorem}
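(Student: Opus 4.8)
The plan is to assemble the two steps from Section \ref{subsec : structure of the paper}: produce Hattori's good filtration from the two models, identify its relative Donaldson--Futaki invariant with a difference of CM degrees of a suitable twist, and then apply Theorem \ref{thm : realtive good filtration K stab of extr}. First I would reduce to the case where $L$, $\mathcal{L}$ and $\mathcal{L}'$ are genuine line bundles by replacing $\mathcal{L}$, $\mathcal{L}'$ with a sufficiently divisible multiple; this changes neither side of the claimed inequality, since the CM line bundle is scale-invariant, and it only rescales the filtration-theoretic quantities below. With this done, choose a uniformizer $t$ in $R$ and a $T_R$-equivariant resolution of indeterminacies $\mathcal{X}' \xleftarrow{\mu'} \mathcal{Y} \xrightarrow{\mu} \mathcal{X}$ of the canonical birational map, and form the good filtration $\mathcal{F} = \mathcal{F}_{(\mathcal{X},\mathcal{L}),(\mathcal{X}',\mathcal{L}')}$ of $(\mathcal{X}_0,\mathcal{L}_0)$ of Definition \ref{def : definition of Hattoris good filtration}. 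By Theorem \ref{thm : hattoris good filtration properties} it is good, and by the discussion preceding Proposition \ref{prop : red norm of hattoris filtration} it is $T_0$-invariant, where $T_0 = T_R|_{\mathcal{X}_0}$.

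Next I would pin down the one-parameter subgroup $\xi$. Under the canonical identification $N_\Z(T) \cong N_\Z(T_R) \cong N_\Z(T_0)$ afforded by the $R$-torus $T_R$, let $\xi \in N_\Q(T)$ correspond to $-\xi(\mathcal{F}) = -\sum_{i=1}^d \frac{\langle\mathcal{F},\beta_i\rangle}{\langle\beta_i,\beta_i\rangle}\beta_i \in N_\Q(T_0)$, for $(\beta_i)$ a $\Z$-basis of $N_\Z(T_0)$; this is rational because $\mathcal{F}$ is good, and it is the subgroup of Remark \ref{rmk : xi F is explicit} (possibly after a further harmless rescaling of $\mathcal{L}_0$ to make $\xi_0$ integral). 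By Proposition \ref{prop : twisting hattoris filtratoin}, the twisted filtration $\mathcal{F}_{\xi_0}$ is exactly the good filtration associated to the twisted model $(\mathcal{X}_\xi,\mathcal{L}_\xi)$ and $(\mathcal{X}',\mathcal{L}')$, so Equation \eqref{eq : DF of Hattoris filtratoin = CM - CM} gives $\mathrm{DF}(\mathcal{F}_{\xi_0}) = \mathrm{CM}(\mathcal{X}',\mathcal{L}') - \mathrm{CM}(\mathcal{X}_\xi,\mathcal{L}_\xi)$. On the other hand, by Lemma \ref{lem : DF of twist} extended $\Q$-linearly together with bilinearity of $\langle\cdot,\cdot\rangle$, one has $\mathrm{DF}(\mathcal{F}_{\xi_0}) = \mathrm{DF}(\mathcal{F}) + \mathrm{Fut}(\xi_0) = \mathrm{DF}(\mathcal{F}) - \sum_{i=1}^d \frac{\langle\mathcal{F},\beta_i\rangle}{\langle\beta_i,\beta_i\rangle}\mathrm{Fut}(\beta_i) = \mathrm{DF}_{T_0}(\mathcal{F})$. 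Combining these two computations yields the key identity $\mathrm{DF}_{T_0}(\mathcal{F}) = \mathrm{CM}(\mathcal{X}',\mathcal{L}') - \mathrm{CM}(\mathcal{X}_\xi,\mathcal{L}_\xi)$, which is Equation \eqref{eq : step 1}.

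Finally I would invoke Theorem \ref{thm : realtive good filtration K stab of extr}: since $(\mathcal{X}_0,\mathcal{L}_0)$ is smooth and extremal and $T_0$ is a maximal torus in $\mathrm{Aut}(\mathcal{X}_0,\mathcal{L}_0)$, the $T_0$-invariant good filtration $\mathcal{F}$ satisfies $\mathrm{DF}_{T_0}(\mathcal{F}) \geq 0$, whence $\mathrm{CM}(\mathcal{X}_\xi,\mathcal{L}_\xi) \leq \mathrm{CM}(\mathcal{X}',\mathcal{L}')$. Moreover that theorem gives strict inequality precisely when $||\mathcal{F}||_{T_0} > 0$, and by Proposition \ref{prop : red norm of hattoris filtration} this holds precisely when $(\mathcal{X}_\beta,\mathcal{L}_\beta) \not\cong (\mathcal{X}',\mathcal{L}')$ for every one-parameter subgroup $\beta$ of $T$; conversely, if $||\mathcal{F}||_{T_0} = 0$ then $\mathrm{DF}_{T_0}(\mathcal{F}) = 0$, so equality holds. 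This gives the stated equivalence and completes the proof.

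The content that is genuinely delicate here is not analytic but bookkeeping: getting the sign and normalization right when passing from the $L^2$-projection $\xi(\mathcal{F})$ of $\mathcal{F}$ onto $N_\R(T_0)$ to the twist $\xi$ of $T$, and checking that the reductions to genuine line bundles and to integral one-parameter subgroups (needed to apply Theorem \ref{thm : hattoris good filtration properties} and Proposition \ref{prop : twisting hattoris filtratoin}) leave both sides of the inequality unchanged; all the intersection-theoretic and stability input has already been absorbed into Theorems \ref{thm : difference of CM degrees}, \ref{thm : realtive good filtration K stab of extr} and \ref{thm : hattoris good filtration properties} and Propositions \ref{prop : twisting hattoris filtratoin} and \ref{prop : red norm of hattoris filtration}.
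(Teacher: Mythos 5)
Your proposal is correct and follows essentially the same route as the paper: reduce to genuine line bundles, form Hattori's $T_0$-invariant good filtration, identify $\mathrm{DF}_{T_0}(\mathcal{F})=\mathrm{DF}(\mathcal{F}_{\xi})=\mathrm{CM}(\mathcal{X}',\mathcal{L}')-\mathrm{CM}(\mathcal{X}_\xi,\mathcal{L}_\xi)$ for $\xi=-\xi(\mathcal{F})$ via Proposition \ref{prop : twisting hattoris filtratoin} and Equation \eqref{eq : DF of Hattoris filtratoin = CM - CM}, and conclude with Theorem \ref{thm : realtive good filtration K stab of extr} and Proposition \ref{prop : red norm of hattoris filtration}. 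The only difference is expository: you make explicit (via Lemma \ref{lem : DF of twist} and bilinearity) the identity $\mathrm{DF}(\mathcal{F}_{\xi})=\mathrm{DF}_{T_0}(\mathcal{F})$ that the paper leaves implicit.
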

\begin{proof}
    By invariance of the difference of CM degrees and the Donaldson--Futaki invariant under scaling, we may assume that $L$, $\mathcal{L}$ and $\mathcal{L}$ are all line bundles.
    The good filtration $\mathcal{F} = \mathcal{F}_{(\mathcal{X},\mathcal{L}),(\mathcal{X}',\mathcal{L}')}$ is $T_0$-invariant and, following Proposition \ref{prop : twisting hattoris filtratoin} and Equation \eqref{eq : DF of Hattoris filtratoin = CM - CM}, we have
    \begin{equation*}
        \mathrm{DF}_{T_0}(\mathcal{F})
        = \mathrm{DF}(\mathcal{F}_\xi)
        = \mathrm{CM}(\mathcal{X}',\mathcal{L}')
        - \mathrm{CM}(\mathcal{X}_\xi,\mathcal{L}_\xi),
    \end{equation*}
    for $\xi = - \sum_{i = 1}^d \langle \mathcal{F}, \beta_i \rangle / \langle \beta_i, \beta_i \rangle \beta_i$, where $(\beta_i)$ is any of $\Z$-basis of $N_\Z(T)$.
    As $(\mathcal{X}_0,\mathcal{L}_0)$ is extremal, we have $\mathrm{DF}_{T_0}(\mathcal{F}) \geq 0$ with strict inequality if and only if $||\mathcal{F}||_{T_0} > 0$ by Theorem \ref{thm : realtive good filtration K stab of extr}.
    The result follows by Proposition \ref{prop : red norm of hattoris filtration}.
\end{proof}
\begin{remark}\label{rmk : xi F is explicit}
    The proof reveals that we may explicitly give the rational one-parameter subgroup $\xi$ as $-\xi(\mathcal{F}) = -\sum_{i = 1}^d \langle \mathcal{F},\beta_i \rangle/\langle \beta_i, \beta_i \rangle \beta_i$ from Lemma \ref{lem : red norm is minimum}. Geometrically, it may be viewed as the orthogonal projection of the filtration $\mathcal{F}$ to $N_\Q(T)$ whose coefficients may be expressed geometrically in terms of Hilbert- and weight polynomials of the models, following Theorem \eqref{thm : hattoris good filtration properties} and Lemma \ref{lem : sq weight of hattoris good filtration}.
\end{remark}

By replacing $\mathrm{DF}_{T_0}(\mathcal{F})$ by $\mathrm{DF}(\mathcal{F})$ and applying Corollary \eqref{cor : good filtration k polystab of csck} with Theorem \ref{thm : hattoris good filtration properties} (which uses the vanishing of the Futaki invariant for cscK manifolds \cite{futaki1983obstruction}), we obtain Corollary \ref{cor : main corollary} for cscK metrics.





\printbibliography

\end{document}